\theoremstyle{plain}
\newtheorem{thm}{Theorem}[section]
\newtheorem{cor}[thm]{Corollary}
\newtheorem{prop}[thm]{Proposition}
\newtheorem{lem}[thm]{Lemma}
\newtheorem{definition}[thm]{Definition}
\title[Quadratic differentials]{Quadratic differentials and foliations on infinite Riemann surfaces}
\thanks{The author was partially supported by the Simons Foundation Collaboration Grant, (346391) and by PSCCUNY grants (61582 and 63477).}
\author{Dragomir \v Sari\' c}
\begin{document}

\subjclass[2010]{30F20, 30F25, 30F45, 57K20}

\begin{abstract} 
We prove that an infinite Riemann surface $X$ is parabolic ($X\in O_G$) if and only if the union of the horizontal trajectories of any integrable holomorphic quadratic differential that are cross-cuts is of zero measure. Then we establish the density of the Jenkins-Strebel differentials in the space of all integrable quadratic differentials when $X\in O_G$ and extend Kerckhoff's formula for the Teichm\"uller metric in this case. Our methods depend on extending to infinite surfaces the Hubbard-Masur theorem describing which measured foliations can be realized by horizontal trajectories of integrable holomorphic quadratic differentials.
 \end{abstract}

\maketitle

\section{Introduction}

A Riemann surface $X$ is called {\it infinite} if its fundamental group is not finitely generated. A  classification of infinite Riemann surfaces from the point of view of the function theory is into parabolic and non-parabolic surfaces \cite{AhlforsSario}. A Riemann surface $X$ is {\it parabolic}, in notation $X\in O_G$, if it does not have a Green's function, i.e., a harmonic function with a logarithmic singularity at a single point that is $0$ at infinity. It turns out that $X\in O_G$ is equivalent to the Brownian motion being recurrent (see Ahlfors-Sario \cite{AhlforsSario}), which is also equivalent to the (hyperbolic) geodesic flow on the unit tangent bundle of $X$ being ergodic (see Tsuji \cite{Tsuji}, Sullivan \cite{Sullivan} and Nicholls \cite{Nicholls1}) which is equivalent to the divergence of its Poincar\'e series (see  \cite{Nicholls1}) which is equivalent to the covering Fuchsian group having Bowen's property (see Astala-Zinsmeister \cite{Astala-Zinsmeister} and Bishop \cite{Bishop}). 

It is common to associate a holomorphic quadratic differential with a Riemann surface. When $X$ is an infinite Riemann surface, the space of integrable holomorphic quadratic differentials $A(X)$ plays an important role. Reich and Strebel \cite{ReichStrebel} proved that affine stretching in the natural parameter of an integrable holomorphic quadratic differential on $X$ produces a map with the smallest quasiconformal constant in its homotopy class (which generalizes the original result of Teichm\"uller for closed surfaces). Even a more direct relationship between $X$ and the structure of the space $A(X)$ is given by Markovic \cite{markovic} when he showed that a linear isometry between $A(X)$ and $A(Y)$ is geometric (i.e., Riemann surfaces $X$ and $Y$ are quasiconformal) thus extending Royden's theorem to all infinite surfaces.

We prove that $X\in O_G$ if and only if, for each $\varphi\in A(X)$, the set of horizontal trajectories of $\varphi$ that are cross-cuts is of zero area. The above condition is required to hold for all $\varphi\in A(X)$, and it is equivalent to the ergodicity of the geodesic flow on the unit tangent bundle of $X$. When $X\in O_G$, we establish that each $\varphi\in A(X)$ is approximated by a sequence of  Jenkins-Strebel quadratic differentials with a single cylinder on $X$ thus extending a result of Masur \cite{Masur}. Further, we establish that for $X\in O_G$, the Teichm\"uller distance between two points is the supremum of one-half of the logarithm of the quotient of extremal lengths of the simple closed curves on the two Riemann surfaces representing two points which is an extension of Kerckhoff's result for closed surfaces \cite{Kerckhoff}. Given the above results, we would like to understand which surfaces are parabolic. 
 In the case of a Cantor tree surface $X_C$, we complement the results of McMullen \cite{mcmullen} and Basmajian, Hakobyan, and the author \cite{BHS} to provide conditions on the lengths of cuffs of a pants decomposition to decide whether $X_C\in O_G$ or  $X_C\notin O_G$. The methods in the paper depend on the ability to construct ``convenient'' integrable holomorphic quadratic differentials, and we establish this by extending the Hubbard-Masur characterization \cite{HubbardMasur}  
of measured foliations that can be realized by quadratic differentials. More details follow.

As mentioned above, many authors (see \cite{Nevanlinna:criterion,AhlforsSario,Nicholls1,Sullivan,Astala-Zinsmeister,Bishop,Fernandez-Melian}) have established equivalent characterizations to the parabolicity of Riemann surfaces. We add one more equivalent property (see Theorem \ref{thm:par-ch-cross-cuts}). 
A subset of $X$ is of {\it zero measure} if it is a countable union of subsets mapped to zero measure sets in the charts. 

\begin{thm}
The Brownian motion on an arbitrary hyperbolic Riemann surface $X$ is recurrent if and only if a.e. horizontal leaf of every finite area holomorphic quadratic differentials on $X$ is recurrent.
\end{thm}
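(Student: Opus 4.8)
The plan is to read the statement through the dictionary supplied by the introduction: ``the Brownian motion on $X$ is recurrent'' means $X\in O_G$, while ``a.e.\ horizontal leaf of $\varphi$ is recurrent'' means that the union of those horizontal trajectories of $\varphi$ that are cross-cuts has zero area. Indeed, the horizontal flow of $\varphi$ preserves the finite measure $|\varphi|$, so on the invariant set where trajectories are defined for all time Poincar\'e recurrence already forces recurrence; the only way recurrence can fail on a set of positive measure is that a positive-area set of trajectories escapes to the ideal boundary in finite $|\varphi|$-length, i.e.\ is a set of cross-cuts (the critical trajectories forming a null set). Thus the theorem is equivalent to the assertion that $X\in O_G$ if and only if, for every $\varphi\in A(X)$, the set of cross-cut trajectories is null, and I would prove the two implications separately.

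For the direction $X\in O_G \Rightarrow$ null cross-cuts I argue by contradiction using extremal length. Suppose some $\varphi\in A(X)$ has a positive-area set of cross-cuts. Passing to the natural coordinate $w=\int\sqrt{\varphi}$, a maximal band of cross-cuts is a Euclidean strip $\{0<\operatorname{Im}w<h\}$ of width $h>0$ equal to the transverse measure, with the trajectories as the horizontal segments; since $\int_0^h \ell(\eta)\,d\eta=\operatorname{area}<\infty$, almost every trajectory has finite length $\ell(\eta)$, and after restricting to a subset $S$ of positive measure I may assume $\ell(\eta)\le L<\infty$ there. Cutting each such trajectory at an interior point produces a family $\Gamma$ of curves joining a fixed compact set to the ideal boundary of $X$, and for any admissible $\rho$ the Cauchy--Schwarz inequality $1\le \bigl(\int_{\gamma_\eta}\rho\bigr)^2\le \ell(\eta)\int_{\gamma_\eta}\rho^2$ integrates to $\int\rho^2\ge \int_S \ell(\eta)^{-1}\,d\eta\ge |S|/L>0$, whence $\operatorname{mod}(\Gamma)>0$. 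This contradicts the classical characterization that $X\in O_G$ if and only if the family of curves joining a compact set to the ideal boundary has zero modulus (infinite extremal length).

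For the converse I prove the contrapositive: if $X\notin O_G$ I must produce some $\varphi\in A(X)$ with a positive-area set of cross-cuts. Let $g$ be the Green's function with pole $p$ and put $\Omega=\{0<g<1\}$. Then $\omega:=2\partial g=dg+i\,d\tilde g$, with $\tilde g$ a local harmonic conjugate, is a holomorphic one-form on $\Omega$ and $\varphi_0:=\omega^2$ a holomorphic quadratic differential whose horizontal trajectories are the gradient lines of $g$; in the natural coordinate $w=g+i\tilde g$ the region $\Omega$ becomes the flat cylinder $\{0<\operatorname{Re}w<1\}$, the trajectories are the horizontal segments, and each runs from the compact level $\{g=1\}$ out to the ideal boundary $\{g=0\}$ in finite length, hence is a cross-cut. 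The co-area formula and the constancy of the flux of $g$ across its level sets give $\int_\Omega|\varphi_0|=\int_\Omega|\nabla g|^2=2\pi$, so $\varphi_0$ is integrable and its cross-cuts have positive area. The remaining task is to upgrade $\varphi_0$, defined only on the subsurface $\Omega$, to a genuine element of $A(X)$ without collapsing the cross-cut band: I would extend the horizontal measured foliation of $\varphi_0$ across the compact complement of $\Omega$ and realize the resulting measured foliation on all of $X$ by an integrable holomorphic quadratic differential using the infinite-surface Hubbard--Masur theorem, then verify that the realizing differential still carries a non-null set of cross-cuts.

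I expect this globalization step to be the main obstacle. The local construction and the extremal-length estimate are routine, and Poincar\'e recurrence disposes of the ``generic'' part of the recurrence statement for free; the real work is producing a \emph{global} integrable quadratic differential witnessing the non-null cross-cut set, which is precisely why the extension of the Hubbard--Masur realization theorem is needed, and care is required to check that passing from the partially defined $\varphi_0$ to its global realization preserves a positive-area band of escaping trajectories rather than diffusing it into the recurrent part.
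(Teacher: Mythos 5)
Your overall architecture coincides with the paper's: translate the statement into ``$X\in O_G$ if and only if the cross-cut trajectories of every $\varphi\in A(X)$ form a set of zero area'' (the paper's Theorem \ref{thm:par-ch-cross-cuts}), prove the parabolic direction by a length--area/modulus estimate, and prove the converse by squaring the differential of a harmonic function witnessing non-parabolicity and then globalizing through the infinite-surface Hubbard--Masur theorem (Theorem \ref{thm:main}). Within that frame you make two substitutions. For the first direction the paper simply invokes Marden--Strebel (Theorem \ref{thm:par-int}); your extremal-length argument is in effect a self-contained proof of that theorem and is sound, provided you first pass to a positive-measure subfamily of cross-cuts all meeting one fixed compact piece of an exhaustion, so that the cut points lie in a common compact set and the half-trajectories genuinely overflow the family of curves from that compact set to the ideal boundary. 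For the converse the paper does not use the Green's function: it uses the harmonic measure $u=\lim u_n$ of the ideal boundary relative to $X\setminus X_0$, which vanishes exactly on the compact curve $\partial X_0$, so every regular trajectory of $(dU)^2$ automatically emanates from a fixed compact set, and integrability follows from Fatou plus a Fubini/flux identity for the approximants $u_n$.

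Two points need attention. First, your Green's function version has a gap as written: the level set $\{g=1\}$ need not be compact. Already for the punctured disk the Green's function extends harmonically across the puncture and has a positive limit there, so $\{g\ge c\}$ accumulates at an ideal boundary point for small $c$. This is repairable---$g$ is bounded on the complement of a compact neighborhood of the pole, so $\{g\ge c_0\}$ is compact for $c_0$ large, and one should work in $\{0<g<c_0\}$---but the choice of level must be justified, and the flux identity $\int_{\{0<g<c_0\}}|\nabla g|^2=2\pi c_0$ then requires a limiting argument over the sublevels. Second, the step you defer (extending the partial foliation across the compact core and checking that the realizing differential still carries a positive-measure set of cross-cuts) is precisely where the paper's technical content lies: in the proof of Theorem \ref{thm:int-non-par-cross-cut} the pair of pants $X_0$ is filled in with explicitly foliated rectangles whose transverse measures match the boundary data $a\le b\le c$ on the three cuffs (with a case split according to $c\le a+b$ or $c>a+b$, and degenerate cases when only one or two cuffs meet the foliation), the completed foliation is checked to be proper and of finite Dirichlet integral, and the realizing $\varphi_u$ inherits cross-cut trajectories because its straightened lamination has only escaping leaves. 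Your plan for this step is the correct one, but it is the heart of the proof rather than a routine verification, and your write-up should not present it as such.
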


A differential $\varphi\in A(X)$ is said to be {\it Jenkins-Strebel} if all non-singular horizontal trajectories are closed, and they make a single cylinder. Masur \cite{Masur} proved that Jenkins-Strebel differentials are dense for compact surfaces for the topology of the pointwise convergence. For infinite Riemann surfaces, the $L^1$-topology is stronger, and we prove 
 (see Corollary \ref{cor:jenkins-dense-L1})

\begin{thm}
Let $X\in O_G$ be an infinite Riemann surface and $\varphi\in A(X)$. Then there exists a sequence $\{\varphi_n\}_n$ of Jenkins-Strebel differentials on $X$ such that
$$
\int_X|\varphi -\varphi_n|\to 0
$$
as $n\to\infty$.
\end{thm}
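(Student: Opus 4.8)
The plan is to transfer the problem from the differential $\varphi$ to its horizontal measured foliation, to approximate that foliation by single-cylinder foliations using the recurrence furnished by parabolicity, and then to realize the approximants as Jenkins-Strebel differentials through the extended Hubbard-Masur theorem, while controlling the error in the $L^1$-norm.

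First I would associate to $\varphi$ its horizontal measured foliation $\mathcal{F}_\varphi$, carrying the transverse measure $|\operatorname{Im}(\sqrt{\varphi}\,dz)|$ and having total area $\int_X|\varphi|$. Since $X\in O_G$, Theorem \ref{thm:par-ch-cross-cuts} shows that the horizontal trajectories of $\varphi$ which are cross-cuts form a set of zero measure, so up to a null set every horizontal leaf is recurrent. This recurrence is the structural substitute for compactness in Masur's argument: it guarantees that $\mathcal{F}_\varphi$ carries no transient part escaping to the ideal boundary, and that the first-return map of the foliation to a short transversal is defined almost everywhere and preserves the transverse measure.

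Next I would construct the single-cylinder approximants. Exhaust $X$ by compact subsurfaces $X_1\subset X_2\subset\cdots$ with $\bigcup_k X_k=X$, and fix a sequence $\epsilon_n\to 0$; for each $n$ choose $m_n$ with $\int_{X\setminus X_{m_n}}|\varphi|<\epsilon_n$, which is possible since $\varphi\in L^1$. Working on $X_{m_n}$ and exploiting the recurrence of almost every leaf, I would run a first-return argument on a transversal to produce a single long closed horizontal trajectory $\gamma_n$ that is $\epsilon_n$-dense for the transverse measure, weighted by a factor $w_n$ chosen so that the cylinder swept out by the parallel copies of $\gamma_n$ has area within $\epsilon_n$ of $\int_{X_{m_n}}|\varphi|$. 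The weighted curve $w_n\gamma_n$ determines a measured foliation $\mathcal{F}_n$ consisting of a single cylinder, with $\mathcal{F}_n\to\mathcal{F}_\varphi$ in the sense that intersection numbers with arcs converge. Applying the extended Hubbard-Masur theorem to each $\mathcal{F}_n$ produces a differential $\varphi_n\in A(X)$ that is Jenkins-Strebel with exactly one cylinder.

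The main obstacle is to upgrade the convergence of foliations to the $L^1$-convergence $\int_X|\varphi-\varphi_n|\to 0$, and I would handle it in two steps. Since the masses $\|\varphi_n\|_1$ are bounded, the mean-value inequality makes $\{\varphi_n\}$ a normal family, so by the continuity of the extended Hubbard-Masur correspondence $\varphi_n\to\varphi$ locally uniformly away from the zeros, hence pointwise almost everywhere; the limit is forced to be $\varphi$ by the injectivity in that theorem, so the full sequence converges. Simultaneously, the area-matching in the construction gives $\int_X|\varphi_n|\to\int_X|\varphi|$. Pointwise almost-everywhere convergence together with convergence of the total masses then forces $\int_X|\varphi-\varphi_n|\to 0$ by a Riesz-Scheffé argument. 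The delicate point, and the place where parabolicity is used a second time, is ruling out escape of mass to the ideal boundary in the limit: the vanishing of the cross-cut measure from Theorem \ref{thm:par-ch-cross-cuts} ensures that no area of $\varphi$ sits on non-recurrent leaves, so the tails $\int_{X\setminus X_{m_n}}|\varphi_n|$ remain uniformly small and no mass is lost.
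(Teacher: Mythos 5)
Your overall architecture --- pass to the horizontal foliation, use parabolicity to truncate with small loss of mass and transverse measure, approximate by a weighted simple closed curve, realize it by the extended Hubbard--Masur theorem, and finish with normal families, the Heights Theorem, and a Scheff\'e-type argument (this last step is exactly Lemma \ref{lem:L^1-conv}) --- matches the paper's proof of Theorem \ref{thm:approx-JS}. However, the middle step, where you produce the approximating curve $\gamma_n$ by ``running a first-return argument on a transversal'' to close up a long recurrent leaf, has a genuine gap. The restriction of the horizontal foliation of $\varphi$ to a finite-area piece $X_{k}$ is in general \emph{not} minimal: by Strebel's structure theory it decomposes into at most countably many maximal cylinders, cross-cut strips, and spiral (minimal) sets, and a single closed-up leaf lives in only one of these components, so its counting measure cannot approximate the transverse measure of a foliation carrying positive mass on several components. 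Even inside one minimal component, without unique ergodicity the normalized counting measure along a long leaf segment need only converge to \emph{some} transverse invariant measure, not the one induced by $\varphi$. The paper avoids this entirely: it straightens the truncated partial foliation $\mathcal{F}_\epsilon$ to a compactly supported measured lamination $\mu_\epsilon$ on $X_k$ and invokes the Penner--Harer density of weighted simple closed geodesics in $ML_0(X_k)$ (a train-track fact requiring no dynamics of the leaves), then applies the classical Hubbard--Masur theorem on the doubled compact surface before returning to $X$ via Theorem \ref{thm:main}.

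A secondary problem is your claim that $w_n$ can be ``chosen so that the cylinder \dots has area within $\epsilon_n$ of $\int_{X_{m_n}}|\varphi|$.'' Once the homotopy class of $\gamma_n$ and the height $w_n$ are fixed, the mass of the one-cylinder differential is forced to be $w_n^2\,\mathrm{ext}(\gamma_n)$; you cannot tune the area independently of the requirement that $w_n\gamma_n$ approximate the foliation, which already pins down $w_n$. The paper instead propagates the one-sided inequality $\int_X|\varphi_n|\leq \int_X|\varphi|$ through every approximation step (via the Dirichlet-integral bound $\int|\psi|\leq D(\mathcal{F})$ and a slight decrease of the weights), and then the locally uniform convergence $\varphi_n\to\varphi$ together with $\limsup_n\int_X|\varphi_n|\leq\int_X|\varphi|$ yields $L^1$-convergence by Lemma \ref{lem:L^1-conv}; no exact area matching is needed. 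Your final Riesz--Scheff\'e step and your use of the Heights Theorem to identify the limit are correct as stated, provided the convergence of heights is first secured by a valid construction of the $\gamma_n$.
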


Using the above theorem and an approximation of points in the Teichm\"uller space $T(X)$ by Strebel points (see \cite[page 106, Theorem 12]{GardinerLakic}) we extend Kerckhoff's formula \cite{Kerckhoff} for the Teichm\"uller metric in terms of the extremal lengths of simple closed curves from compact surfaces to parabolic surfaces (see Theorem \ref{thm:kerckhoff}).

\begin{thm}
Let $X\in O_G$ and $S$ be the set of simple closed geodesics on $X$. The Teichm\"uller distance between two points $[f:X\to Y]$ and $[g:X\to Z]$ in $T(X)$ is equal to
$$
d_T([f],[g])=\frac{1}{2}\log \sup_{\gamma\in S}\frac{\mathrm{ext}_Z(g(\gamma ))}{\mathrm{ext}_Y(f(\gamma ))}
$$
where $\mathrm{ext}_Y(\cdot )$ and $\mathrm{ext}_Z(\cdot )$ are extremal lengths on surfaces $Y$ and $Z$ of the corresponding simple closed curves.
\end{thm}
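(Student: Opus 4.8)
The plan is to prove the two inequalities separately. Throughout write $h=g\circ f^{-1}\colon Y\to Z$ for the comparison map, $K=e^{2d_T([f],[g])}$ for its extremal dilatation, and abbreviate
\[
R([f],[g])=\frac12\log\sup_{\gamma\in S}\frac{\mathrm{ext}_Z(g(\gamma))}{\mathrm{ext}_Y(f(\gamma))}.
\]
For the inequality $R\le d_T$ I would invoke the Gr\"otzsch inequality for extremal length: if $h'$ is quasiconformal, homotopic to $h$, with dilatation $K'$, then $\mathrm{ext}_Z(g(\gamma))\le K'\,\mathrm{ext}_Y(f(\gamma))$ for every $\gamma\in S$. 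Minimizing $K'$ over the homotopy class gives $\mathrm{ext}_Z(g(\gamma))\le K\,\mathrm{ext}_Y(f(\gamma))$, hence $R([f],[g])\le d_T([f],[g])$. The same two-sided estimate shows that $R$ is $1$-Lipschitz for $d_T$ in each variable, so it will suffice to prove the reverse inequality on a dense subset of $T(X)$.

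For $d_T\le R$ I would first reduce to the case that $[h]$ is a Strebel point, which is legitimate since these are dense \cite[page 106, Theorem 12]{GardinerLakic} and $R$ is continuous. At such a point the extremal map is a Teichm\"uller map $f^{\varphi}$ with initial differential $\varphi\in A(Y)$ and terminal differential $\psi\in A(Z)$ of equal norm $\|\psi\|_1=\|\varphi\|_1$; its horizontal and vertical foliations scale as $h(F_\varphi)=\sqrt K\,F_\psi$ and $h(V_\varphi)=\tfrac{1}{\sqrt K}V_\psi$. From the extended Hubbard--Masur correspondence I would record the identities $\mathrm{ext}_Y(F_\varphi)=\|\varphi\|_1$, $\mathrm{ext}_Z(V_\psi)=\|\psi\|_1$ and $i(F_\varphi,V_\varphi)=\|\varphi\|_1$.

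Now apply Corollary \ref{cor:jenkins-dense-L1} to choose Jenkins--Strebel differentials $\varphi_n\to\varphi$ in $L^1$. The horizontal foliation $F_n$ of $\varphi_n$ is a single weighted simple closed curve $c_n f(\gamma_n)$, so by the degree-two homogeneity of extremal length
\[
\frac{\mathrm{ext}_Z(g(\gamma_n))}{\mathrm{ext}_Y(f(\gamma_n))}=\frac{\mathrm{ext}_Z(h(F_n))}{\mathrm{ext}_Y(F_n)}.
\]
Here $\mathrm{ext}_Y(F_n)=\|\varphi_n\|_1\to\|\varphi\|_1$, the Gr\"otzsch estimate gives the upper bound $\mathrm{ext}_Z(h(F_n))\le K\|\varphi_n\|_1$, and the elementary inequality $i(\alpha,\beta)^2\le\mathrm{ext}(\alpha)\,\mathrm{ext}(\beta)$ applied to $\alpha=h(F_n)$, $\beta=V_\psi$ yields $\mathrm{ext}_Z(h(F_n))\ge i(h(F_n),V_\psi)^2/\mathrm{ext}_Z(V_\psi)$. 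Since $i(h(F_n),V_\psi)=\sqrt K\,i(F_n,V_\varphi)\to\sqrt K\,i(F_\varphi,V_\varphi)=\sqrt K\,\|\varphi\|_1$ by continuity of the intersection number under $L^1$-convergence, the lower bound also tends to $K\|\varphi\|_1$. Hence $\mathrm{ext}_Z(h(F_n))\to K\|\varphi\|_1$ and the ratio above tends to $K$, which gives $R([f],[g])\ge d_T([f],[g])$ at Strebel points and therefore everywhere.

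The main obstacle is analytic rather than combinatorial: on an infinite surface I must justify, within the extended Hubbard--Masur framework, (a) the extremality identity $\mathrm{ext}_Y(F_\varphi)=\|\varphi\|_1$, i.e. that the flat metric $|\varphi|$ realizes the extremal length of the horizontal foliation, (b) the continuity of the intersection number $i(F_n,V_\varphi)$ under $L^1$-convergence $\varphi_n\to\varphi$, and (c) the inequality $i(\alpha,\beta)^2\le\mathrm{ext}(\alpha)\,\mathrm{ext}(\beta)$ for measured foliations. The absence of compactness is the principal difficulty in each; the role of Corollary \ref{cor:jenkins-dense-L1} is precisely to convert these foliation identities into statements about genuine simple closed curves in $S$.
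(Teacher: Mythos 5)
Your skeleton coincides with the paper's: the Gr\"otzsch/quasi-invariance estimate for moduli of annuli gives $d_T\ge R$; the reduction to Strebel points via \cite[page 106, Theorem 12]{GardinerLakic} together with the two-sided estimate $e^{-2d_n}\mathrm{ext}_Z(g(\gamma))\le\mathrm{ext}_{Z_n}(g_n(\gamma))\le e^{2d_n}\mathrm{ext}_Z(g(\gamma))$ handles non-Teichm\"uller extremal maps; and Jenkins--Strebel density (Corollary \ref{cor:jenkins-dense-L1}) produces the maximizing sequence of simple closed curves. Where you genuinely diverge is in certifying that the extremal-length ratio along the approximating curves tends to $K$. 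You run Kerckhoff's original mechanism: a lower bound for $\mathrm{ext}_Z(h(F_n))$ obtained from $i(\alpha,\beta)^2\le\mathrm{ext}(\alpha)\,\mathrm{ext}(\beta)$ tested against the vertical foliation of the terminal differential, together with continuity of the intersection number under $L^1$-convergence. The three ingredients you flag as obstacles are real gaps in the infinite-surface setting: neither Minsky's inequality for measured foliations nor the continuity of $i(\cdot,\cdot)$ under $L^1$-convergence is established in the paper, and your item (a) is only available for the Jenkins--Strebel approximants (via Jenkins' theorem, the lemma attributed to Kerckhoff in Section 5). The paper's proof is arranged precisely to avoid (b) and (c): it \emph{defines} $\mathrm{ext}(\mu)=\int_X|\varphi_\mu|$ for $\mu\in ML_{\mathrm{int}}(X)$ (consistent with the classical extremal length of weighted simple closed curves by Jenkins' theorem), notes that the normalized Teichm\"uller map has terminal differential with $\int_Z|\varphi_h|=K\int_Y|\varphi|$, so that $\mathrm{ext}(h(\mu_\varphi))/\mathrm{ext}(\mu_\varphi)=K$ holds exactly, and then applies the continuity of $\mathrm{ext}$ on $ML_{\mathrm{int}}$ (Theorem \ref{thm:ext-l-cont}), which is immediate in the $L^1$-topology, to transfer this to the Jenkins--Strebel approximants. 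To salvage your route you would need to prove infinite-surface versions of Minsky's inequality and of the continuity of the intersection number; within the paper's definitional framework neither is required.
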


The above results underscore the importance of classifying surfaces into parabolic and non-parabolic surfaces. We use our methods to improve the existing classification in terms of the lengths of cuffs of geodesic pants decomposition of the surfaces.

Consider the Cantor tree surface $X_C$ and a geodesic pants decomposition as in Figure 7. At the level $n$, there are $2^n$ pairs of pants, and the union of all pairs of pants up to and including level $n$ pants is a compact subsurface $X_n$ with $2^{n+1}$ geodesic boundaries. Assume that the length of each boundary component of $X_n$ is equal to $\ell_{n+1}$. Basmajian, Hakobyan, and the author \cite{BHS} proved that if
$$
\ell_n\leq \frac{n}{2^n}
$$
then $X\in O_G$. McMullen \cite{mcmullen} proved that if
$$
\ell_n\geq C>0
$$
for all $n$ then $X\notin O_G$. Given that $X$ has many directions to escape towards its ends, it would be reasonable to expect that some choice of $\ell_n$ converging to zero would also give a non-parabolic surface. We show how small $\ell_n$ is allowed so that $X_C$ is not parabolic (see Theorem \ref{thm:cantor}).

\begin{thm}
Let $X_C$ be the Cantor tree surface with geodesic pants decomposition such that the level $n$ boundary geodesics lengths equal some $\ell_n$ for each $n$. If there exists $r>2$ such that
$$
\ell_n\geq \frac{n^r}{2^n}
$$
then $X\notin O_G$.
\end{thm}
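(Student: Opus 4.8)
The plan is to produce a single integrable holomorphic quadratic differential on $X_C$ whose horizontal foliation escapes to infinity on a set of positive area, and then invoke the cross-cut characterization of parabolicity (Theorem \ref{thm:par-ch-cross-cuts}). Indeed, if $X_C\in O_G$ then a.e.\ horizontal leaf of \emph{every} finite area holomorphic quadratic differential must be recurrent, so exhibiting one $\varphi\in A(X_C)$ with a positive measure set of cross-cut (equivalently, non-recurrent) horizontal trajectories already forces $X_C\notin O_G$. Thus the theorem reduces to a construction together with an area estimate.

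First I would fix the measured data on the binary tree dual to the pants decomposition. Declare a measured foliation $\mathcal F$ all of whose leaves are transverse to every cuff and run outward through the tree, so that a leaf traced in both directions exits through two distinct ends and is therefore a cross-cut. Flux conservation forces the total transverse measure crossing each level to be a constant $\mu>0$, and by the symmetry of the tree each of the $2^n$ cuffs at level $n$ (the boundary geodesics of $X_{n-1}$, of length $\ell_n$) carries transverse measure $\mu/2^n$, splitting into $\mu/2^{n+1}$ at each child. I would then realize $\mathcal F$ as the horizontal foliation of some $\varphi\in A(X_C)$ by the extension of the Hubbard--Masur theorem \cite{HubbardMasur} established in this paper, which also identifies $\int_{X_C}|\varphi|$ with the extremal length $\mathrm{Ext}_{X_C}(\mathcal F)$.

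The heart of the argument is the estimate $\int_{X_C}|\varphi|<\infty$. I would decompose $X_C$ conformally into the standard collars of its short geodesics together with the thick parts of the pants. By the collar lemma, the collar of a geodesic of length $\ell_n$ is an annulus of modulus comparable to $1/\ell_n$, while the thick parts degenerate to thrice-punctured spheres and hence have uniformly bounded conformal geometry. On a collar of modulus $m$ crossed by leaves of transverse measure $w$ the area contribution is comparable to $w^2 m$, so the $2^n$ collars at level $n$ contribute on the order of
\[
2^n\cdot\Big(\frac{\mu}{2^n}\Big)^2\cdot\frac{1}{\ell_n}=\frac{\mu^2}{2^n\ell_n},
\]
the thick parts contributing a convergent geometric series. (This is the continuous analogue of the effective resistance $\sum_n 2^{-n}/\ell_n$ of the dual binary network, the same quantity that controls McMullen's bounded-length case \cite{mcmullen}.) Hence $\int_{X_C}|\varphi|\lesssim \mu^2\sum_n \frac{1}{2^n\ell_n}$, and the hypothesis $\ell_n\ge n^r/2^n$ gives $\frac{1}{2^n\ell_n}\le n^{-r}$, so the series converges. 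Therefore $\varphi\in A(X_C)$, its horizontal foliation consists of cross-cuts of positive measure, and $X_C\notin O_G$.

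The hard part will be the realization-and-estimate step inside the pairs of pants, where the escaping foliation must branch at a zero of $\varphi$ and where one must control the area of the \emph{holomorphic} (not merely flat) realization on the given Riemann surface; the composition laws for extremal length together with the uniform geometry of the thick parts are what make the collar contributions dominant. I note finally that the naive estimate above already converges for every $r>1$, so the stated range $r>2$ leaves a comfortable margin that absorbs any polynomial-in-$n$ loss incurred in passing from the model foliation to its holomorphic realization, and it keeps the conclusion strictly on the non-parabolic side of the threshold $\ell_n\asymp n/2^n$ from \cite{BHS}.
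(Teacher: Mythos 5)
Your overall strategy is the same as the paper's: build an integrable partial measured foliation on $X_C$ all of whose leaves escape to infinity, realize it by a $\varphi\in A(X_C)$ via the extension of Hubbard--Masur (Theorem \ref{thm:main}), and conclude $X_C\notin O_G$ from the cross-cut characterization (Theorem \ref{thm:par-ch-cross-cuts}). The flux normalization $\mu/2^n$ per level-$n$ cuff is also exactly the paper's. Where you diverge is in how the energy estimate is organized: you propose a collar/thick-part decomposition in which the transverse measure is spread extremally over each collar (giving $w^2\cdot\mathrm{mod}$ per collar and a geometric series from the thick parts), whereas the paper builds an explicit foliation in narrow bands of width $\asymp\ell_n$ along the orthogeodesics of the pants, which have length $\asymp\log(1/\ell_n)$. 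That narrow band costs an extra factor of $\log(1/\ell_n)\asymp n$, turning $\sum_n 1/(2^n\ell_n)$ into $\sum_n \log(1/\ell_n)/(2^n\ell_n)$, and this factor is precisely why the hypothesis is $r>2$ rather than $r>1$.

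The genuine gap is that the step you yourself flag as ``the hard part'' is the entire proof. You never construct a proper partial measured foliation in the sense of Definition \ref{def:partial-fol} -- differentiable functions $v_i$ on Jordan domains with matching transverse measures across cuffs and across the collar/thick-part interfaces, leaves that concatenate into curves escaping through two distinct ends -- and without it the claimed Dirichlet bound is only a lower-bound heuristic coming from the extremal metric of each crossing family. Two specific omissions: (a) the gluing of leaves across cuffs into globally escaping trajectories requires controlling the twists (the paper first reduces to zero twists using the quasiconformal invariance of parabolicity and \cite{ALPS}, so that the orthogeodesics concatenate into the escaping family $\{g_k\}_k$); (b) matching the transverse measures where a band of measure $\mu/2^n$ splits into two bands of measure $\mu/2^{n+1}$ inside a degenerating pair of pants is exactly what the paper's pentagon decomposition and the trapezoid construction of Lemma \ref{lem:trapezoid} accomplish, and it is there that the quantitative loss is incurred. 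Your closing remark that $r>2$ ``absorbs any polynomial-in-$n$ loss'' is not an argument: a priori the loss from passing from the modulus heuristic to an actual realization could be any power of $n$, and bounding it \emph{is} the theorem. (If your collar-extremal construction can in fact be carried out with only an $O(1)$ loss, it would improve the theorem to $r>1$; but that would have to be proved, not assumed.)
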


A classical result of Mori \cite{Mori} and (in a more general form of) Rees \cite{Rees} states that $\mathbb{Z}^s$-covers of a compact Riemann surface are parabolic for $s=1, 2$ and not parabolic for $s\geq 3$. We introduce a topological complexity function $q:\mathbb{N}\to\mathbb{R}$ associated with a finite-area subsurface decomposition $\{ X_n\}_n$ of $X$ by consecutively adding pairs of pants. The quantity $q(n)$ is the number of boundary cuffs of $X_n$ that bound a component of $X\setminus X_n$ with non-simple ends. We prove (see Theorem \ref{thm:bdd-parbolic})

\begin{thm}
Let $X$ be an infinite Riemann surface with bounded geodesic pants decomposition. If
$$
\sum_n\frac{1}{q(n)}=\infty
$$
then $X\in O_G$.
\end{thm}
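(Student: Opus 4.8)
The plan is to reduce parabolicity to an extremal length (length–area) estimate and then to realize the hypothesis $\sum_n 1/q(n)=\infty$ as the divergence of the total ``resistance'' of $X$ to its ideal boundary. Recall that $X\in O_G$ is equivalent to the vanishing of the capacity of the ideal boundary, which in turn is equivalent to the extremal length of the family $\Gamma_\infty$ of curves joining a fixed compact piece $X_0$ to the ideal boundary being infinite (see \cite{AhlforsSario}). Thus it suffices to produce, for the given exhaustion $\{X_n\}_n$, a conformal metric $\rho$ on $X$ of finite total area for which every curve escaping to infinity has infinite $\rho$-length; then $\lambda(\Gamma_\infty)=\sup_\sigma L_\sigma(\Gamma_\infty)^2/A(\sigma)=\infty$ and $X\in O_G$.

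First I would fix the geometric data coming from the bounded pants decomposition. Each pair of pants has hyperbolic area exactly $2\pi$, and, since all cuff lengths lie in a fixed interval $[m,M]$, the hyperbolic distance between any two boundary cuffs of a pair of pants is bounded below by a constant $\delta=\delta(m,M)>0$. Writing $R_n=X_{n+1}\setminus X_n$ for the $n$-th shell, a union of $k_n$ pairs of pants, this gives $\mathrm{Area}_{hyp}(R_n)=2\pi k_n$ and a uniform lower bound $\delta$ for the hyperbolic length of any arc crossing a single pair of pants. I would then set $\rho=\sum_n a_n\,\chi_{R_n}\rho_{hyp}$ for weights $a_n>0$ to be chosen, so that $A(\rho)=\sum_n a_n^2\,2\pi k_n$ and any curve crossing $R_n$ contributes at least $a_n\delta$ to its $\rho$-length.

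The core of the argument is the separation of $X\setminus X_0$ into its branching part and its simple ends, and the choice of weights on each. A cuff of $\partial X_n$ bounding a component with simple ends begins a non-branching chain of pants (a topological half-tube); on the $j$-th such chain to appear I would use self-similar weights scaled by a summable factor (say $2^{-j}/k$ on its $k$-th pair of pants), so that the chain has finite $\rho$-area while any escape through it has infinite $\rho$-length, and the simple ends contribute only a finite total area $\sum_j 2^{-j}\cdot O(1)<\infty$. On the branching core the number of pants in $R_n$ is comparable to the number $q(n)$ of active cuffs of $\partial X_n$, because each active cuff is incident to a uniformly bounded number of pants of $R_n$; here I would take, with $c_n=1/q(n)$ and partial sums $S_n=\sum_{k\le n}c_k$, the Abel--Dini weights $a_n=c_n/S_n$. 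Since $c_n q(n)=1$, the area is $A_{\mathrm{core}}(\rho)\asymp\sum_n a_n^2 q(n)=\sum_n c_n/S_n^2<\infty$, while along any path staying in the core the length is $\gtrsim\delta\sum_n a_n=\delta\sum_n c_n/S_n=\infty$; the last two facts are exactly Abel--Dini, as $c_n/S_n\ge\log(S_n/S_{n-1})$ telescopes to $\infty$ while $c_n/S_n^2\le 1/S_{n-1}-1/S_n$ telescopes to $1/S_0$, and $S_n\to\infty$ is precisely the hypothesis $\sum_n 1/q(n)=\infty$. Every curve tending to the ideal boundary either eventually enters a simple end or crosses infinitely many core shells, so in all cases it has infinite $\rho$-length while $A(\rho)<\infty$, giving $\lambda(\Gamma_\infty)=\infty$ and hence $X\in O_G$.

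The main obstacle I anticipate is the organization of the two regimes into a single metric with uniform geometric constants: verifying that the bounded pants decomposition forces the per-shell area to be $\asymp q(n)$ on the active part (the proportionality $k_n^{\mathrm{core}}\lesssim q(n)$, via the bounded incidence between cuffs and pants) and that escaping curves genuinely accumulate the lengths $a_n$ shell by shell rather than slipping between shells; both are controlled by the lower bound $\delta$ on crossing widths and the equality $\mathrm{Area}_{hyp}=2\pi$ per pair of pants. Conceptually this is the Nash--Williams recurrence criterion for the pants--adjacency network of $X$, whose cut-set at level $n$ through the active cuffs has conductance $\asymp q(n)$; the analytic packaging via the length--area method is what keeps the argument inside the extremal-length framework used elsewhere in the paper.
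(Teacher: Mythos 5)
Your argument is essentially correct, but it takes a genuinely different route from the paper. The paper deduces the theorem from its quadratic-differential machinery: if $X\notin O_G$, Theorem \ref{thm:int-non-par-cross-cut} (which rests on the Hubbard--Masur extension of Section 3) produces a $\varphi\in A(X)$ whose horizontal lamination $\mu_\varphi$ gives positive mass $M$ to escaping geodesics; distributing $M$ over the $q(n)$ active cuffs of $\partial X_n$ and applying Cauchy--Schwarz gives $\sum_{i}i(\alpha_i,\mu_\varphi)^2\geq M^2/q(n)$ at each level, so $\sum_n 1/q(n)=\infty$ forces $\sum_n i(\alpha_n,\mu_\varphi)^2=\infty$, contradicting the $\ell^2$-bound of Proposition \ref{prop:int-numbers}. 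You instead run a direct length--area (Nash--Williams) argument: a conformal metric of finite area in which every escaping curve has infinite length, built from Abel--Dini weights $a_n=(1/q(n))/S_n$ on the shells. The combinatorial heart is the same (your cut-set of conductance $\asymp q(n)$ is exactly the paper's set of active cuffs, and Proposition \ref{prop:int-numbers} is itself a length--area estimate in disjoint collars), but your proof is self-contained and elementary --- it needs none of Sections 3--4 --- at the price of proving only this theorem, whereas the paper's Proposition \ref{prop:int-numbers} also constrains $ML_{\mathrm{int}}(X)$ and feeds the arguments of Theorems \ref{thm:upper-bdd} and \ref{thm:BHS}. Three points need tightening. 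First, the criterion ``$X\in O_G$ if and only if the $2$-modulus of the family of curves leaving every compact set is zero'' is correct and is the device used in \cite{BHS}, but you should quote it precisely: the direction you need is that vanishing modulus of the escaping family implies vanishing capacity of the ideal boundary, which is not the trivial comparison with the truncated families $\Gamma(\partial X_0,\partial X_n)$. Second, your picture of the inactive part is off: a component of $X\setminus X_n$ all of whose ends are simple cannot be an infinite ``half-tube'' of pants, since an infinite union of pants has infinite Euler characteristic and hence infinitely many ends or infinite genus, i.e.\ a non-simple end; so the inactive components are finite unions of pants whose only escape routes are cusps. This only simplifies your bookkeeping (any positive multiple of the hyperbolic metric already has infinite length along curves entering a cusp and finite area there), and it is exactly what guarantees that an escaping curve exits each $X_n$ for the last time through an \emph{active} cuff and therefore crosses a core pair of pants of the $n$-th shell, picking up length at least $\delta a_n$; the count $\#\{\text{core pants in the }n\text{-th shell}\}\leq q(n)$ holds because each active cuff abuts exactly one pair of pants from outside $X_n$. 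Third, your telescoping inequality is reversed: $c_n/S_n\leq\log(S_n/S_{n-1})$, not $\geq$; the divergence $\sum_n c_n/S_n=\infty$ is nevertheless the classical Abel--Dini statement and follows from the tail estimate $\sum_{k=n+1}^{m}c_k/S_k\geq 1-S_n/S_m\to 1$ as $m\to\infty$.
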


The above theorem recovers the result of Mori \cite{Mori} when the Riemann surface is obtained by taking $\mathbb{Z}^2$-translates of a finite area hyperbolic Riemann surface with four boundary geodesics (for the construction, see \cite[Section 10.5]{BHS}) because then we have $q(n)\leq const\cdot n$. In fact, $X\in O_G$ when $q(n)\leq const\cdot n(\log n)^p$ for $p<1$ which is an approximation statement between $\mathbb{Z}^2$- and $\mathbb{Z}^3$-covers.

To prove the above results, 
we establish an analogous statement to the Hubbard-Masur theorem \cite{HubbardMasur} for $X=\mathbb{H}/\Gamma$ with $\Gamma$ of the first kind by describing which measured foliations arise from horizontal foliations of differentials in $A(X)$.
Each non-singular horizontal trajectory of an integrable holomorphic quadratic differential $\varphi\in A (X)$ is homotopic to a simple hyperbolic geodesic on $X$. By the push-forward of the transverse measure $\int |Im(\sqrt{\varphi})|$ to the horizontal trajectories, we obtain a map $\varphi\mapsto \mu_{\varphi}$ from $A(X)$ to the space of measured lamination $ML(X)$. By the Heights Theorem \cite{Saric-heights}, we have that the straightening map $\varphi\mapsto \mu_{\varphi}$ is injective.
We need to characterize the image of $A(X)$. 

A partial measured foliation $\mathcal{F}$ on $X$ is a collection of closed Jordan domains $\{ U_i\}_i$ and measurable sets $\{ E_i\subset U_i\}_i$ of $X$ which carry the leaves of $\mathcal{F}$ and differentiable map $v_i:U_i\to\mathbb{R}$ such that $v_i=\pm v_j+const$ on $U_i\cap U_j$. The collection $\{ U_i\}_i$ does not necessarily cover $X$; hence we call it ``partial'' foliation. The leaves of $\mathcal{F}$ are given by $v_i^{-1}(a)$ for $a\in\mathbb{R}$ and the Dirichlet integral is well-defined on compact subsets of $X$(see \S \ref{sec:foliations}). A partial foliation $\mathcal{F}$ is called integrable if all but countably many leaves are homotopic to a geodesic of $X$ and the Dirichlet integral $D(\mathcal{F})$ is finite over all $X$. 

Let $ML_{\mathrm{int}}(X)$ be the space of all measured lamination $\mu\in ML(X)$ such that there exists an integrable partial foliation $\mathcal{F}$ on $X$ whose straigthening gives measured lamination $\mu_{\mathcal{F}}=\mu$. 
We prove (see Theorems \ref{thm:main})

\begin{thm}
\label{thm:mlf}
Let $X=\mathbb{H}/\Gamma$, with $\Gamma$ a Fuchsian group of the first kind. The straightening map
$$
A(X)\to ML(X)
$$
obtained by mapping $\varphi$ to measured lamination $\mu_{\varphi}$ is a homeomorphism onto the space $ML_{\mathrm{int}}(X)$.
\end{thm}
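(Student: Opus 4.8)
The plan is to handle injectivity, well-definedness, surjectivity and bicontinuity in turn, with the real content concentrated in surjectivity and in the passage to a limit along an exhaustion of $X$. Injectivity of the straightening map $\varphi\mapsto\mu_{\varphi}$ is already supplied by the Heights Theorem \cite{Saric-heights}, so nothing new is needed there. That the image lands in $ML_{\mathrm{int}}(X)$ is immediate from the definitions: writing $\varphi\in A(X)$ in its natural parameter $\zeta=\int\sqrt{\varphi}$, so that $\varphi=d\zeta^2$ away from the zeros, the horizontal trajectory structure is itself a partial measured foliation with local height functions $v=\mathrm{Im}\,\zeta$; all but countably many of its leaves are homotopic to geodesics, its straightening is $\mu_{\varphi}$ by construction, and its Dirichlet integral equals $\int_X|\varphi|<\infty$. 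Hence $\mu_{\varphi}\in ML_{\mathrm{int}}(X)$.

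For surjectivity, fix $\mu\in ML_{\mathrm{int}}(X)$ together with an integrable partial foliation $\mathcal{F}$ realizing $\mu_{\mathcal{F}}=\mu$ and $D(\mathcal{F})<\infty$. I would exhaust $X=\bigcup_n X_n$ by compact subsurfaces with geodesic boundary and, on each $X_n$, apply the compact Hubbard--Masur theorem \cite{HubbardMasur} to the restriction of $\mathcal{F}$ (after an appropriate doubling of $X_n$ across its boundary, or by solving the corresponding extremal problem with natural boundary conditions), obtaining a holomorphic quadratic differential $\varphi_n$ on $X_n$ whose horizontal foliation realizes $\mu|_{X_n}$. The essential a priori bound comes from the Dirichlet principle: in the natural parameter the Dirichlet integral of the horizontal foliation of $\varphi_n$ equals $\int_{X_n}|\varphi_n|$, while the holomorphic representative has harmonic height and therefore minimizes the Dirichlet energy in its class, so
$$
\int_{X_n}|\varphi_n|=D(\varphi_n)\le D(\mathcal{F}|_{X_n})\le D(\mathcal{F}).
$$
This uniform $L^1$-bound, independent of $n$, is the engine of the construction.

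With $\int_{X_n}|\varphi_n|$ bounded, the Cauchy integral estimates convert the $L^1$-bound into local uniform bounds on the holomorphic $\varphi_n$, and a normal-families/diagonal argument extracts a subsequence converging locally uniformly on compacta to a holomorphic $\varphi$ on $X$. Lower semicontinuity (Fatou) gives $\int_X|\varphi|\le\liminf_n\int_{X_n}|\varphi_n|\le D(\mathcal{F})$, so $\varphi\in A(X)$. To identify $\mu_{\varphi}=\mu$ I would compare intersection numbers: for a fixed simple closed geodesic $\beta$, taking $n$ large enough that $\beta\subset X_n$ and the restriction stabilizes, one has $i(\mu_{\varphi_n},\beta)=h_{\varphi_n}(\beta)=i(\mu,\beta)$; the local uniform convergence $\varphi_n\to\varphi$ then yields $h_{\varphi_n}(\beta)\to h_{\varphi}(\beta)=i(\mu_{\varphi},\beta)$, so $i(\mu_{\varphi},\beta)=i(\mu,\beta)$ for every $\beta$ and hence $\mu_{\varphi}=\mu$. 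Injectivity from the Heights Theorem simultaneously shows the limit $\varphi$ is independent of the subsequence chosen.

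Finally, for bicontinuity: continuity of $\varphi\mapsto\mu_{\varphi}$ reduces to the continuity of $\varphi\mapsto h_{\varphi}(\beta)=i(\mu_{\varphi},\beta)$ in the $L^1$-norm for each $\beta$, which follows from the local uniform dependence of heights on the differential; continuity of the inverse follows by the same compactness scheme, since if $\mu_k\to\mu$ with differentials $\varphi_k$, the uniform bounds yield a subsequential limit $\psi$ with $\mu_{\psi}=\lim_k\mu_{\varphi_k}=\mu=\mu_{\varphi}$, so injectivity forces $\psi=\varphi$ and uniqueness of the limit upgrades this to convergence of the whole sequence. I expect the principal obstacle to be precisely the control of the transverse measure in the limit: ensuring that no mass of $\mu$ is lost along the boundaries $\partial X_n$ or escapes into the ends of $X$, and that the local uniform convergence obtained from the normal-families argument can be promoted to the $L^1$-convergence needed for genuine bicontinuity. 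This is exactly where the finiteness $D(\mathcal{F})<\infty$ and the hypothesis that $\Gamma$ is of the first kind (so that the leaves straighten to an honest lamination and nothing is carried off to a nontrivial limit set) must be used decisively, in tandem with a careful choice of the boundary conditions imposed on each $X_n$.
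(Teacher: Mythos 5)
Your overall architecture is the same as the paper's (exhaust $X$, double each piece, apply compact Hubbard--Masur, use the Dirichlet bound $\int_{X_n}|\varphi_n|\le D(\mathcal{F})$ to extract a locally uniform limit, identify it by heights), and your instinct that the danger is loss of transverse mass at $\partial X_n$ is exactly right. But the proposal does not actually close the two places where that danger materializes, and one intermediate claim is false as stated. First, the restriction $\mathcal{F}|_{X_n}$ followed by doubling does not in general produce a proper measured foliation: leaves of $\mathcal{F}$ that cross $\partial X_n$ in arcs homotopic (rel endpoints) into the boundary double to inessential closed leaves, and leaves may accumulate on $\partial X_n$ without a well-defined endpoint there. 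The paper has to erase all boundary-parallel strips and replace $\partial X_n$ by step curves built from horizontal and vertical arcs (the surfaces $X_n'$ and foliations $\mathcal{F}_n$ of Definitions \ref{def:X_n'} and \ref{def:F_n}), and then prove separately that the doubled foliation is proper (Lemma \ref{lem:proper}). "An appropriate doubling or natural boundary conditions" is precisely the step that needs the construction.

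Second, your identification of the limit rests on the claim that for $n$ large and $\beta\subset X_n$ one has $h_{\varphi_n}(\beta)=i(\mu,\beta)$ because "the restriction stabilizes." This is not true: $h_{\varphi_n}(\beta)=h_{\mathcal{F}_n}(\beta)$, where $\mathcal{F}_n$ is the erased/truncated foliation, and competitor curves for the height in $X$ need not stay in $X_n'$, so neither $h_{\mathcal{F},X_n'}(\beta)=h_{\mathcal{F}}(\beta)$ nor $h_{\mathcal{F}_n}(\beta)=h_{\mathcal{F}}(\beta)$ holds at any finite stage. One needs the two-sided estimate $h_{\mathcal{F}}(\beta)-\epsilon\le h_{\mathcal{F}_n}(\beta)\le h_{\mathcal{F},X_n'}(\beta)$ together with $h_{\mathcal{F},X_n'}(\beta)\to h_{\mathcal{F}}(\beta)$ and Strebel's continuity of heights under locally uniform convergence. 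The lower bound is the substantive point: it is proved in Lemma \ref{lem:lim-not-zero} by rerouting a competitor curve along leaves of the erased boundary-parallel strips so that its crossing of those strips contributes less than $\epsilon$ to the height integral. Without this estimate nothing prevents the limit $\varphi$ from being identically zero (all the transverse mass could sit on leaves that get erased), which is exactly the gap you flagged but did not fill. Finally, note that the bicontinuity you argue for at the end is, in the paper, true by fiat: the topology on $ML_{\mathrm{int}}(X)$ is defined as the pushforward of the $L^1$-topology under the straightening map, so the real content of the theorem is the bijectivity.
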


The above theorem completely characterizes which measured laminations can arise from the horizontal foliations of integrable holomorphic quadratic differentials on a Riemann surface $X$. The space $ML_{\mathrm{int}}(X)$ is invariant under quasiconformal maps (see Theorem \ref{thm:mlf-qc-inv}), but it is not invariant under arbitrary homeomorphisms. In other words, $ML_{\mathrm{int}}(X)$ varies as different Riemann surface structures are placed on a single topological surface of an infinite type, unlike in the case of surfaces of finite type.
This phenomenon underscores the difference between Teichm\"uller spaces of finite and infinite surfaces.

\vskip .1 cm

\noindent {\it Acknowledgements.} We thank Curtis McMullen for pointing out that Theorem 1.1 holds for arbitrary Riemann surfaces. We are grateful to anonymous referees for greatly improving the paper's clarity.

\section{Integrable holomorphic quadratic differentials and partial measured foliations}
\label{sec:foliations}

A {\it quadratic differential} $\varphi$ on a Riemann surface $X$ is an assignment of a function $\varphi (z)$ in each coordinate chart $z$ such that $\varphi (z)dz^2$ remains invariant under the coordinate change. A quadratic differential $\varphi$ is {\it holomorphic} if $\varphi (z)$ is a holomorphic function in each coordinate chart of $X$. A quadratic differential $\varphi$ is {\it integrable} if $\int_X|\varphi |<\infty$, where the absolute value $|\varphi (z)|$ is the area form.

A Riemann surface $X$ is said to be {\it hyperbolic} if it contains a unique metric of curvature $-1$ in its conformal class. A hyperbolic Riemann surface is conformally equivalent to $\mathbb{H}/\Gamma$, where $\mathbb{H}$ is the upper half-plane and $\Gamma$ is a Fuchsian group acting on $\mathbb{H}$. The hyperbolic metric on $X$ is given by the projection of the hyperbolic metric on $\mathbb{H}$. 

A holomorphic quadratic differential $\varphi$ on a hyperbolic Riemann surface $X$ defines a natural parameter away from its zeroes as follows. Consider a simply-connected local chart $z$ of a neighborhood of a point $P\in X$, where $z_0$ corresponds to $P$ and $\varphi (z)\neq 0$ in the whole chart. Then $\sqrt{\varphi (z)}$ is a well-defined holomorphic function in the entire chart, and the {\it natural parameter} is given by
$$
w(z)=u+iv=\int_{z_0}^z\sqrt{\varphi (z)}dz .
$$
Different choices of the local chart $z$, the base point $P$, and the square root give a different natural parameter $w_1$. However, we have $w_1=\pm w+const$ on the intersection of the charts.  Therefore the horizontal and vertical lines in the $w$-parameter are mapped onto the horizontal and vertical lines in the $w_1$-parameter. A {\it horizontal arc} of $\varphi$ is an arc on $X$ that is the preimage of a horizontal line in the natural parameter of $\varphi$. A {\it horizontal trajectory} of $\varphi$ is a maximal horizontal arc. A horizontal trajectory can either be closed or open. If it is open, it can accumulate to a zero of $\varphi$ in either direction. Analogously, a {\it vertical arc} is the preimage of a vertical line in the natural parameter, and a {\it vertical trajectory} is a maximal vertical arc.

The {\it horizontal foliation} of $\varphi$ on $X$ is a foliation whose leaves are horizontal trajectories. The transverse measure is given by $\int_{\alpha}|Im(\sqrt{\varphi (z)}dz)|=\int_{\alpha}|dv|$, where $\alpha$ is a differentiable arc transverse to the horizontal leaves. At a zero of $\varphi$ of order $k>0$, the horizontal foliation is strictly speaking not a foliation but instead has a well-known structure of $(k+2)$-pronged singularity (see \cite{Strebel}). 
 
A Riemann surface $X=\mathbb{H}/\Gamma$ is said to be {\it infinite} if $\Gamma$ is not finitely generated. 
In this paper, we study integrable holomorphic quadratic differentials $\varphi$ on an infinite hyperbolic Riemann surface $X$.

It will be useful for our purposes to have a notion of a partial measured foliation whose definition is motivated by the paper of Gardiner and Lakic (see \cite{GardinerLakic}) where they considered closed surfaces. We are mainly interested in hyperbolic surfaces with infinite area. Even when $X$ has a finite hyperbolic area, our definition is slightly more general than the definition in \cite{GardinerLakic}.

\begin{definition}
\label{def:partial-fol}
Consider $X=\mathbb{H}/\Gamma$ as an infinite Riemann surface such that $\Gamma$ is a  Fuchsian group of the first kind.
A {\it partial measured foliation} $\mathcal{F}$ on $X$ consists of countably many triples $\{ (v_i, U_i,E_i)\}_i$ where $U_i\subset X$ is a closed Jordan domain, $E_i\subset U_i$ is a measurable set and $v_i:U_i\to\mathbb{R}$ is a continuous function satisfying the following conditions:
\begin{enumerate}
\item[(i)] The family $\{ U_i\}_i$ is locally finite on $X$.

\item[(ii)]
The boundary of $U_i$, denoted as $\partial U_i$, is piecewise smooth and is divided into four closed arcs with non-overlapping interiors. Within this partition, two specific opposite arcs, namely $a_i^1$ and $a_i^2$, are chosen as the vertical sides of $\partial U_i$.

\item[(iii)]
The functions $v_i:U_i\to\mathbb{R}$ are differentiable with surjective $dv_i$ at the tangent space of each point of the interior of $U_i$. 

By the Implicit Function Theorem, the pre-image $v_i^{-1}(c)$ for $c\in\mathbb{R}$, if non-empty, is an open differentiable arc. We require that $v_i^{-1}(c)$ at one end accumulates to a unique point in $a_i^1\subset \partial U_i$ and the other end to a unique point in $a_i^2\subset \partial U_i$ and that $E_i$ is foliated by the differentiable arcs  $v_i^{-1}(c)$ for $c\in\mathbb{R}$.

\item[(iv)] For any two sets $U_i$ and $U_j$,  we have
 \begin{equation}
\label{eq:coord-change}
v_i=\pm v_j+const
\end{equation} 
on $U_i\cap U_j$.

\item[(v)] For any two $E_i$ and $E_j$, we have
$$
E_i\cap \bar{U}_j=E_j\cap\bar{U}_i
$$
where $\bar{U}$ is the closure of $U$. 
\end{enumerate}
\end{definition}

Since $dv_i$ are surjective, the Implicit Function Theorem implies that the sets $U_i$ are foliated by differentiable arcs $v_i^{-1}(c)$ for $c\in\mathbb{R}$. Therefore $U_i$ has a product structure with leaves being horizontal arcs.

The condition {\rm (\ref{eq:coord-change})} is the standard condition as in the case of the transition maps for the natural parameters of a holomorphic quadratic differential. Two sets $U_i$ and $U_j$ can intersect in a set with an interior or only along their boundaries. When  $U_i$ and $U_j$ intersect along their vertical sides, the condition {\rm (\ref{eq:coord-change})} imposes the equality of the transverse measure on the common intersection. The functions $v_i$ and $\pm v_j+const$  glue to a piecewise differentiable function in a neighborhood of the vertical boundary sides common to $U_i$ and $U_j$. 

The sets $E_i\subset U_i$ comprise of the local leaves of the partial foliation $\mathcal{F}$ and are required to be measurable in order for us to be able to apply the integration. The condition (v) implies that the leaves can be extended across the intersecting charts $U_i\cap U_j$. 
This flexibility of ``gluing only along the vertical sides and measurable sets''  allows us to construct important examples of partial foliations in subsequent sections. 

\begin{definition}
\label{def:h-trajectory}
For a partial measured foliation $\mathcal{F}$, a {\it horizontal arc} is a curve in $X$ that is a connected union of $v_i^{-1}(c_i)\subset E_i$ for some finite or infinite choice of indices $i$ and real numbers $c_i$. A {\it horizontal trajectory} of $\mathcal{F}$ is a maximal horizontal arc, including the possibility of a closed trajectory.
\end{definition}

A set of horizontal arcs of $\mathcal{F}$ is called a {\it measurable strip} if it is homeomorphic to a product of a measurable set and horizontal arcs. The sets $E_i$ are measurable strips.

Given a closed curve $\gamma$ on $X$, the {\it height} of (the homotopy class) of $\gamma$ with respect to a partial foliation $\mathcal{F}$ is given by
$$
h_{\mathcal{F}}(\gamma )=\inf_{\gamma_1} \int_{\gamma_1\cap\mathcal{F}}|dv|
$$
where the infimum is over all closed curves $\gamma_1$ in $X$ homotopic to $\gamma$ and $\gamma_1\cap\mathcal{F}$ is the union of $\gamma_1\cap E_i$ over all indices $i$. The integration of the form $|dv|$ is independent of the chart by (\ref{eq:coord-change}) and if a subarc $\gamma_2$ of $\gamma_1$  does not intersect the support of $\mathcal{F}$ we set $\int_{\gamma_2}|dv|=0$. Since we assumed that the functions $v_i:U_i\to\mathbb{R}$ are $C^1$, it follows that $\int_{\gamma\cap\mathcal{F}}|dv|<\infty$ for each compact differentiable curve or arc $\gamma$ which implies $h_{\mathcal{F}}(\gamma )<\infty$ for every simple closed curve $\gamma$. 

If $Y$ is a subsurface of $X$ and $\gamma\subset Y$, then we define
$$
h_{\mathcal{F},Y}(\gamma )=\inf_{\gamma_1}\int_{\gamma_1\cap\mathcal{F}}|dv|
$$
where the infimum is over all closed curves $\gamma_1$ in $Y$ that are homotopic to $\gamma$. 

\begin{definition}
A horizontal trajectory of $\mathcal{F}$ that limits to a point in $X$ is called a singular trajectory.
A partial measured foliation $\mathcal{F}$ on $X$ is said to be proper if, except countably many singular trajectories, the lift to the universal cover $\mathbb{H}$ of each horizontal trajectory accumulates to an ideal endpoint on $\partial\mathbb{H}$ on each end and the two points of the accumulation are distinct. \end{definition}

If $\mathcal{F}$ is a proper partial foliation of $X$, then the lift $\tilde{\mathcal{F}}$ to $\mathbb{H}$ is a proper partial measured foliation of $\mathbb{H}$. Its set of horizontal trajectories is invariant under $\Gamma$.
Each non-singular horizontal trajectory of $\tilde{\mathcal{F}}$ has exactly two ideal endpoints on the ideal boundary $\partial\mathbb{H}$ of $\mathbb{H}$. It can be pulled tight to a hyperbolic geodesic of $\mathbb{H}$ with the same endpoints on $\partial\mathbb{H}$. 
We denote by $G(\tilde{\mathcal{F}})$ the closure of the set of geodesics obtained by replacing the non-singular horizontal trajectories of $\tilde{\mathcal{F}}$ with the hyperbolic geodesics that share the same endpoints on $\mathbb{H}$. 
To see that $G(\tilde{\mathcal{F}})$ is a geodesic lamination in the hyperbolic plane $\mathbb{H}$, it is enough to establish that any two geodesics obtained by pulling tight two non-singular horizontal trajectories of $\tilde{\mathcal{F}}$ do not intersect. If they do intersect, then the pairs of their endpoints separate each other on $\partial\mathbb{H}$ and the non-singular trajectories of $\tilde{\mathcal{F}}$ limiting to these pairs intersect in $\mathbb{H}$ which is a contradiction. Finally, the closure of a set of non-intersecting geodesics in $\mathbb{H}$ consists of non-intersecting geodesics; therefore, $G(\tilde{\mathcal{F}})$ is a geodesic lamination.

By repeating the arguments in \cite[\S 3.2]{Saric20}, the transverse measure to $\mathcal{F}$  induces a transverse measure to $\tilde{\mathcal{F}}$ which in turn induces a transverse measure on $G(\tilde{\mathcal{F}})$. Denote by $\mu_{\tilde{\mathcal{F}}}$ the induced measured lamination in $\mathbb{H}$ and note that it is invariant under the action of $\Gamma$. Therefore the measured lamination $\mu_{\tilde{\mathcal{F}}}$ induces a measured lamination $\mu_{{\mathcal{F}}}$ on $X$ and we will call this process {\it straightening} of $\mathcal{F}$.

\section{Realizing integrable foliations by quadratic differentials}

Let $\mathcal{F}$ be a proper partial measured lamination on $X$ given by differentiable maps $v_i:U_i\to\mathbb{R}$ on a collection of pairs of sets $\{ (U_i, E_i)\}_i$. The collection  $\{ U_i\}_i$ is locally finite in $X$, i.e., every compact set in $X$ intersects only finitely many sets of the collection. 

The Dirichlet integral of $v_i$ on $E_i\subset U_i$ is given by $\int_{E_i}[(\frac{\partial v_i}{\partial x})^2+ (\frac{\partial v_i}{\partial y})^2]dxdy$ and by (\ref{eq:coord-change}) we have
$$
\int_{E_i\cap E_j}\Big{[}\Big{(}\frac{\partial v_i}{\partial x}\Big{)}^2+ \Big{(}\frac{\partial v_i}{\partial y}\Big{)}^2\Big{]}dxdy=\int_{E_i\cap E_j}\Big{[}\Big{(}\frac{\partial v_j}{\partial x}\Big{)}^2+ \Big{(}\frac{\partial v_j}{\partial y}\Big{)}^2\Big{]}dxdy.
$$
Using the partition of unity on $X$, the Dirichlet integral $D(\mathcal{F})$ of $\mathcal{F}$ over $\cup_jE_j\subset X$ is well-defined. If the integration is over a subsurface $Y$ of $X$, denote the corresponding Dirichlet integral over $(\cup_jE_j)\cap Y$ by $D_Y(\mathcal{F})$. 

\begin{definition}
A proper partial measured foliation $\mathcal{F}$ on $X$ is called an integrable foliation if $D(\mathcal{F})=D_X(\mathcal{F})<\infty$.
\end{definition}

Consider an integrable holomorphic quadratic differential $\varphi$ on a hyperbolic Riemann surface $X=\mathbb{H}/\Gamma$  where $\Gamma$ is of the first kind. If $w=u+iv$ is a natural parameter of $\varphi$ with the coordinate chart $U$, then $v:U\to\mathbb{R}$ defines the horizontal foliation $\mathcal{F}_{\varphi}$ of the differential $\varphi$. The horizontal foliation $\mathcal{F}_{\varphi}$ is a proper partial foliation because each non-singular horizontal trajectory of $\varphi$ when lifted to $\mathbb{H}$ accumulates to a single point in $\partial \mathbb{H}$ in each direction. The two limit points are different (see \cite{MardenStrebel}, \cite{Strebel}).

Note that for $\mathcal{F}_{\varphi}$ we have $E_i\equiv U_i$ for all $i$. 
Since $\int_U|\varphi (z)|dxdy=\int_{w(U)}dudv=\int_{w(U)}[(\frac{\partial v}{\partial u})^2+ (\frac{\partial v}{\partial v})^2]dudv$ and by the invariance of the Dirichlet integral under conformal maps, we get
$$
\int_U|\varphi (z)|dxdy=\int_{U}\Big{[}\Big{(}\frac{\partial v}{\partial x}\Big{)}^2+ \Big{(}\frac{\partial v}{\partial y}\Big{)}^2\Big{]}dxdy =D_U(\mathcal{F}_{\varphi}).
$$
Therefore if $\int_X|\varphi (z)|dxdy<\infty$ we have $D(\mathcal{F}_{\varphi})<\infty$, i.e. $\mathcal{F}_{\varphi}$ is an integrable partial foliation.
 
 We prove that the converse of the above statement is true in the following sense. 
 
 \begin{definition}
 A measured geodesic lamination $\mu$ on a Riemann surface $X$ is said to be realizable by an integrable partial foliation if there is an integrable partial foliation $\mathcal{F}$ such that $\mu_{\mathcal{F}}=\mu$.
 \end{definition}
 
Let $ML_{\mathrm{int}}(X)$ be the space of all measured geodesic laminations on the Riemann surface $X$ that are realizable by integrable partial foliations. Denote by $A (X)$  the space of all integrable holomorphic quadratic differentials on $X$.
 
 In the case of a compact surface of genus at least two, Hubbard-Masur \cite{HubbardMasur} and Kerckhoff \cite{Kerckhoff} proved that the map which assigns to each holomorphic quadratic differential its horizontal foliation is a bijection between the space of holomorphic quadratic differentials and homotopy classes of measured foliation of the surface, where measured foliations cover the surface. In other words, given a measured foliation on a compact surface there is a unique holomorphic quadratic differential whose horizontal foliation is homotopic to the given foliation. The space of homotopy classes of measured foliations is homeomorphic to the space of measured (geodesic) laminations on the surface by straightening the leaves of the foliation to hyperbolic geodesics (see \cite{Levitt}, \cite{Thurston}). Therefore, the result of Hubbard-Masur \cite{HubbardMasur} and Kerckhoff \cite{Kerckhoff} can be restated as the space of holomorphic quadratic differential on a compact surface of genus at least two is in a one-to-one correspondence with the space of measured laminations on the surface via straightening the leaves of the horizontal foliations.  
 
 We prove an analogous statement for infinite surfaces. 

\begin{thm}
\label{thm:main}
Let $X=\mathbb{H}/\Gamma$ be an infinite Riemann surface, where $\Gamma$ is a Fuchsian group of the first kind. The map
$
A(X)\to ML_{\mathrm{int}}(X)
$ defined by
$$\varphi\mapsto \mu_{\varphi}$$ obtained by straightening the trajectories of the horizontal foliation of $\varphi$ is a bijection.
\end{thm}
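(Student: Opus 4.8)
The map is injective by the Heights Theorem \cite{Saric-heights} recalled above, so the entire content of the statement is the surjectivity onto $ML_{\mathrm{int}}(X)$. The plan is to fix $\mu\in ML_{\mathrm{int}}(X)$, choose by definition an integrable partial foliation $\mathcal{F}$ with $\mu_{\mathcal{F}}=\mu$ and $D(\mathcal{F})<\infty$, and manufacture $\varphi\in A(X)$ with $\mu_{\varphi}=\mu$ by a compactness argument along an exhaustion. First I would exhaust $X$ by an increasing sequence of compact subsurfaces $X_n$ with piecewise-geodesic boundary and $\bigcup_n X_n=X$. On each compact bordered surface $X_n$ I would invoke the classical Hubbard-Masur/Kerckhoff realization \cite{HubbardMasur,Kerckhoff} to produce a holomorphic quadratic differential $\varphi_n$ on $X_n$ whose horizontal foliation carries the prescribed heights $h_{\mathcal{F},X_n}(\gamma)$ for every simple closed curve $\gamma\subset X_n$; equivalently, $\varphi_n$ is the Dirichlet-energy minimizer among measured foliations on $X_n$ realizing those heights.

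The key quantitative input is an energy bound uniform in $n$. Since the restriction $\mathcal{F}|_{X_n}$ is itself a competitor realizing the heights $h_{\mathcal{F},X_n}(\gamma)$, the minimizing property of $\varphi_n$ together with the identity $\int_{X_n}|\varphi_n|=D_{X_n}(\mathcal{F}_{\varphi_n})$ established in \S\ref{sec:foliations} gives
$$
\int_{X_n}|\varphi_n| = D_{X_n}(\mathcal{F}_{\varphi_n})\le D_{X_n}(\mathcal{F})\le D(\mathcal{F})<\infty .
$$
Because each $\varphi_n$ is holomorphic, the function $|\varphi_n|$ is subharmonic, so the sub-mean-value inequality converts this $L^1$ bound into a local uniform bound on any fixed compact subset of $X$ once $n$ is large enough that the set, together with a fixed disk neighborhood, lies in $X_n$. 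By Montel's theorem the family $\{\varphi_n\}$ is normal; after passing to a subsequence $\varphi_n\to\varphi$ locally uniformly, with $\varphi$ holomorphic on $X$, and Fatou's lemma yields $\int_X|\varphi|\le D(\mathcal{F})<\infty$, so that $\varphi\in A(X)$.

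It then remains to identify the limit, that is, to prove $\mu_{\varphi}=\mu$, which I would do by matching heights over all simple closed geodesics $\gamma$. On one side, $h_{\mathcal{F}_{\varphi_n}}(\gamma)\to h_{\mathcal{F}_{\varphi}}(\gamma)$ should follow from the local uniform convergence $\varphi_n\to\varphi$ together with the uniform control on the total mass. On the other side, for $n$ large one has $h_{\mathcal{F}_{\varphi_n},X_n}(\gamma)=h_{\mathcal{F},X_n}(\gamma)$, and $h_{\mathcal{F},X_n}(\gamma)\to h_{\mathcal{F}}(\gamma)$ as $X_n\uparrow X$. Combining these would give $h_{\mathcal{F}_{\varphi}}(\gamma)=h_{\mathcal{F}}(\gamma)$ for every $\gamma$, and since the heights determine the measured lamination this yields $\mu_{\varphi}=\mu_{\mathcal{F}}=\mu$.

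I expect the genuine difficulty to lie in this last step rather than in the production of the limit. One must rule out transverse measure escaping to the ends of $X$ during the passage to the limit, and establish the continuity of the height functional $\gamma\mapsto h_{\mathcal{F}_{\psi}}(\gamma)$ under locally uniform limits of holomorphic differentials of bounded $L^1$ norm, proving both the lower semicontinuity $h_{\mathcal{F}_{\varphi}}(\gamma)\le\liminf_n h_{\mathcal{F}_{\varphi_n}}(\gamma)$ and its reverse. The finiteness of the total energy $D(\mathcal{F})$ and the properness of the foliations (which forces the trajectories to have distinct ideal endpoints, so that the straightening behaves continuously) are the tools I would use to prevent such loss of mass, applying the finite-area Hubbard-Masur estimates uniformly along the exhaustion rather than on a single compact surface.
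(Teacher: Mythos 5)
Your overall strategy---exhaust $X$, realize the restricted heights on finite pieces via Hubbard--Masur, extract a locally uniform limit using the Dirichlet-energy bound $\int_{X_n}|\varphi_n|\le D(\mathcal{F})$ plus normal families and Fatou, then identify the limit by matching heights---is exactly the skeleton of the paper's proof. But two of the steps you treat as routine are precisely where the paper has to work, and as written they are genuine gaps. First, you cannot directly invoke Hubbard--Masur/Kerckhoff on a compact bordered subsurface $X_n$ to realize the heights $h_{\mathcal{F},X_n}(\gamma)$ of the raw restriction $\mathcal{F}|_{X_n}$: that theorem lives on closed surfaces, and the restriction of a partial foliation to $X_n$ has leaves that cross $\partial X_n$ arbitrarily and, worse, strips of leaves homotopic into the boundary rel endpoints. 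The paper's route is to truncate along step curves transverse to the foliation (producing a surface $X_n'$), \emph{erase} all boundary-parallel leaves, double $X_n'$ across its boundary, and prove the doubled foliation is proper (Lemma \ref{lem:proper}) before applying Hubbard--Masur to the closed double. Without some version of this construction your $\varphi_n$ is not defined, and the inequality $\int_{X_n}|\varphi_n|\le D_{X_n}(\mathcal{F})$ (which in the paper comes from Bourque's length-area theorem applied on the double, with a factor of $2$) has nothing to apply to.

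Second, the erasure just described changes the heights: $\varphi_n$ realizes the heights of the \emph{modified} foliation $\mathcal{F}_n$, not of $\mathcal{F}|_{X_n}$, so your identity $h_{\mathcal{F}_{\varphi_n},X_n}(\gamma)=h_{\mathcal{F},X_n}(\gamma)$ fails as stated and the lower bound $h_\varphi(\gamma)\ge h_{\mathcal{F}}(\gamma)$ --- equivalently, the fact that the Montel limit is not identically zero --- does not follow. The paper closes this gap with a quantitative argument (the $\epsilon/2^k$ curve-modification in the proof of Lemma \ref{lem:lim-not-zero}): any competitor curve $\gamma_1\subset X_n'$ can be rerouted along chosen leaves inside the countably many erased strips so that the erased leaves contribute at most $\epsilon$ to $\int_{\gamma_1'}|dv|$, giving $h_{\mathcal{F}_n}(\gamma)\ge h_{\mathcal{F}}(\gamma)-\epsilon$. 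That estimate, together with the reflection-invariance of $\widehat{\varphi}_n$ (so that $h_{\mathcal{F}_n}(\gamma)=h_{\varphi_n}(\gamma)$) and Strebel's continuity of heights under locally uniform convergence (\cite[Theorem 24.7]{Strebel}), is what makes the height-matching work in both directions. You correctly identify this as the crux, but identifying it is not the same as supplying it; as it stands your argument could converge to $\varphi\equiv 0$.
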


\begin{proof}
The map $A(X)\to ML(X)$ which assigns to $\varphi\in A(X)$ the geodesic lamination $\mu_{\varphi}$ obtained by straightening the horizontal foliation is well-defined and injective (see \cite[Theorem 5.2]{Saric-heights}). We established above that the image of $A(X)$ is in $ML_{\mathrm{int}}(X)$. It remains to prove that each $\mu\in ML_{\mathrm{int}}(X)$ is realized by the horizontal foliation of some $\varphi\in A(X)$. Sections \ref{sec:double} and \ref{sec:limit} are devoted to proving this statement.
\end{proof}

\subsection{The partial foliation on the double surface}
\label{sec:double}
Since $\Gamma$ is of the first kind, the Riemann surface $X=\mathbb{H}/\Gamma$ has a geodesic pants decomposition (see \cite{BS}). A geodesic pair of pants in the decomposition of $X$ can have at most two cusps on its boundary. In particular, it is not necessarily a compact subset of $X$. However, each pair of pants has a finite hyperbolic area. We form an exhaustion $\{ X_n\}_n$ of $X$ where each $X_n$ is a connected union of finitely many pairs of pants from the fixed pants decomposition. Then each $X_n$ is a finite area hyperbolic surface with a boundary consisting of some finite set of closed boundary geodesics and finitely many cusps of the pants decomposition. Closed boundary geodesics of pairs of pants in the pants decomposition will be called {\it cuffs} for short.

Let $\mathcal{F}$ be an integrable partial foliation of $X$ that represents $\mu\in ML_{\mathrm{int}}(X)$. 
If $\gamma$ is a simple closed geodesic in $X$, then the height $h_{\mathcal{F}}(\gamma )$ of $\gamma$ with respect to $\mathcal{F}$ is equal to the intersection $i(\mu ,[\gamma ])$, where $[\gamma ]$ is the homotopy class of $\gamma$ (see \cite{Saric-heights} and \cite{MardenStrebel1}). 


We modify the partial foliation $\mathcal{F}$ to obtain a partial foliation $\mathcal{F}_n$ on a Riemann subsurface $X_n'$ of $X$ that is homotopic to $X_n$. Let $\mathfrak{g}$ be a horizontal trajectory of $\mathcal{F}$. Consider a component $\mathfrak{g}_n$ of $\mathfrak{g}\cap X_n$. If $\mathfrak{g}_n$ has both of its endpoints on the same component of the boundary of $X_n$ and if it can be homotoped relative its endpoints to the corresponding boundary component, then the family of all components of horizontal leaves of $\mathcal{F}\cap X_n$ homotopic to $\mathfrak{g}_n$ relative the boundary component is nested. Consider only the subfamily of $\{ E_i\}_i$ that intersects $X_n$ and erase all the leaves of $\mathcal{F}\cap X_n$ that can be homotoped to a single boundary component modulo their endpoints. Denote the obtained family of sets $\{ E_j^n\}_j$. 
By the erased leaves' nesting property and the foliation's local product structure in $E_i$, each $E_j^n$ is obtained by deleting all the leaves below or above a single local leaf. This implies that the new family $\{ E_j^n\}_j$ is contained in closed Jordan domains $\{U_j^n\}_j$ which are subdomains of the family $\{ U_i\}_i$ satisfying the properties in the definition of a partial measured lamination where the functions are given by the restrictions of $v_i$. 

Each set $U_j^n$ intersects $X_n$, but it does not necessarily lie in $X_n$. We aim to define a new Riemann subsurface $X_n'$ and the charts $U_j'$ to lie in $X_n'$ and define a partial measured foliation of $X_n'$.  

Let $\gamma$ be a component of the boundary of $X_n$ and assume that $\gamma\cap U_j^n\neq\emptyset$ for at least one $U_j^n$. Each $U_j^n$ has a product structure from the Implicit Function Theorem. We add finitely many closed Jordan domains to the domains  $U_j^n$ that intersect $\gamma$ such that $\gamma$ is covered by the finitely many charts. The added domains are chosen small enough so that they do not intersect $X_{n-1}$, and we choose a product structure on them with the requirement that it is consistent on the overlaps. We choose the notion of vertical arcs based on a fixed product structure in each added chart.
Then there exists a simple closed step curve $\gamma'$ homotopic to $\gamma$ contained in the union of the charts covering $\gamma$, where a step curve consists of finitely many horizontal and vertical arcs. The proof of this fact is the same as for holomorphic quadratic differentials in Strebel \cite[Theorem 24.1]{Strebel}.

We define the subsurface $X_n'$ and a partial measured foliation $\mathcal{F}_n$ on $X_n'$ as follows.
\begin{definition}
\label{def:X_n'}
Let $\{\gamma_1,\ldots ,\gamma_{k_n}\}$ be the boundary geodesics of $X_n$ and $\{\gamma_1',\ldots ,\gamma_{k_n}'\}$ the step curves obtained using the above construction. 
Then $X_n'$ is the finite-area component of $X\setminus\{ \gamma_1',\ldots ,\gamma_{k_n}'\}$. \end{definition}


The bordered Riemann surfaces $X_n$ and $X_n'$ are homotopic in $X$.
The family of closed Jordan domains $\{ U_i\}$ is locally finite in $X$ by the Definition \ref{def:partial-fol}. Therefore, each $U_i$ is entirely contained in $X_n$ for all $n$ large enough depending on $i$ and $\{ X_n'\}_n$ is exhaustion of  $X$ by finite area Riemann subsurfaces. 

The set of connected components of the intersection of each $U_j^n$ with $\cup_{i=1}^{k_ n}\gamma_{i}'$ consists of step curves. There are only finitely many components of the intersection because the horizontal or vertical arcs of the set $\cup_{i=1}^{k_ n}\gamma_{i}'$ are on a definite distance from each other, and a single horizontal arc or a single vertical arc can intersect any of the Jordan domains covering $\gamma$ at most finitely many times. Since any infinite set in $U_j^n$ would converge in the closure $\bar{U}_j^n$ we cannot have infinitely components. 

The components of $U_j^n\cap (\cup_{i=1}^{k_ n}\gamma_{i}')$ divide $U_j^n$ into finitely many component charts and we additionally divide each component chart into finitely many charts by drawing horizontal lines of the division for each horizontal arc in a component of $U_j^n\cap (\cup_{i=1}^{k_ n}\gamma_{i}')$. We discard the components of $U_j^n\setminus (\cup_{i=1}^{k_ n}\gamma_{i}')$ which are not in $X_n'$.

Denote by $\{ U_h'\}_h$ the family of all the above components that lie in $X_n'$, including $U_j^n$ that are entirely contained in $X_n$. Denote by $E_h'\subset U_h'$ the sets obtained by the restrictions of the sets from the family $\{ E_j^n\}_j$.
  
 \begin{definition}
 \label{def:F_n} 
 The partial measured foliation $\mathcal{F}_n$ on $X_n'$ is given by the restriction of the functions $\{v_i\}_i$ to the sets $\{U_h'\}_h$ denoted by $\{ v_h'\}_h$, where the local leaves are given by the family $\{ E_h'\}_h$.
\end{definition}

The Riemann surface $X_n'$ is obtained by truncating $X$ along finitely many  simple closed step curves $\gamma_i'$ corresponding to the boundary geodesics of $X_n$. We use conformal maps of neighborhoods of the boundary points of $X_n'$ to the half-disks in the closure of $\mathbb{H}$ with the diameter on $\mathbb{R}$ that map the boundary points to the diameter. These conformal charts of the boundary make $X_n'$ into a bordered Riemann surface, and we form the double the Riemann surface $X_n'$ across its boundary, called $\widehat{X}_n'$.

Let  $\widehat{\mathcal{F}}_n$ be the partial measured foliation on $\widehat{X}_n'$ which equals 
$\mathcal{F}_n$ on $X_n'$ and the mirror image of $\mathcal{F}_n$ on $\widehat{X}_n'\setminus X_n'$. Each horizontal leaf of $\mathcal{F}_n$ 
that accumulates to the boundary of $X_n'$ in a given direction necessarily has a unique endpoint 
on the boundary of $X_n'$ by our construction. This property was the reason for introducing the new surface $X_n'$.

 If a horizontal leaf in $X_n'$ accumulates to the boundary of $X_n'$ 
 at both ends, it is then continued by its mirror image to make a closed leaf in $\widehat{X}_n'$.
Therefore $\widehat{\mathcal{F}}_n$  has closed leaves as well as non-closed leaves that are coming from the non-closed leaves of $\mathcal{F}$ contained in $X_n'$ as well as components of leaves of $\mathcal{F}$ in $X_n'$ with one ray accumulating to the boundary of $X_n'$ and the opposite ray accumulating in the interior $X_n'$. The leaves on $\widehat{\mathcal{F}}_n$ are differentiable curves away from the boundary cuffs of $X_n$ and possibly the boundaries of the Jordan domain charts for $\mathcal{F}$. 

\begin{lem}
\label{lem:proper}
The partial measured foliation $\widehat{\mathcal{F}}_n$ on the Riemann surface $\widehat{X}_n'$ is proper.
\end{lem}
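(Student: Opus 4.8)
The plan is to verify directly the two defining conditions of a proper foliation for $\widehat{\mathcal{F}}_n$ on the double $\widehat{X}_n'=\mathbb{H}/\widehat{\Gamma}_n$: that all but countably many of its horizontal trajectories are non-singular, and that the lift to $\mathbb{H}$ of each such trajectory accumulates in each direction to a single ideal point of $\partial\mathbb{H}$, these two ideal points being distinct. Since $X_n'$ has finite hyperbolic area, so does $\widehat{X}_n'$, whence $\widehat{\Gamma}_n$ is of the first kind; this is what lets one speak of accumulation at well-defined ideal points on each end.

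First I would dispose of the singular trajectories. A trajectory of $\widehat{\mathcal{F}}_n$ can limit to an interior point of $\widehat{X}_n'$ only at a point where the local product structure fails, namely a prong, a corner of one of the charts $U_h'$, or a point where a chart boundary meets the reflection locus. The family $\{U_h'\}_h$ is obtained from the locally finite family $\{U_i\}_i$ by finite subdivisions, and $\widehat{X}_n'$ has finite area, so there are at most countably many such points, and the prong structure permits only finitely many trajectories to limit to each. Hence the singular trajectories form a countable set, exactly the exceptional set allowed by the definition.

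Next I would analyze the non-singular trajectories according to the three types recorded before the statement. For a closed leaf---produced from a leaf of $\mathcal{F}$ meeting $\partial X_n'$ at both ends and completed by its mirror image---its lift is a $\widehat{\Gamma}_n$-invariant arc joining the two fixed points of the corresponding isometry, which are distinct as soon as the leaf is essential and non-peripheral. I would derive essentiality from the construction: leaves homotopic into a single boundary component were erased, so any surviving leaf meeting $\partial X_n'$ at both ends crosses the reflection locus essentially, and its doubled completion is homotopically nontrivial and not peripheral to a cusp. For an open non-singular leaf---coming from a non-closed interior leaf of $\mathcal{F}$, or from a leaf with one ray meeting $\partial X_n'$ and continued by its mirror image---I would invoke a Poincar\'e--Bendixson description of each of its two rays in the finite-area surface $\widehat{X}_n'$: such a ray escapes into a cusp, spirals onto a closed leaf, or enters a minimal set, and in each case its lift is eventually trapped in a region of $\mathbb{H}$ shrinking to one boundary point, so it accumulates at a single ideal point; the finiteness of the transverse measure forbids the lift from oscillating between two distinct ideal points.

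The distinctness of the two endpoints is where I expect the main difficulty. If the lift of a non-singular leaf accumulated to the same ideal point at both ends, then that lift together with the boundary of $\mathbb{H}$ at that point would bound a region carrying zero transverse measure, which forces the leaf to be null-homotopic or to limit to a point, contradicting that it is essential and non-singular. Making this Poincar\'e--Bendixson analysis precise in the partial-foliation setting, and in particular controlling the way type-(C) leaves bounce between the two copies of $X_n'$ across the reflection locus, is the delicate step; granting it, the properness of $\widehat{\mathcal{F}}_n$ follows.
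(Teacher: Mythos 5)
There is a genuine gap, and it sits exactly where you flagged the ``delicate step.'' The accumulation of interior rays at a single ideal point is not something you need to (or can) derive from a Poincar\'e--Bendixson analysis: a partial measured foliation in the sense of Definition \ref{def:partial-fol} has no Strebel-type trajectory structure theorem (the charts $U_i$ need not cover the surface, the local leaves live only on measurable strips $E_i$, and nothing forces a ray to spiral onto a closed leaf or enter a minimal set). What makes the lemma work is that $\mathcal{F}$ is \emph{assumed} proper on $X$ from the outset: every non-singular leaf of $\mathcal{F}$ already lifts to an arc with two distinct ideal endpoints. The paper's proof simply inherits this. A leaf of $\widehat{\mathcal{F}}_n$ contained in the interior of $X_n'$ (or of its mirror) is a leaf of $\mathcal{F}$, so there is nothing to prove; and for a leaf with one ray hitting $\partial X_n'$ and the other accumulating in the interior, one takes a lift $\tilde{\mathfrak{f}}$ in a component $\tilde{X}_n^c$ of $\pi^{-1}(X_n')$, notes that the interior ray has an ideal endpoint by properness of $\mathcal{F}$, and observes that the continuation of $\tilde{\mathfrak{f}}$ is the reflection of such a ray in the boundary geodesic $g$ of $\tilde{X}_n^c$, hence also has an ideal endpoint. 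Your proposal never invokes the properness of $\mathcal{F}$, and the substitute you propose cannot be made precise in this generality.

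Two smaller points. For the closed leaves your conclusion is right, but ``essential and non-peripheral'' needs the case analysis the paper carries out: a surviving arc meeting $\partial X_n'$ at both ends either joins two distinct boundary components, or returns to the same component while essentially crossing an interior closed geodesic, or cuts off a puncture; in each case the doubled curve essentially intersects an explicit curve (a boundary geodesic, an interior geodesic, or separates a puncture together with its mirror image), which is what yields two distinct fixed points for the lift. Finally, your argument for distinctness of the two ideal endpoints (``a region carrying zero transverse measure forces the leaf to be null-homotopic'') is not justified as stated --- a partial foliation can perfectly well have regions of zero transverse measure off its support --- whereas the essential-intersection argument above gives distinctness directly.
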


\begin{proof}
We need to prove that each non-singular leaf $\mathfrak{f}$ of $\widehat{\mathcal{F}}_n$ lifts to a leaf with precisely two distinct endpoints on the ideal boundary of the universal cover of $\widehat{X}_n'$. The leaf $\mathfrak{f}$ is either completely contained in the interior of $X_n'$ or the interior of $\widehat{X}_n'\setminus X_n'$, or it is closed, or it is open and has accumulation points in both $X_n'$ and $\widehat{X}_n'\setminus X_n'$.

If $\mathfrak{f}$ is a leaf which is contained in the interior of $X_n'$ or the interior of $\widehat{X}_n'\setminus X_n'$, then its lift to the universal cover of $\widehat{X}_n'$ has precisely two distinct accumulation points on the ideal boundary because of the corresponding assumption on the leaves of $\mathcal{F}$. 

If $\mathfrak{f}$ is a closed leaf not in $X_n'$ nor in $\widehat{X}_n'\setminus X_n'$, then $\mathfrak{f}\cap X_n'$ is not homotopic to a boundary component of $X_n'$ relative to its endpoints by the construction of $\widehat{\mathcal{F}}_n$. Therefore $\mathfrak{f}\cap X_n'$ either connects two different boundary components of $X_n'$, or it joins the same boundary component while essentially intersecting some closed geodesic in the interior of $X_n'$, or it separates at least one puncture of $X_n'$ from the rest of $X_n'$. In the first case, the leaf $\mathfrak{f}$ essentially intersects the two boundary components of $X_n'$ that contain the endpoints of $\mathfrak{f}\cap X_n'$ and therefore $\mathfrak{f}$ lifts to a leaf with two distinct endpoints on the ideal boundary of the universal cover of $\widehat{X}_n'$. In the second case, since $\mathfrak{f}\cap X_n'$ essentially intersects a closed geodesic in the interior of $X_n'$, we conclude that $\mathfrak{f}$ also essentially intersects the same geodesic. Therefore the lift has two distinguished endpoints in this case as well. In the third case, the leaf $\mathfrak{f}$ separates the puncture mentioned above and its mirror image from the rest of the surface $\widehat{X}_n'$, and its lift has two distinguished ideal endpoints. 

The last case to consider is when $\mathfrak{f}$ has accumulation points in both $X_n'$ and $\widehat{X}_n'\setminus X_n'$. One ray of $\mathfrak{f}\cap X_n'$ accumulates to a point on the boundary of $X_n'$, and the opposite ray has accumulation points in the interior of $X_n'$. Let $\pi :\mathbb{H}\to\widehat{X}_n'$ be the universal covering and fix a component $\tilde{X}_n^c$ of $\pi^{-1}(X_n')$. Let $\tilde{\mathfrak{f}}$ be a single lift of $\mathfrak{f}\cap X_n'$ to $\tilde{X}_n^c$. Let $g$ be a geodesic on the boundary of $\tilde{X}_n^c$ that $\tilde{\mathfrak{f}}$ meets. The opposite ray of $\tilde{\mathfrak{f}}$ accumulates to a point in the ideal boundary of $\mathbb{H}$ by the corresponding assumption on $\mathcal{F}$. The reflection of $\tilde{X}_n^c$ in $g$ is a component of $\pi^{-1}(\widehat{X}_n'\setminus X_n')$. The reflection of the trajectory $\mathfrak{f}$ continues $\mathfrak{f}$ in the component of  $\pi^{-1}(\widehat{X}_n'\setminus X_n')$ to make a single lift of $\mathfrak{f}$ in $\mathbb{H}$. It is obvious that the reflection also has an endpoint on the ideal boundary of $\mathbb{H}$. Therefore each component of $\pi^{-1}(\mathfrak{f} )$ has two accumulation points as desired.
\end{proof}

Since the lift of each leaf in $\widehat{\mathcal{F}}_n$ has two endpoints on the ideal boundary of $\mathbb{H}$, the homotopy class of the partial foliation $\widehat{\mathcal{F}}_n$ on $\widehat{X}_n'$ induces a measured geodesic lamination $\mu_n$ on $\widehat{X}_n'$ which is invariant under the reflection in the boundary of $X_n'$. Then there is a measured foliation $\widehat{\mathcal{F}}_{\mu_n}$ on $\widehat{X}_n'$ whose straitghening  is the measured lamination $\mu_n$ (see \cite{Levitt}). By Hubbard-Masur \cite{HubbardMasur} or Kerckhoff \cite{Kerckhoff}, there is a unique holomorphic quadratic differential $\widehat{\varphi}_n$ whose horizontal foliation has the heights equal to the heights of $\widehat{\mathcal{F}}_{\mu_n}$. Let $\varphi_n$ be a holomorphic quadratic differential on $X_n'$ which is obtained by the restriction of $\widehat{\varphi}_n$ to $X_n'$.
 
\begin{lem}
\label{lem:lim-not-zero}
Let $X_n'$ be a finite area surface in the exhaustion of $X$ and $\varphi_n$ a holomorphic quadratic differential defined by the above process using the integrable partial foliation $\mathcal{F}$ on $X$. Then there exists a subsequence $\varphi_{n_k}$ of $\varphi_n$ that converges uniformly on compact sets to a non-zero integrable holomorphic quadratic differential $\varphi$ on $X$.
\end{lem}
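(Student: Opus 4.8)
The plan is to obtain $\varphi$ as a subsequential limit via a normal-families argument, for which the decisive input is a uniform bound on the areas $\int_{X_n'}|\varphi_n|$, and then to exclude the degenerate limit $\varphi\equiv 0$ by a length--area estimate anchored to a single geodesic of positive height. First I would establish the uniform area bound. The differential $\widehat{\varphi}_n$ is the Hubbard--Masur differential on $\widehat{X}_n'$ whose horizontal foliation has the same heights as $\widehat{\mathcal{F}}_{\mu_n}$, hence as $\widehat{\mathcal{F}}_n$. By the minimal--norm (Dirichlet--principle) characterization of the Hubbard--Masur differential, $\int_{\widehat{X}_n'}|\widehat{\varphi}_n|$ is the infimum of the Dirichlet integral over all foliations with these heights, so
$$
\int_{\widehat{X}_n'}|\widehat{\varphi}_n|\le D_{\widehat{X}_n'}(\widehat{\mathcal{F}}_n)=2\,D_{X_n'}(\mathcal{F}_n).
$$
Since $\mathcal{F}_n$ is obtained from $\mathcal{F}$ by restricting to $X_n'$ and erasing leaves, its Dirichlet integral satisfies $D_{X_n'}(\mathcal{F}_n)\le C\,D(\mathcal{F})$ with $C$ independent of $n$ and $D(\mathcal{F})<\infty$ by integrability. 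Because $\mu_n$ is reflection--invariant, uniqueness in Hubbard--Masur forces $\widehat{\varphi}_n$ to be reflection--symmetric, whence
$$
\int_{X_n'}|\varphi_n|=\tfrac12\int_{\widehat{X}_n'}|\widehat{\varphi}_n|\le C\,D(\mathcal{F})
$$
uniformly in $n$.

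Next I would run the normal--family argument. Fix a relatively compact coordinate chart $K'\subset X$; for all $n$ large $\overline{K'}\subset X_n'$, so the holomorphic coefficients of $\varphi_n$ obey $\int_{K'}|\varphi_n|\le C\,D(\mathcal{F})$. As $|\varphi_n|$ is subharmonic, the sub--mean--value inequality upgrades this $L^1$ bound to a uniform pointwise bound on any compact $K$ with $\overline{K}\subset K'$, and Montel's theorem supplies a subsequence converging uniformly on $K$. A diagonal argument over an exhaustion of $X$ by such charts produces a subsequence $\varphi_{n_k}$ converging uniformly on compact subsets of $X$ to a holomorphic quadratic differential $\varphi$. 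Uniform convergence on each compact set together with the uniform area bound gives $\int_X|\varphi|\le C\,D(\mathcal{F})<\infty$, so $\varphi\in A(X)$.

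The hard part will be showing $\varphi\not\equiv 0$, since a priori the mass of $\varphi_n$ could escape towards the ends of $X$. Here I would use that $\mu=\mu_{\mathcal{F}}\neq 0$, so there is a simple closed geodesic $\gamma$ on $X$ with $i(\mu,\gamma)>0$. Fix a conformal annular neighborhood $A$ of $\gamma$ in $X$ of modulus $M=\mathrm{Mod}(A)>0$; for $n$ large $\overline{A}\subset X_n'\subset\widehat{X}_n'$, and since $\gamma$ is a fixed interior geodesic while the modification defining $\mathcal{F}_n$ only erases leaves homotopic to the boundary, the heights stabilize: $i(\mu_n,\gamma)=h_{\widehat{\varphi}_n}(\gamma)=i(\mu,\gamma)$ for all large $n$. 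Every curve $\gamma_1\subset A$ homotopic to $\gamma$ satisfies $\int_{\gamma_1}|\sqrt{\varphi_n}|\ge\int_{\gamma_1}|\mathrm{Im}\sqrt{\varphi_n}|\ge i(\mu_n,\gamma)$, so using the metric $|\sqrt{\varphi_n}|$ in the definition of the extremal length of the core of $A$ yields
$$
\frac{1}{M}=\mathrm{Ext}_A(\gamma)\ge\frac{i(\mu_n,\gamma)^2}{\int_A|\varphi_n|},\qquad\text{hence}\qquad \int_A|\varphi_n|\ge M\,i(\mu_n,\gamma)^2.
$$
The right--hand side is bounded below by the positive constant $M\,i(\mu,\gamma)^2$ for large $n$. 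As $\varphi_{n_k}\to\varphi$ uniformly on the fixed compact set $\overline{A}$, we conclude $\int_A|\varphi|=\lim_k\int_A|\varphi_{n_k}|\ge M\,i(\mu,\gamma)^2>0$, so $\varphi\not\equiv 0$, which completes the argument.
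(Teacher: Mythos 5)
Your overall architecture matches the paper's: a uniform area bound via the minimal-norm property of the Hubbard--Masur differential (the paper cites Bourque's Theorem 7.5 for $\int_{\widehat{X}_n'}|\widehat{\varphi}_n|\leq D_{\widehat{X}_n'}(\widehat{\mathcal{F}}_n)=2D_{X_n'}(\mathcal{F}_n)\leq 2D_X(\mathcal{F})$), then normal families, then a lower bound on the mass of the limit anchored at a single closed geodesic $\gamma$ with $i(\mu,\gamma)>0$. Your length--area estimate on a fixed annulus $A$ is a legitimate (and arguably cleaner) alternative to the paper's direct transfer of heights via locally uniform convergence, and would close the argument \emph{if} you had the input it needs.

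The genuine gap is the sentence ``since $\gamma$ is a fixed interior geodesic while the modification defining $\mathcal{F}_n$ only erases leaves homotopic to the boundary, the heights stabilize: $i(\mu_n,\gamma)=i(\mu,\gamma)$ for all large $n$.'' This is unjustified, and it is exactly where the paper spends most of its proof. Two things obstruct it. First, $h_{\mathcal{F},X_n'}(\gamma)$ is an infimum over competitors confined to $X_n'$, hence only satisfies $h_{\mathcal{F},X_n'}(\gamma)\geq h_{\mathcal{F}}(\gamma)$, with no reason for equality at finite $n$. Second, and more seriously, erasing the leaf-components that are boundary-parallel rel endpoints can \emph{decrease} the height: such a component can dip deep into $X_n'$ and cross $\gamma$ geometrically, and the union $\{C_j\}_j$ of these erased strips may be impossible for a competitor $\gamma_1\simeq\gamma$ to avoid entirely, since there are countably many of them arranged in nested families. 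The paper handles this by rerouting $\gamma_1$ along chosen leaves $\ell_k\subset C_j$ so that the total contribution of the erased strips to $\int_{\gamma_1'}|dv|$ is below $\sum_k\epsilon/2^k=\epsilon$, yielding only the approximate inequality $h_{\mathcal{F}}(\gamma)-\epsilon\leq h_{\mathcal{F}_n}(\gamma)=h_{\varphi_n}(\gamma)$ --- which is all that is true, but also all that is needed. Your proof becomes correct if you replace the exact stabilization claim by this $\epsilon$-approximate lower bound (your annulus estimate then gives $\int_A|\varphi|\geq M(i(\mu,\gamma)-\epsilon)^2$ for every $\epsilon>0$, which still forces $\varphi\not\equiv 0$); but as written, the assertion skips the central difficulty of the lemma.
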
 

\begin{proof}
 By the definition, the heights of $\widehat{\varphi}_n$ and $\widehat{\mathcal{F}}_n$ are equal. We have that
$$
\int_{\widehat{X}_n'}|\widehat{\varphi}_n(z)|dxdy\leq D_{\widehat{X}_n'}(\widehat{\mathcal{F}}_n)
$$
by \cite[Theorem 7.5]{Bourque}. The key idea is to use the decomposition of the surface into strips, cylinders, and minimal components of the vertical foliation of $\varphi$ and then to use the length-area argument (see also \cite[Theorem 3.2]{MardenStrebel1}).
Since the argument uses integration over $\widehat{X}_n$, it is enough to assume that the partial foliation is differentiable off a countable union of differentiable arcs.
By the mirror symmetry of $\widehat{X}_n'$ and $\widehat{\mathcal{F}}_n$, we have that $D_{\widehat{X}_n'}(\widehat{\mathcal{F}}_n)=2D_{{X}_n'}(\mathcal{F}_n)$.
Since we obtained $\mathcal{F}_n$ by restricting the leaves of $\mathcal{F}$ to $X_n'$ and then erasing some of them, we have the inequality 
$ D_{{X}_n'}(\mathcal{F}_n)\leq D_X(\mathcal{F}).$ Finally, for all $n$, we obtain
$$
\int_{\widehat{X}_n'}|\widehat{\varphi}_n(z)|dxdy\leq 2D_{X}({\mathcal{F}})
$$

It follows that $\varphi_n:=\widehat{\varphi}_n|_{X_n'}$ has a subsequence that  converges uniformly on compact subsets of $X$ to an integrable holomorphic quadratic differential $\varphi$ with 
$$
\int_X|\varphi (z)|dxdy\leq 2D_{X}(\mathcal{F}).
$$

It remains to prove that $\varphi$ is not identically equal to zero. Let $\gamma$ be a simple closed geodesic of $X$ such that $h_{\mathcal{F}}(\gamma )=i(\mu ,\gamma )>0$. Given that the height is  the infimum over all closed curves homotopic to $\gamma$ in $X$, we immediately have
\begin{equation}
\label{eq:ineq-n}
h_{\mathcal{F}}(\gamma )\leq h_{\mathcal{F}, X_n'}(\gamma ).
\end{equation}
Since the partial foliation $\mathcal{F}_n$ on $X_n'$ is obtained by erasing the leaves of $\mathcal{F}$ that are homotopic to the boundary relative to their endpoints, $h_{\mathcal{F},X_n'}(\gamma )$ may be larger than $h_{\mathcal{F}_n}(\gamma )$. 

We first prove that the sequence  $h_{\mathcal{F}_n}(\gamma )$ has a  positive lower bound. Let $\gamma_1$ be a curve in $X_n'$ homotopic to $\gamma$. Then we have $\int_{\gamma_1\cap\mathcal{F}}|dv|\geq h_{\mathcal{F}}(\gamma )$, where $|dv|$ is the $1$-form corresponding to $\mathcal{F}$. Consider the set of all leaves of $\mathcal{F}\cap X_n'$ that can be homotoped to a boundary component relative to their endpoints. This set can be written as an at most countable union $\{ C_j\}_j$ of relatively open strips of leaves. Given $\epsilon >0$, we will show that $\gamma_1$ can be modified such that 
$$
\sum_j\int_{\gamma_1\cap\mathcal{F}\cap C_j}|dv|<\epsilon .
$$
Indeed, the complement in $X_n'$ of each $C_j$ has two components: the {\it inside} component, which is connected and can be homotoped to $\partial X_n'$, and the {\it outside} component, which is not simply connected (see Figure 1). Let $\tilde{C}_j$ be the union of $C_j$ and the inside component of $C_j$. The intersection of $\gamma_1$ with $\cup\tilde{C}_j$  has at most countably many components $\{\gamma_1^k\}_k$. If $\gamma_1^k\subset\tilde{C}_j$, we choose a leaf $\ell_k\subset C_j$ such that the integral over $\gamma_1^k\cap O_k$ is at most $\epsilon /2^{k}$, where $O_k$ is the outside region of  $C_j\setminus\ell_k$. Denote by $I_k$ the inside region $C_j\setminus\ell_k$. We replace each component of
$\gamma_1^k\cap I_k$ with an arc on $\ell_k$ having the same endpoints as the corresponding component of $\gamma_1^k\cap I_k$ (see Figure 1). We perform this process for each component $\gamma_1^k$ to obtain the new curve $\gamma_1'$ homotopic to $\gamma_1$. By the above choices, the total contribution to $\int_{\gamma_1'}|dv|$ of the leaves of $\mathcal{F}\setminus\mathcal{F}_n$ is less than $\sum_{k=1}^{\infty}\epsilon /2^k=\epsilon$. 

\begin{figure}[h]
\label{fig:approx-height}
\leavevmode \SetLabels
\L(.43*.0) $\mathcal{F}$\\
\L(.42*.62) $\gamma_1'$\\
\L(.4*.77) $\mathcal{F}_n$\\
\L(.3*.56) $\gamma_1$\\
\L(.2*.33) $\partial X_n'$\\
\L(.23*.6) $X_n'$\\
\endSetLabels
\begin{center}
\AffixLabels{\centerline{\epsfig{file =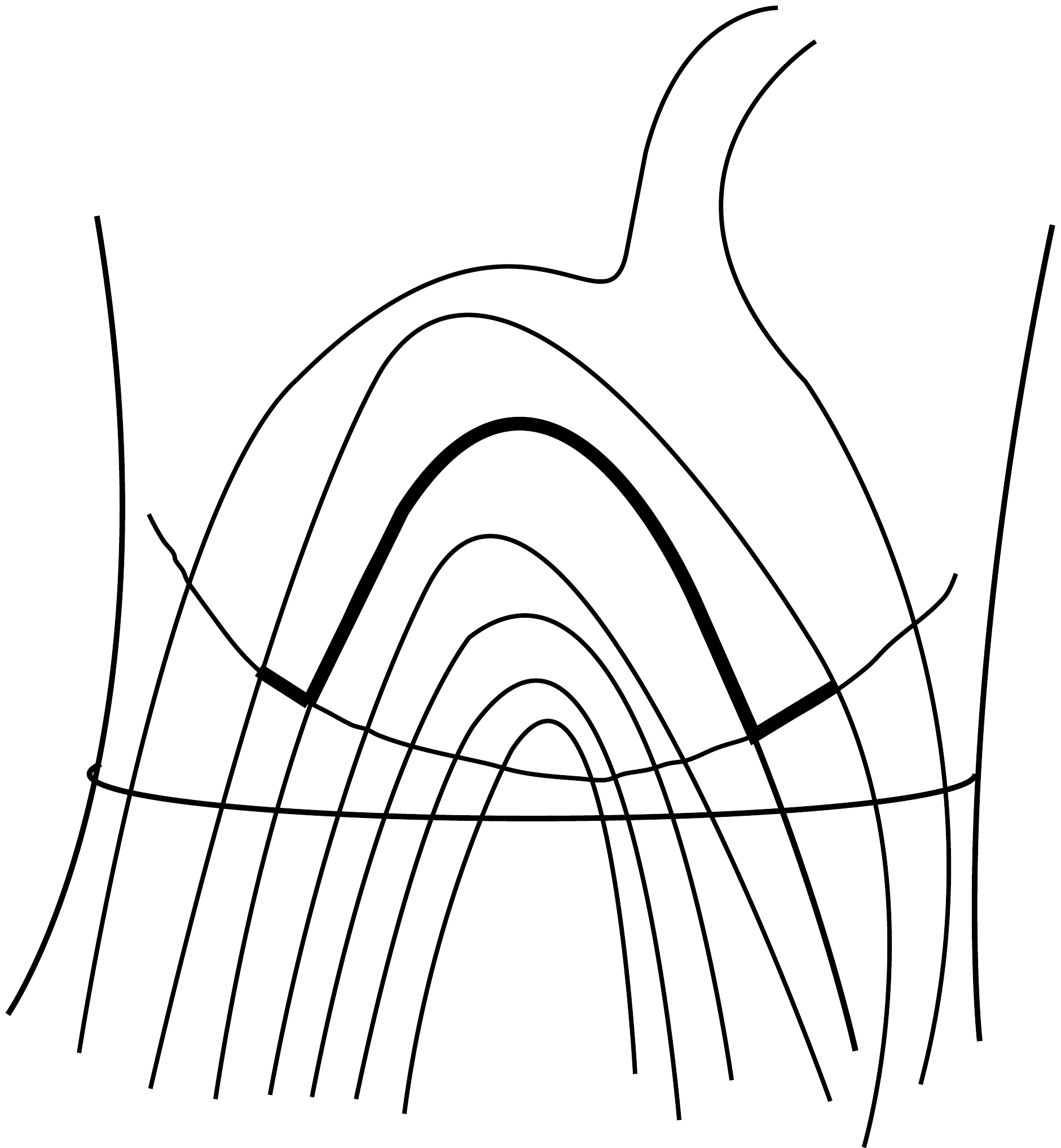,width=7.0cm,angle=0} }}
\vspace{-20pt}
\end{center}
\caption{Changing $\gamma_1$ to $\gamma_1'$. The bold line is the part that is changed.}
\end{figure}

If we denote by $|dv^n|$ the $1$-form for $\mathcal{F}_n$, the above proves that for each $\gamma_1\subset X_n'$ in the homotopy class of $\gamma$ there is $\gamma_1'$ also in the homotopy class of $\gamma$ such that
\begin{equation}
\label{eq:ineq-first}
\int_{\gamma_1'\cap\mathcal{F}}|dv|\leq\int_{\gamma_1'\cap\mathcal{F}}|dv^n|+\epsilon \leq\int_{\gamma_1\cap\mathcal{F}}|dv^n| +\epsilon .
\end{equation}
Since (\ref{eq:ineq-first}) holds for all $\gamma_1$ in $X_n'$,
 (\ref{eq:ineq-n}) gives
\begin{equation}
\label{eq:height-ineq}
h_{\mathcal{F}}(\gamma )-\epsilon\leq h_{\mathcal{F}_n}(\gamma ) .
\end{equation}
The partial foliation $\widehat{\mathcal{F}}_n$  is invariant under the reflection in the boundary of $X_n'$
which implies that the holomorphic quadratic differential $\widehat{\varphi}_n$  realizing  $\widehat{\mathcal{F}}_n$  is also invariant under the reflection in the boundary of $X_n'$. It follows that 
\begin{equation}
\label{eq;height-symm}
h_{\mathcal{F}_n}(\gamma )=h_{\widehat{\mathcal{F}}_n}(\gamma )=h_{\widehat{\varphi}_n}(\gamma )=h_{\varphi_n}(\gamma ).
\end{equation}

Fix $\gamma_0\subset X$ homotopic to $\gamma$. Since $\varphi_n\to\varphi$ as $n\to\infty$ uniformly on compact subsets of $X$, there exists $n_0$ such that 
\begin{equation}
\label{eq:lim-app}
h_{\varphi_n}(\gamma )\leq \int_{\gamma_0}|Im(\sqrt{\varphi (z)}dz)|+\epsilon
\end{equation}
for all $n\geq n_0$.

By (\ref{eq:height-ineq}), (\ref{eq;height-symm}) and (\ref{eq:lim-app}) we obtain
\begin{equation}
\label{eq:hol-partial}
\int_{\gamma_0}|Im(\sqrt{\varphi (z)}dz)|\geq h_{\mathcal{F}}(\gamma )-2\epsilon .
\end{equation}
When we choose $\epsilon <\frac{1}{2}h_{\mathcal{F}}(\gamma )$
we conclude that $\varphi$ is not identically equal to zero.
\end{proof}

\subsection{The heights of the limiting quadratic differential}
\label{sec:limit}
In the previous subsection, we constructed a sequence of finite area surfaces $X_n'$ which approximate $X$ and constructed a sequence of holomorphic quadratic differentials $\varphi_n$ on $X_n'$ whose horizontal foliations are homotopic to a sub-foliation of the restriction of the foliation $\mathcal{F}$ to $X_n'$. In Lemma \ref{lem:lim-not-zero}, we established that there is a subsequence $\varphi_{n_k}$ which converges uniformly on compact subsets to a non-trivial integrable holomorphic quadratic differential $\varphi$ on $X$. It remains to prove that the horizontal foliation of $\varphi$ is equivalent to $\mathcal{F}$.

By \cite[Lemma 3.3 and Theorem 4.1]{Saric20}, to prove that the horizontal foliation of $\varphi$ induces $\mu$ it is enough to prove that $h_{\varphi}(\gamma )=i(\mu ,\gamma )$ for all simple closed geodesics $\gamma$ on $X$. This is equivalent to $h_{\varphi}(\gamma )=h_{\mathcal{F}}(\gamma )$  for all simple closed geodesics $\gamma$ by the definition of $\mathcal{F}$. 

\begin{lem}
\label{lem:heights=}
Let $X=\mathbb{H}/\Gamma$ with $\Gamma$ of the first kind and let $\mathcal{F}$ be an integrable partial foliation on $X$. The integrable holomorphic quadratic differential $\varphi$ on $X$ constructed above satisfies
$$
h_{\varphi}(\gamma )=h_{\mathcal{F}}(\gamma )
$$
for all simple closed curves $\gamma$ on $X$.
\end{lem}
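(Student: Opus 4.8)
The plan is to prove $h_\varphi(\gamma)\ge h_{\mathcal{F}}(\gamma)$ and $h_\varphi(\gamma)\le h_{\mathcal{F}}(\gamma)$ separately, both organized around the limit relation $\lim_{n}h_{\varphi_n}(\gamma)=h_{\mathcal{F}}(\gamma)$. First I record this limit. Letting $\epsilon\to 0$ in (\ref{eq:height-ineq}) and using (\ref{eq;height-symm}) gives $\liminf_n h_{\varphi_n}(\gamma)\ge h_{\mathcal{F}}(\gamma)$. Conversely, since $\mathcal{F}_n$ is obtained from $\mathcal{F}|_{X_n'}$ by erasing leaves, one has $h_{\varphi_n}(\gamma)=h_{\mathcal{F}_n}(\gamma)\le h_{\mathcal{F},X_n'}(\gamma)$, and because any compact representative of $\gamma$ eventually lies in some $X_n'$, the numbers $h_{\mathcal{F},X_n'}(\gamma)$ decrease to $h_{\mathcal{F}}(\gamma)$ (the reverse inequality being (\ref{eq:ineq-n})); hence $\limsup_n h_{\varphi_n}(\gamma)\le h_{\mathcal{F}}(\gamma)$ and the limit equals $h_{\mathcal{F}}(\gamma)$. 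The inequality $h_\varphi(\gamma)\ge h_{\mathcal{F}}(\gamma)$ is then already contained in Lemma \ref{lem:lim-not-zero}: estimate (\ref{eq:hol-partial}) holds for an \emph{arbitrary} representative $\gamma_0$ homotopic to $\gamma$, so taking the infimum over all such $\gamma_0$ and letting $\epsilon\to 0$ yields $h_\varphi(\gamma)\ge h_{\mathcal{F}}(\gamma)$.

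It remains to prove $h_\varphi(\gamma)\le h_{\mathcal{F}}(\gamma)$, which in view of the limit relation is the lower semicontinuity statement $h_\varphi(\gamma)\le\liminf_n h_{\varphi_n}(\gamma)$. For each $n$ I choose a curve $\beta_n\subset X_n'$ homotopic to $\gamma$ with $\int_{\beta_n}|Im(\sqrt{\varphi_n}\,dz)|\le h_{\varphi_n}(\gamma)+\frac1n$, so that $\int_{\beta_n}|Im(\sqrt{\varphi_n}\,dz)|\to h_{\mathcal{F}}(\gamma)$. If these near-minimizers can be taken inside a fixed compact set $K\subset X$ with uniformly bounded length, then, $\gamma$ being essential, their hyperbolic lengths are bounded below as well, and Arzel\`a--Ascoli produces a subsequence converging uniformly to a closed curve $\beta$ still homotopic to $\gamma$. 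The uniform convergence $\varphi_n\to\varphi$ on $K$ then gives $\int_{\beta}|Im(\sqrt{\varphi}\,dz)|=\lim_n\int_{\beta_n}|Im(\sqrt{\varphi_n}\,dz)|=h_{\mathcal{F}}(\gamma)$, and since $\beta$ is a competitor for $h_\varphi(\gamma)$ we conclude $h_\varphi(\gamma)\le h_{\mathcal{F}}(\gamma)$.

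The main obstacle is exactly this confinement: a priori the $\beta_n$ live on the growing subsurfaces $X_n'$ and could escape toward the ends of $X$ or degenerate. I would handle it by passing to the annular cover $A_\gamma=\mathbb{H}/\langle g_\gamma\rangle$ associated with $\gamma$, on which the lifts $\widehat\varphi_n$ converge to the lift of $\varphi$ uniformly on compact sets and on which $h_{\varphi_n}(\gamma)$ is the transverse measure assigned by $\widehat\varphi_n$ to a loop around the core. The vertical variation of a core-minimizer is bounded by $\sup_n h_{\varphi_n}(\gamma)<\infty$, while the $|\widehat\varphi_n|$-areas over a fundamental region are controlled by the uniform bound on $\int_{X_n'}|\varphi_n|$ from Lemma \ref{lem:lim-not-zero}; a length--area estimate then prevents a core-crossing minimizer from making long excursions into either end of $A_\gamma$, producing the desired fixed compact set and uniform length bound. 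I would avoid the alternative route through continuity of the straightening map $\varphi\mapsto\mu_\varphi$, because identifying $\mu_{\varphi_n}\to\mu_\varphi$ near $\gamma$ would require convergence of the ideal endpoints of the leaves crossing $\gamma$, a global datum not controlled by the mere uniform-on-compacts convergence of $\varphi_n$; the curve-compactness argument, by contrast, uses only the convergence actually available.
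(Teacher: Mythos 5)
Your first direction and your derivation of the limit relation $\lim_n h_{\varphi_n}(\gamma)=h_{\mathcal F}(\gamma)$ coincide with the paper's argument: the inequality $h_\varphi(\gamma)\ge h_{\mathcal F}(\gamma)$ is obtained exactly as you describe, by noting that (\ref{eq:hol-partial}) holds for every representative $\gamma_0$ and letting $\epsilon\to 0$. The divergence is in the second direction. The paper does not attempt to prove the lower semicontinuity $h_\varphi(\gamma)\le\liminf_k h_{\varphi_{n_k}}(\gamma)$ from scratch; it invokes Strebel's Theorem 24.7, which asserts $h_{\varphi_{n_k}}(\gamma)\to h_\varphi(\gamma)$ for a locally uniformly convergent sequence of integrable differentials with bounded norms, and then combines this with $h_{\varphi_{n_k}}(\gamma)=h_{\mathcal F_{n_k}}(\gamma)\le h_{\mathcal F,X_{n_k}'}(\gamma)\to h_{\mathcal F}(\gamma)$.

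Your replacement of that citation is where the gap lies. The confinement step --- producing near-minimizers $\beta_n$ inside a fixed compact set with uniformly bounded lengths --- is not established by the argument you sketch. Bounded vertical variation together with a bound on the $|\widehat\varphi_n|$-area of a fundamental domain does not prevent long excursions: a competitor can travel arbitrarily far into an end of $A_\gamma$ (or of $X_n'$) while accruing arbitrarily little additional vertical variation, precisely because $|Im(\sqrt{\varphi_n}\,dz)|$ vanishes along horizontal arcs, and horizontal trajectories may themselves run out to the ends; the area bound is compatible with such excursions taking place in long, thin, nearly horizontal strips. So the length--area estimate has nothing to push against, and in general the infimum defining $h_{\varphi_n}(\gamma)$ need not be approximated by curves in a fixed compact set in any way that is uniform in $n$. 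Two smaller points: the identification of $h_{\varphi_n}(\gamma)$ with the height of the core curve in the annular cover is itself a theorem for integrable differentials (Strebel, \S 20), not a formality; and even granting confinement and uniform convergence $\beta_n\to\beta$, uniform convergence of curves yields only $\int_\beta|Im(\sqrt\varphi\,dz)|\le\liminf_n\int_{\beta_n}|Im(\sqrt{\varphi_n}\,dz)|$ (lower semicontinuity of total variation), not the equality you assert --- fortunately that is the direction you need. The clean repair is to quote Strebel's Theorem 24.7, as the paper does, or to reproduce its proof, which handles exactly this escape phenomenon via the structure theory of trajectories rather than a direct compactness argument on near-minimizers.
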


\begin{proof}
 Since (\ref{eq:hol-partial}) holds for any $\epsilon >0$, by letting $\epsilon\to 0$ we get
 $$
 \int_{\gamma_0}|Im(\sqrt{\varphi (z)}dz)|\geq h_{\mathcal{F}}(\gamma ).
 $$
 By taking the infimum over all $\gamma_0$ homotopic to $\gamma$, we obtain
 $$
  h_{\varphi}(\gamma )\geq h_{\mathcal{F}}(\gamma ).
 $$

To obtain the opposite inequality, note that by \cite[Theorem 24.7]{Strebel} 
$$
\lim_{k\to\infty}h_{\varphi_{n_k}}(\gamma )=h_{\varphi}(\gamma )
$$
and by the definition
$$
\lim_{k\to\infty}h_{\mathcal{F},X_{n_k}}(\gamma )=h_{\mathcal{F}}(\gamma ).
$$

Since
$$
h_{\varphi_{n_k}}(\gamma )=h_{\mathcal{F}_{n_k}}(\gamma )\leq h_{\mathcal{F},X_{n_k}}(\gamma )
$$
for any closed curve $\gamma$ on $X$, by letting $k\to\infty$ in the above inequality, we get
$$
h_{\varphi}(\gamma )\leq h_{\mathcal{F}}(\gamma ).
$$ 
The lemma is established.
\end{proof}

\noindent {\it End of the proof of Theorem \ref{thm:main}.} Given an integrable partial foliation $\mathcal{F}$, we form a sequence of quadratic differentials $\varphi_{n}$ on the exhaustion $X_n'$ of $X$ as in \S \ref{sec:double}. Then Lemma \ref{lem:lim-not-zero} implies that there exists a subsequence $\varphi_{n_k}$ which converges uniformly on compact subsets to a non-trivial integrable holomorphic quadratic differential $\varphi$ on $X$. By Lemma \ref{lem:heights=}, the heights of $\varphi$ are equal to the heights of $\mathcal{F}$. Therefore the horizontal foliation of $\varphi$ realizes the measured lamination $\mu$. The quadratic differential $\varphi$ is unique by \cite[Theorem 5.3]{Saric-heights}. 

\subsection{The convergence in $A(X)$}
The space of integrable holomorphic quadratic differentials is equipped with the topology induced by the $L^1$-norms. It is helpful to describe this topology in terms of sequential convergence.

\begin{lem} 
\label{lem:L^1-conv}
A sequence $\varphi_n\in A(X)$ converges to $\varphi\in A(X)$ in the $L^1$-norm if and only if 
\begin{enumerate}
\item
$\varphi_n(z)$ converges to $\varphi (z)$ uniformly on compact subsets of $X$, and 
\item $\limsup_{n\to\infty}\int_X|\varphi_n|\leq \int_X|\varphi |$.
\end{enumerate}
\end{lem}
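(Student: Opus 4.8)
The plan is to prove the two implications separately, since they have quite different flavors. The forward direction (that $L^1$-convergence implies (1) and (2)) is essentially formal. Condition (2) is immediate from the reverse triangle inequality, $\bigl|\int_X|\varphi_n|-\int_X|\varphi|\bigr|\le\int_X|\varphi_n-\varphi|\to 0$, which in fact gives $\int_X|\varphi_n|\to\int_X|\varphi|$ and hence equality in (2). For condition (1) I would use that in any local chart $z$ the difference $\varphi_n-\varphi$ is represented by a holomorphic function, so $|\varphi_n-\varphi|$ is subharmonic there; the sub-mean-value inequality over a disk $D(z_0,r)$ contained in the chart gives
$$
|\varphi_n(z_0)-\varphi(z_0)|\le\frac{1}{\pi r^2}\int_{D(z_0,r)}|\varphi_n-\varphi|\,dxdy\le\frac{1}{\pi r^2}\int_X|\varphi_n-\varphi|.
$$
Covering a compact $K\subset X$ by finitely many such disks of a common radius makes this bound uniform over $K$, so $L^1$-convergence forces uniform convergence on $K$ and (1) follows.

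For the reverse direction I would argue by a Scheffé-type splitting that isolates the mass escaping toward the ends of $X$. Fix $\epsilon>0$. Since $\varphi$ is integrable, choose a compact set $K\subset X$ with $\int_{X\setminus K}|\varphi|<\epsilon$. On $K$ the uniform convergence (1) yields $\int_K|\varphi_n-\varphi|\to 0$, and in particular $\int_K|\varphi_n|\to\int_K|\varphi|$. On the complement I would estimate, using (2),
$$
\limsup_{n\to\infty}\int_{X\setminus K}|\varphi_n|=\limsup_{n\to\infty}\Bigl(\int_X|\varphi_n|-\int_K|\varphi_n|\Bigr)\le\int_X|\varphi|-\int_K|\varphi|=\int_{X\setminus K}|\varphi|<\epsilon.
$$
Combining these with the pointwise bound $\int_{X\setminus K}|\varphi_n-\varphi|\le\int_{X\setminus K}|\varphi_n|+\int_{X\setminus K}|\varphi|$ gives $\limsup_{n\to\infty}\int_X|\varphi_n-\varphi|\le 2\epsilon$, and letting $\epsilon\to 0$ completes the proof.

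The only genuine subtlety, and the reason hypothesis (2) cannot be dropped, is the control of mass near the ends of the infinite surface $X$: uniform convergence on compacta alone permits the $L^1$-mass of $\varphi_n$ to leak out to infinity, and it is precisely the inequality $\limsup_{n\to\infty}\int_X|\varphi_n|\le\int_X|\varphi|$ that forbids this. Thus the heart of the argument is the tail estimate on $X\setminus K$, where (2) is converted into uniform smallness of $\int_{X\setminus K}|\varphi_n|$. The subharmonicity step in the forward direction is standard but is exactly where holomorphicity (rather than mere integrability) of the differentials enters; everything else is bookkeeping with the triangle inequality and a compact exhaustion of $X$.
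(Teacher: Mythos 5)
Your proof is correct, and it differs from the paper's in the converse direction. The forward implication is the same in substance: the paper cites the Cauchy theorem where you use the sub-mean-value inequality for the subharmonic function $|\varphi_n-\varphi|$ on chart disks, and both arguments get (2) from the reverse triangle inequality $\bigl|\,|\varphi_n|-|\varphi|\,\bigr|\leq|\varphi_n-\varphi|$. For the implication (1) and (2) $\Rightarrow$ $L^1$-convergence, the paper applies the Dominated Convergence Theorem once to the auxiliary sequence $0\leq|\varphi-\varphi_n|+|\varphi|-|\varphi_n|\leq 2|\varphi|$, which tends to $0$ pointwise by (1); integrating and invoking (2) yields $0\leq\limsup_{n}\int_X|\varphi-\varphi_n|=\limsup_{n}\int_X|\varphi_n|-\int_X|\varphi|\leq 0$ in one line. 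You instead run the explicit Scheff\'e-type splitting: a compact set $K$ capturing all but $\epsilon$ of the mass of $|\varphi|$, uniform convergence on $K$, and the conversion of (2) into the tail bound $\limsup_{n}\int_{X\setminus K}|\varphi_n|\leq\int_{X\setminus K}|\varphi|<\epsilon$. The paper's route is shorter and buries the tightness issue inside the choice of dominating function; yours is longer but makes visible exactly why hypothesis (2) cannot be dropped on an infinite surface, namely that it forbids $L^1$-mass from escaping to the ends. Both arguments are complete; the small points to watch in yours (that $\int_K|\varphi_n-\varphi|\to 0$ uses the finiteness of the Lebesgue measure of a compact set in finitely many charts, and that $\limsup(a_n-b_n)\leq\limsup a_n-\lim b_n$ when $b_n$ converges) are handled correctly.
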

\begin{proof}
To prove the only if direction, note that the inequality $0\leq |\varphi -\varphi_n|+|\varphi |-|\varphi_n|\leq 2|\varphi |$ allows us to apply Lebesgue's Dominated Convergence Theorem to $|\varphi -\varphi_n|+|\varphi |-|\varphi_n|$ which gives $0\leq \lim_{n\to\infty}\int_X|\varphi -\varphi_n|=\lim_{n\to\infty}\int_X|\varphi_n|-\int_X|\varphi |\leq 0$. For the if direction, the uniform convergence on compact sets follows from the Cauchy Theorem for holomorphic function and the inequality $||\varphi |-|\varphi_n||\leq |\varphi -\varphi_n|$ implies $\int_X|\varphi_n|\to\int_X|\varphi |$ as $n\to\infty$. 
\end{proof}

By definition, we will consider the topology on $ML_{\mathrm{int}}(X)$ to be induced by the straightening map and the $L^1$-topology on $A(X)$. It seems to be difficult to recover the topology on $ML_{\mathrm{int}}(X)$ in terms of the heights and Dirichlet integrals alone since the Dirichlet integrals depend on the choice of the realizations of $\mu\in ML_{\mathrm{int}}(X)$. 

We can recover the topology of uniform convergence on compact sets in terms of the Dirichlet integrals of the realizations.

\begin{definition}
We say that $\lim_{n\to\infty}\mu_n=\mu$ for $\mu_n,\mu\in ML_{\mathrm{int}}(X)$ if there exist integrable partial foliations $\mathcal{F}_n, \mathcal{F}$ that realize $\mu_n,\mu$ such that
\begin{enumerate}
\item $\lim_{n\to\infty}h_{\mathcal{F}_n}(\gamma )=h_{\mathcal{F}}(\gamma )$ for each simple closed geodesic $\gamma$ in $X$, and
\item $D(\mathcal{F}_n)\leq M<\infty$ for all $n$.
\end{enumerate}
\end{definition}

\begin{prop}
\label{prop:unif_conv}
The inverse of the straightening map
$$
ML_{\mathrm{int}}(X)\to A(X)
$$ 
is continuous for the topology on $ML_{\mathrm{int}}(X)$ induced by the above definition and the topology of uniform convergence on compact sets on $A(X)$.
\end{prop}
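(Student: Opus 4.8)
The plan is to track the two differentials through the bijection of Theorem~\ref{thm:main}. Write $\varphi_n$ and $\varphi$ for the integrable holomorphic quadratic differentials with $\mu_{\varphi_n}=\mu_n$ and $\mu_{\varphi}=\mu$; the content of the proposition is then that $\varphi_n\to\varphi$ uniformly on compact subsets of $X$. The first step is to extract a uniform $L^1$ bound from hypothesis (2). Applying the construction of \S\ref{sec:double}--\ref{sec:limit} to the realization $\mathcal{F}_n$ produces a holomorphic quadratic differential with straightening $\mu_n$ and, by the estimate in the proof of Lemma~\ref{lem:lim-not-zero}, with $L^1$-norm at most $2D(\mathcal{F}_n)$; by the uniqueness in Theorem~\ref{thm:main} this differential is exactly $\varphi_n$. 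Hence
$$
\int_X|\varphi_n|\le 2D(\mathcal{F}_n)\le 2M
$$
for all $n$. Since $\{\varphi_n\}$ is a uniformly $L^1$-bounded family of functions that are holomorphic in each chart, the sub-mean-value property of $|\varphi_n|$ yields uniform pointwise bounds on every compact set, so $\{\varphi_n\}$ is a normal family in the topology of uniform convergence on compact sets.

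By normality it suffices to show that every locally uniformly convergent subsequence of $\{\varphi_n\}$ has limit $\varphi$; the full convergence $\varphi_n\to\varphi$ then follows by the usual subsequence principle. So let $\varphi_{n_k}\to\psi$ uniformly on compact subsets of $X$. Fatou's lemma together with the bound above shows $\psi\in A(X)$ with $\int_X|\psi|\le 2M$, so that $\mu_\psi\in ML_{\mathrm{int}}(X)$ is defined. Because the straightening map is injective (see \cite[Theorem 5.2]{Saric-heights}), it is enough to prove $h_\psi(\gamma)=i(\mu,\gamma)$ for every simple closed geodesic $\gamma$, for this forces $\mu_\psi=\mu=\mu_\varphi$ and hence $\psi=\varphi$.

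To identify the heights I would combine two facts. On the foliation side, for each $\gamma$ we have $h_{\varphi_{n_k}}(\gamma)=i(\mu_{n_k},\gamma)=h_{\mathcal{F}_{n_k}}(\gamma)$, and hypothesis (1) gives $h_{\mathcal{F}_{n_k}}(\gamma)\to h_{\mathcal{F}}(\gamma)=i(\mu,\gamma)$. On the differential side, the uniform bound $\int_X|\varphi_{n_k}|\le 2M$ places us in the setting of \cite[Theorem 24.7]{Strebel}, exactly as in the proof of Lemma~\ref{lem:heights=}, and yields the continuity of heights under local uniform convergence,
$$
\lim_{k\to\infty}h_{\varphi_{n_k}}(\gamma)=h_\psi(\gamma).
$$
Comparing the two limits gives $h_\psi(\gamma)=i(\mu,\gamma)$ for all $\gamma$, completing the identification $\psi=\varphi$ and thereby the proposition.

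The main obstacle is this last equality of heights, and specifically its lower-semicontinuity half $h_\psi(\gamma)\le\liminf_k h_{\varphi_{n_k}}(\gamma)$. The opposite inequality is immediate: testing $h_{\varphi_{n_k}}(\gamma)$ against a fixed curve $\gamma_0$ homotopic to $\gamma$ and passing to the limit on the compact set $\gamma_0$ gives $\limsup_k h_{\varphi_{n_k}}(\gamma)\le\int_{\gamma_0}|Im(\sqrt{\psi}\,dz)|$, followed by an infimum over $\gamma_0$. The lower bound, however, can fail for merely locally uniformly convergent sequences if mass escapes to the ends of $X$, and it is precisely the uniform Dirichlet bound of hypothesis (2)---equivalently the uniform $L^1$ bound on $\{\varphi_{n_k}\}$---that rules this out and makes \cite[Theorem 24.7]{Strebel} applicable on the non-compact surface $X$.
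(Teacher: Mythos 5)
Your argument is correct and follows essentially the same route as the paper: the uniform Dirichlet bound gives a uniform $L^1$ bound and hence normality, subsequential limits are identified with $\varphi$ by matching heights via the Heights Theorem, and the subsequence principle upgrades this to convergence of the whole sequence. The only difference is that you spell out the continuity of heights under locally uniform convergence (via \cite[Theorem 24.7]{Strebel}, as in Lemma \ref{lem:heights=}), a step the paper's proof leaves implicit; this is a useful clarification rather than a new approach.
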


\begin{proof}
Let $\varphi_n,\varphi \in A(X)$ realize $\mu_n,\mu\in ML_{\mathrm{int}}(X)$, respectively. Since $\int_X|\varphi_n|\leq D(\mathcal{F}_n)\leq M$, we conclude that a subsequence of $\varphi_n$ converges uniformly on compact subsets to a holomorphic quadratic differential $\psi\in A(X)$. Since the heights of $\mathcal{F}_n$  are equal to the heights of $\varphi_n$ and the heights of $\mathcal{F}$  are equal to the heights of $\varphi$, it follows that $\psi =\varphi$ by the Heights Theorem \cite{Saric-heights}. Therefore every subsequence of $\{\varphi_n\}_n$ has a subsequence converging to the same limit $\varphi$ uniformly on compact subsets. Thus the whole sequence converges to $\varphi$ as well.
\end{proof}

\section{The vertical foliation of quadratic differentials and harmonic measure of the boundary at infinity}

A Riemann surface $X=\mathbb{H}/\Gamma$ is said to be {\it parabolic} if it does not support a Green's function-i.e., there is no positive harmonic function on $X$ that has a single singularity of the form $-\log |z-\zeta |$ in a neighborhood of a single point $\zeta\in X$ expressed in a local chart whose values are zero at the ideal boundary(see Ahlfors-Sario \cite{AhlforsSario}). This definition gives a natural classification of infinite Riemann surfaces: $X$ is of parabolic type, i.e., $X\in O_G$, or $X$ is not of parabolic type, i.e., $X\notin O_G$.

The surfaces of parabolic type enjoy many favorable function theoretic, geometric, and dynamical properties. They were studied by many authors (for example, see \cite{BHS} for references).
The following is a list of some of the properties that are equivalent to $X=\mathbb{H}/\Gamma$ being parabolic, which was established  by Ahlfors-Sario \cite{AhlforsSario}, Hopf-Tsuji-Sullivan \cite{Tsuji}, \cite{Sullivan} , Astala-Zinsmeister \cite{Astala-Zinsmeister}, Bishop \cite{Bishop}:

\begin{itemize}
\item the geodesic flow for the hyperbolic metric on the unit tangent bundle of $X$ is ergodic
\item the Poincar\'e series for $\Gamma$ diverges
\item the harmonic measure of the boundary at infinity relative to a compact subsurface is zero
\item the group $\Gamma$ has Bowen property
\item the Brownian motion is recurrent
\item the surface $X$ does not support a non-constant positive subharmonic function
\end{itemize}

If an infinite Riemann surface $X=\mathbb{H}/\Gamma$ is not equal to its convex core, or if $\Gamma$ is of the second kind, then $X$ is not parabolic. This is a simple consequence of the fact that $X$ contains a hyperbolic funnel or a hyperbolic half-plane (see \cite{BS}). Indeed, in the case of a funnel or a half-plane, there is an open subset of the unit tangent bundle on which the geodesic flow escapes to infinity.

Recall that a horizontal trajectory of a holomorphic quadratic differential $\varphi$ on $X$ is called a {\it cross-cut} if it leaves every compact subset of $X$ at both ends (see \cite{Strebel}). It is a simple matter of constructing integrable holomorphic quadratic differentials on finite surfaces with funnels such that the set of cross-cuts has a non-zero measure.

In this paper, we assume that $\Gamma$ is of the first kind. The surface $X=\mathbb{H}/\Gamma$ may not be parabolic even in this case. In the case when $X$ is parabolic, Marden and Strebel (see \cite{MardenStrebel1} and \cite[Theorem 24.4]{Strebel}) proved the following theorem.

\begin{thm}[Marden-Strebel]
\label{thm:par-int}
Let $X=\mathbb{H}/\Gamma$ be an infinite Riemann surface of parabolic type and $\varphi$ a non-zero integrable holomorphic quadratic differential on $X$. Then the set of horizontal trajectories of $\varphi$ that are cross-cuts has zero area.
\end{thm}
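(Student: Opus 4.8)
The plan is to prove the contrapositive, using the equivalence listed above that $X\in O_G$ if and only if the harmonic measure of the ideal boundary relative to a compact subsurface vanishes. Thus I assume that the set $E$ of horizontal cross-cuts of $\varphi$ has positive $\varphi$-area, $\int_E|\varphi|>0$, and I produce a compact subsurface $X_0$ and a point at which the harmonic measure of the ideal boundary is strictly positive, contradicting $X\in O_G$. The organizing tool is the natural parameter $w=u+iv$: a cross-cut is a leaf $\{v=\mathrm{const}\}$ leaving every compact subset of $X$ at both ends, and each such leaf persists under small perturbation of its height, so it is contained in a maximal open band of parallel cross-cuts — a strip domain $R$ — on which $w$ is a single-valued conformal coordinate mapping $R$ onto a Euclidean strip $\Omega=\{u+iv: 0<v<h,\ \alpha(v)<u<\beta(v)\}$, with the leaves corresponding to the segments $v=\mathrm{const}$. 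Since $\int_E|\varphi|>0$ and the critical leaves have measure zero, at least one such $R$ has positive area, and I work with it.

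The decisive role of integrability enters here. On $R$ one has $\int_R|\varphi|=\int_0^h(\beta(v)-\alpha(v))\,dv<\infty$, so for almost every $v$ the leaf at height $v$ has finite $\varphi$-length $\beta(v)-\alpha(v)<\infty$. Consequently the two vertical ends $\{u\to\alpha(v)\}$ and $\{u\to\beta(v)\}$ of the model $\Omega$ lie at finite Euclidean distance, even though the corresponding leaves escape every compact subset of $X$: in the natural parameter the ideal boundary is reached along the cross-cuts after finite $\varphi$-length. This is precisely the feature that fails for non-integrable differentials (those with double poles or funnels), where the strip is genuinely bi-infinite, its ends sit at infinite distance, and positive-area cross-cuts are compatible with parabolicity. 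Fixing a compact vertical transversal $\sigma=\{u=u_0\}\cap R$ and a compact subsurface $X_0\supset\sigma$, I pass to the ``right half'' $R^+=\{0<v<h,\ u_0<u<\beta(v)\}$, whose boundary consists of $\sigma\subset\partial X_0$, the two horizontal sides $v=0,h$ lying in the interior critical graph of $\varphi$, and the vertical end $\{u\to\beta(v)\}$ belonging to the ideal boundary of $X$.

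To extract positive harmonic measure I compare through an exhaustion $X_0\subset X_1\subset\cdots$. Let $\omega_n$ be harmonic on $X_n\setminus X_0$ with $\omega_n=0$ on $\partial X_0$ and $\omega_n=1$ on $\partial X_n$, so $\omega_n\downarrow\omega_X$ and $X\in O_G$ forces $\omega_X\equiv 0$. On $R^+\cap X_n$ I introduce the barrier $\hat\omega_n$, harmonic with boundary values $1$ on the part of $\partial X_n$ closing off the vertical end and $0$ on $\sigma$ and on the two horizontal sides. On $\partial(R^+\cap X_n)$ one checks $\omega_n\ge\hat\omega_n$ termwise, so the maximum principle yields $\omega_n\ge\hat\omega_n$ throughout $R^+\cap X_n$. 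As $n\to\infty$ the functions $\hat\omega_n$ converge to the harmonic measure $\hat\omega$ in $\Omega^+$ of the vertical end $\{u=\beta(v)\}$; since that end is a genuine boundary arc at finite distance (finite $\varphi$-length of almost every leaf), $\hat\omega$ is positive and not identically zero. Hence $\omega_X\ge\hat\omega>0$, contradicting $X\in O_G$ and forcing $E$ to have zero area.

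I expect the main obstacle to be the rigorous passage between the conformal strip model $\Omega$ and the surface $X$ near the vertical ends: one must verify that $\{u\to\beta(v)\}$ maps to the ideal boundary, so that it is seen by $\omega_n$ for large $n$, and that it carries positive harmonic measure $\hat\omega>0$ in $\Omega^+$ — the latter being exactly where finite $\varphi$-length of almost every cross-cut is used, and where the distinction from the non-integrable case is sharpest. A secondary point needing care is the measurable organization of the possibly irregular set $E$ into honest strip domains with controlled heights; alternatively the whole argument can be phrased probabilistically, replacing $\omega_X$ by recurrence of Brownian motion and the barrier estimate by the positive probability that conformally invariant Brownian motion started in $R^+$ exits through the finite-distance vertical end before returning to $\sigma$.
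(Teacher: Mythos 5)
First, a point of reference: the paper does not prove this theorem at all --- it is quoted from Marden--Strebel \cite{MardenStrebel1} and Strebel \cite[Theorem 24.4]{Strebel} --- so there is no internal proof to compare against. Your overall strategy (contrapositive; organize the cross-cuts into strip domains; use integrability to get $\beta(v)-\alpha(v)<\infty$ for a.e.\ height $v$; convert this into a contradiction with a potential-theoretic characterization of $O_G$) is the classical one, and the same length--area mechanism is what the paper itself runs, in the opposite direction, in Proposition \ref{prop:int-numbers} and in the proof of Theorem \ref{thm:int-non-par-cross-cut}. The problem is that the harmonic-measure implementation of the last step has a genuine gap, and it is exactly the step you flag as the ``main obstacle'' without closing it.

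Two things go wrong in the sentence ``$\hat\omega_n$ converge to the harmonic measure $\hat\omega$ in $\Omega^+$ of the vertical end \dots\ since that end is a genuine boundary arc at finite distance, $\hat\omega$ is positive.'' (i) The decreasing limit of your barriers $\hat\omega_n$ is the harmonic measure of the part of the boundary of $R^+$ lying over the \emph{ideal boundary of} $X$, not the planar harmonic measure of the geometric right end of $\Omega^+$. To identify the two you must show that a Brownian path exiting $\Omega^+$ through the right end cannot correspond to a path in $X$ converging to a point of $\partial R\cap X$: the map $w^{-1}$ need not extend continuously there, since $\beta$ is only lower semicontinuous and trajectories at nearby heights can overshoot $\beta(v^*)$ while accumulating on the critical graph. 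This requires an argument (ruling out exit at endpoints of free segments of $\Omega^c$, handling zeros of $\varphi$ and the horizontal sides) that is not supplied. (ii) Even as a statement about the planar domain $\Omega^+$, ``a boundary set at finite distance has positive harmonic measure'' is false (a single point, or a linearly null set, is at finite distance with zero harmonic measure); the correct quantitative input is $\int_{E'}\frac{dv}{\beta(v)-u_0}>0$, where $E'$ is the positive-measure set of heights with $\beta(v)<\infty$, combined with a Beurling-type estimate. Both defects disappear if you argue with extremal length instead of harmonic measure, which is how Strebel proves Theorem 24.4: each horizontal ray $\gamma_v$, $v\in E'$, joins the compact transversal $\sigma$ to the ideal boundary \emph{by the definition of a cross-cut}, so no boundary-correspondence issue arises; the modulus of $\{\gamma_v\}_{v\in E'}$ is at least $\int_{E'}\frac{dv}{\beta(v)-u_0}>0$ by Cauchy--Schwarz in the natural parameter; and since every $\gamma_v$ contains a subarc joining $\partial X_0$ to $\partial X_n$, the Dirichlet integrals of the harmonic measures $u_n$ used in the proof of Theorem \ref{thm:int-non-par-cross-cut} are bounded below by this modulus uniformly in $n$, so $u=\lim u_n$ is nonconstant and $X\notin O_G$. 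Finally, the strip decomposition itself needs a citation rather than the ``persistence under perturbation of the height'' heuristic (the set of cross-cuts is not open in general); the countable decomposition into cylinders, spiral sets and cross-cut strips up to area zero is Strebel \cite[\S 13]{Strebel}, which is what the paper invokes elsewhere.
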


The above theorem establishes the behavior of the horizontal trajectories of an integrable holomorphic quadratic differential on a parabolic Riemann surface. To complete this understanding, we consider what happens on any non-parabolic Riemann surface $X=\mathbb{H}/\Gamma$ with $\Gamma$ of the first kind. We prove

\begin{thm}
\label{thm:int-non-par-cross-cut}
Let $X=\mathbb{H}/\Gamma$ be an infinite Riemann surface not of parabolic type with $\Gamma$ of the first kind. Then there exists a non-zero integrable holomorphic quadratic differential $\varphi$ on $X$ such that the set of horizontal trajectories of $\varphi$ that are cross-cuts covers $X$ up to a set of zero area.
\end{thm}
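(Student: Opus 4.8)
The plan is to produce the differential as the image, under the straightening homeomorphism of Theorem \ref{thm:main}, of a measured lamination built from the harmonic measure of the ideal boundary, and then to read off its trajectory structure from the Hopf--Tsuji--Sullivan dichotomy. Since $X=\mathbb{H}/\Gamma\notin O_G$, fix a compact subsurface $X_0\subset X$ with piecewise smooth geodesic boundary and set $Y=X\setminus X_0$. Non-parabolicity is precisely the statement that the harmonic measure $u$ of the ideal boundary relative to $X_0$ is non-constant: $u$ is harmonic on $Y$, satisfies $0<u<1$ and $u\equiv 0$ on $\partial X_0$, and is the limit of the harmonic functions $u_n$ on $X_n\setminus X_0$ with $u_n\equiv 1$ on $\partial X_n$. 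A Green's identity computation gives $D_Y(u)=\int_Y|\nabla u|^2\,dxdy<\infty$, equal to the finite positive capacity of the ideal boundary. The level sets $\{u=c\}$ and the orthogonal gradient flow lines foliate $Y$ away from the critical points of $u$, and each flow line runs from $\partial X_0$ out to the ideal boundary, hence leaves every compact subset of $X$ at that end.

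Next I would assemble an integrable proper partial foliation $\mathcal{F}$ on $X$ with bi-infinite leaves. On $Y$ let $\mathcal{F}$ consist of the gradient flow lines of $u$ with transverse measure $|dv|$, where $v$ is a local harmonic conjugate of $u$, so that $D_Y(\mathcal{F})=D_Y(u)<\infty$. To close the flow lines into leaves with two ends at infinity I would foliate the compact piece $X_0$ by arcs joining points of $\partial X_0$ whose transverse measure matches the flux of $u$ across $\partial X_0$; since $X_0$ is compact such a measured foliation exists, for instance by Hubbard--Masur \cite{HubbardMasur} applied to the double of $X_0$, and has finite Dirichlet integral. Gluing along $\partial X_0$ yields a proper partial foliation $\mathcal{F}$ on $X$ with $D(\mathcal{F})<\infty$ in the sense of Definition \ref{def:partial-fol}: a typical leaf enters $X_0$ along one flow line and exits along another, so its lift to $\mathbb{H}$ has two distinct ideal endpoints. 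Because $\mathcal{F}$ covers $Y$ and $X_0$ up to a set of area zero and carries positive transverse measure, $\mu_{\mathcal{F}}\in ML_{\mathrm{int}}(X)$ is non-zero, and Theorem \ref{thm:main} furnishes a unique non-zero $\varphi\in A(X)$ with $\mu_{\varphi}=\mu_{\mathcal{F}}$.

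It remains to show the cross-cuts of $\varphi$ are co-null. Decompose $X$, up to a set of area zero, into the zeros and singular trajectories of $\varphi$, the cylinders of closed trajectories, the minimal and spiral domains of recurrent trajectories, and the set of cross-cuts. Since the flow-line lamination has no closed leaves, $\mu_{\varphi}=\mu_{\mathcal{F}}$ carries no atom, so there are no cylinders. A non-singular horizontal trajectory is a cross-cut exactly when the two ideal endpoints of its straightening are non-conical limit points of $\Gamma$, these endpoints being a proper-homotopy (hence escape-) invariant of the trajectory. By the Hopf--Tsuji--Sullivan dichotomy \cite{Tsuji}, \cite{Sullivan}, \cite{Nicholls1}, the condition $X\notin O_G$ forces the conical limit set of $\Gamma$ to have Lebesgue measure zero on $\partial\mathbb{H}$; since the endpoint distribution of $\mathcal{F}$ is the harmonic measure of the ideal boundary, which lies in the Lebesgue class, the leaves possessing a conical endpoint form a set of transverse measure zero. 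Writing the area form as $|\varphi|=|du|\,|dv|$ and integrating first along leaves and then transversally, Fubini shows that these leaves, and therefore all non-cross-cut trajectories, sweep out a set of zero area. Hence the cross-cuts of $\varphi$ cover $X$ up to a set of zero area, which is the desired conclusion and the exact complement of the Marden--Strebel Theorem \ref{thm:par-int}.

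The hard part is this last step. The differential delivered by Theorem \ref{thm:main} realizes $\mu_{\mathcal{F}}$ only up to homotopy, so its actual trajectories are not the gradient flow lines and their escape behaviour cannot be read off directly from the construction. The crux is therefore the measure-theoretic identification of the transverse measure of $\mathcal{F}$ with the harmonic measure on $\partial\mathbb{H}$, coupled with the Hopf--Tsuji--Sullivan fact that non-parabolicity makes the conical limit set null; this is precisely how the theme of the section, the harmonic measure of the boundary at infinity, governs the area of the non-escaping part of the foliation. By comparison, verifying that the connecting foliation on $X_0$ glues to a bona fide integrable partial foliation is a routine, if fiddly, technical matter.
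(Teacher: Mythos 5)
Your construction of the candidate differential coincides with the paper's: both start from the harmonic function $u$ on $X\setminus X_0$ produced by non-parabolicity, pass to the differential $(dU)^2$ with $U=u+iu^*$ whose horizontal arcs are the gradient flow lines $(u^*)^{-1}(c)$, verify integrability through the Dirichlet integral, close the flow lines up across $X_0$ by a foliation matching the flux $|du^*|$ on $\partial X_0$, and feed the resulting proper integrable partial foliation into Theorem \ref{thm:main}. Two points you dismiss as routine are actually argued in the paper and should not be skipped: that a.e.\ flow line really is a cross-cut (the paper rules out closed, recurrent, and $\partial X_0$-to-$\partial X_0$ trajectories using the strict monotonicity of $u$ along the leaves --- ``converges to the ideal boundary'' does not by itself mean ``leaves every compact set''), and the case analysis needed to extend the foliation over the pair of pants $X_0$ when the three boundary fluxes fail the triangle inequality.

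The genuine gap is in your last step. You reduce everything to the claim that a non-singular trajectory is a cross-cut exactly when the two ideal endpoints of its straightening are non-conical limit points, and then invoke Hopf--Tsuji--Sullivan to make the conical limit set Lebesgue-null. The direction you actually use --- that a trajectory ray failing to leave every compact set must converge to a \emph{conical} limit point --- is unproved and doubtful: such a ray returns to some compact set $K$ infinitely often, so its lift meets infinitely many translates $g_n\tilde{K}$ accumulating at the endpoint $\xi$, but nothing forces these translates to stay within bounded distance of the geodesic ray toward $\xi$. Trajectories of quadratic differentials are not quasi-geodesics, so the ray may approach $\xi$ tangentially and $\xi$ may be a horospherical but non-conical limit point; for convergence-type groups the horospherical limit set can still carry positive Lebesgue measure, so the Hopf--Tsuji--Sullivan dichotomy does not dispose of these leaves. (The converse inclusion in your ``exactly when'' fails too: on a parabolic surface individual cross-cuts exist and their endpoints are typically conical; this direction is not used, but it shows the stated equivalence cannot be right.) The paper avoids the limit set entirely: it arranges that \emph{every} leaf of the completed partial foliation $\widehat{\mathcal{F}}_u$ is a cross-cut and transfers this property to the realizing differential through the equality of heights and measured laminations, resting on the Marden--Strebel decomposition into cylinders, spiral domains and cross-cut strips, which is determined by the heights. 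Your instinct that this transfer is the delicate point is correct, but the proposed repair via harmonic measure and the conical limit set does not close it.
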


\begin{proof}
Let $\{ X_n\}_{n=0}^{\infty}$ be an exhaustion of $X$ by finite area surfaces with analytic boundaries such that $X_0$ is a geodesic pair of pants. Let $u_n$ solve the Dirichlet problem on $X_n\setminus X_0$ with boundary values $0$ on $\partial X_0$ and $1$ on $\partial X_n$.
Namely, $u_n$ is 
 a harmonic function $u_n:X_n\setminus X_0\to\mathbb{R}$ such that $u_n|_{\partial X_0}=0$ and $u_n|_{\partial X_n}=1$.  By Ahlfors-Sario \cite[pages 204-205]{AhlforsSario}, since $X$ is not of parabolic type, the sequence $u_n$ has a subsequence $u_{n_k}$ that converges to a non-constant harmonic function $u$ defined on $X\setminus X_0$ such that $0<u<1$  with boundary values $0$ on $\partial X_0$.

In a neighborhood of any point $z\in X\setminus X_0$, the harmonic conjugate $u^{*}$ of $u$ is well-defined. Let $U(z)=u(z)+iu^*(z)$  be the corresponding holomorphic function which is defined only locally. By the Cauchy-Riemann equations, we have that $dU(z)=(\frac{\partial u}{\partial x}-i\frac{\partial u}{\partial y})dz$ is a well-defined holomorphic $1$-form on the interior of $X\setminus X_0$. Let $\varphi (z)dz^2=(dU(z))^2$ be the corresponding  holomorphic quadratic differential on $X\setminus X_0$. The set of horizontal arcs of $\varphi$
is
the set of all $(u^*)^{-1}(c)$ for $c$ real constant. A maximal curve obtained by concatenating horizontal arcs is a {\it horizontal trajectory} of $\varphi$. It is called {\it regular} if it does not accumulate at a zero of $\varphi$, i.e., point with $U'(z)=0$.

Analogously, we define $u_n^*$ to be the local harmonic conjugate of $u_n$ and $U_n=u_n+iu_n^*$ the corresponding local holomorphic map in $X_n\setminus X_0$. Let $\varphi_ndz^2=(dU_n)^2$ be the corresponding holomorphic quadratic differential.

By the definition, we have that $|\varphi (z)|=|\nabla u(z)|^2$ on $X\setminus X_0$ and $|\varphi_n (z)|=|\nabla u_n(z)|^2$ on $X_n\setminus X_0$. By Fatou's lemma, we have that
\begin{equation}
\label{eq:fatou}
\iint_{X\setminus X_0}|\nabla u(z)|^2dxdy\leq\liminf_{n\to\infty}\iint_{X_n\setminus X_0}|\nabla u_n(z)|^2dxdy.
\end{equation}
A standard consideration of the natural parameter gives us that the set of horizontal trajectories of $\varphi_n$ is dividing $X_n\setminus X_0$ into finitely many rectangles in the natural parameter (see Strebel \cite[Chapter IV, \S 13]{Strebel}). Since $u_n$ is increasing from $0$ to $1$ along the horizontal trajectories, the $|\sqrt{\varphi_n(z)}dz|$-length of each horizontal trajectory is $1$. Then integrating along horizontal trajectories and using Fubini's theorem, we get
\begin{equation}
\label{eq:fubini}
\iint_{X_n\setminus X_0}|\nabla u_n(z)|^2dxdy=\int_{\partial X_0}|du_n^*(z)|.
\end{equation}
Since $u_n\to u$ as $n\to\infty$ uniformly on compact subsets of the closure of $X_n\setminus X_0$ and since $\partial X_0$ consists of finitely many analytic arcs, we can normalize the harmonic conjugates $u_n^*$ of $u_n$ such that they converge to the harmonic conjugate $u^*$ of $u$ uniformly on compact subsets covering $\partial X_0$. 

Then we have
$$
\lim_{n\to\infty} \int_{\partial X_0}|du_n^*(z)|=\int_{\partial X_0}|du^*(z)|
$$
and by (\ref{eq:fatou}) and (\ref{eq:fubini}) we conclude that $\varphi$ is integrable on $X\setminus X_0$.

Except for an area zero set, the surface $X\setminus X_0$ is foliated by horizontal trajectories of $\varphi$ that are either closed, recurrent, or cross-cuts (see Strebel \cite[\S 13]{Strebel}). By the existence of the natural coordinates around any regular point, the harmonic function $u$ is strictly monotonic on each regular horizontal trajectory. Therefore a regular trajectory cannot be closed since $u$ would repeat itself. Similarly, a regular trajectory cannot be recurrent since it would intersect a vertical trajectory more than once, and the value of $u$ would repeat on a single trajectory. The remaining possibility is that all horizontal trajectories are cross-cuts except a set of zero area. If a cross-cut goes from $\partial X_0$ to itself, then $u$ would have a maximum on the cross-cut in the interior of $X\setminus X_0$ while the values of $u$ would converge to $0$ towards both ends of the cross-cut. Thus $u$ again cannot be monotone on the cross-cut.

Therefore a horizontal leaf in $X\setminus X_0$ (up to a set of measure zero) is a cross-cut that connects $\partial X_0$ to $\partial X$, or it connects $\partial X$ to itself. The set of horizontal leaves that start at $\partial X_0$ has non-zero (linear) $|du^*|$-measure, and it must accumulate to $\partial X$ by the above. We define a partial foliation $\mathcal{F}_u$ on $X\setminus X_0$ to consist of maps $u^*$ in the natural parameter of $\varphi$ restricted to the union of cross-cut horizontal leaves that connect $\partial X_0$ to $\partial X$. Then
$$
0<D_{X\setminus X_0}(\mathcal{F}_u)\leq\iint_{X\setminus X_0}|\varphi (z)|dxdy.
$$

We extend $\mathcal{F}_u$ to a partial foliation on the whole surface $X$ that only has cross-cut horizontal trajectories up to a set of zero area. Recall that $X_0$ has three boundary components (cuffs) with at least one cuff on the boundary of the component of $X\setminus X_0$ that contains $\mathcal{F}_u$. 

\begin{figure}[h]
\label{fig:approx-height}
\leavevmode \SetLabels
\L(.5*.05) $\gamma_c$\\
\L(.22*.34) $R_{a,c}$\\
\L(.162*.5) $a-\frac{\epsilon}{2}$\\
\L(.37*.42) $\gamma_a$\\
\L(.4*.55) $\frac{\epsilon}{2}$\\
\L(.514*.55) $\frac{\epsilon}{2}$\\
\L(.445*.55) $R_{a,b}$\\
\L(.6*.25) $R_{b,c}$\\
\L(.71*.55) $b-\frac{\epsilon}{2}$\\
\L(.4*.22) $J_2'$\\
\L(.4*.82) $J_1'$\\
\L(.475*.47) $J_2$\\
\endSetLabels
\begin{center}
\AffixLabels{\centerline{\epsfig{file =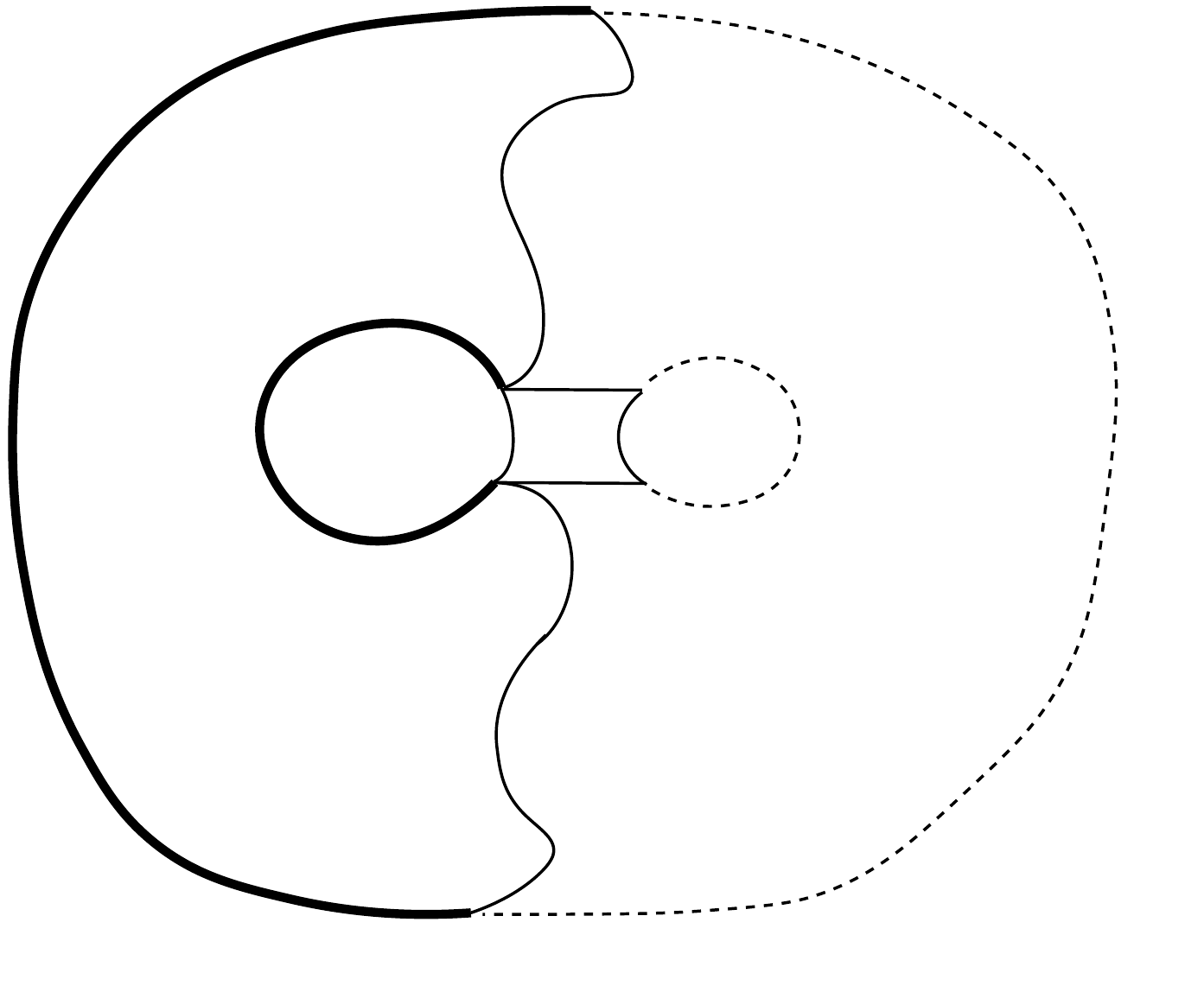,width=9.0cm,angle=0} }}
\vspace{-20pt}
\end{center}
\caption{$c\leq a+b$, $\epsilon =a+b-c$.}
\end{figure}

Assume first that all three cuffs of $X_0$ meet the leaves of the partial foliation $\mathcal{F}_u$. Let $a\leq b\leq c$ be the transverse measures of the boundary components $\gamma_a$, $\gamma_b$ and $\gamma_c$ of $X_0$, respectively. We divide it into two cases: $c\leq a+b$ or $c>a+b$. In the case $c\leq a+b$  we let $\epsilon=a+b-c$. Choose  arcs $I_a\subset\gamma_a$ and $I_b\subset \gamma_b$ both of transverse measure $\epsilon /2$. We connect endpoints of $I_a$ and $I_b$ with two analytic arcs $J_1$ and $J_2$  orthogonal to $\gamma_a$ and $\gamma_b$ at their endpoints such that $J_1\cup J_2\cup I_a\cup I_b$ is a boundary of a simply connected domain $R_{a,b}$ in $X_0$ (see Figure 2). We choose  subarcs $I_a'$ and $I_c'$  of $\gamma_a\setminus I_a$ and $\gamma_c$ that both have transverse measure $a-\frac{\epsilon}{2}$. We connect the endpoints of $I_a'$ and $I_c'$ by orthogonal analytic arcs $J_1',J_2'$ to form a simply connected region $R_{a,c}$ disjoint from $R_{a,b}$ with boundary $I_c'\cup I_a'\cup J_1'\cup J_2'$. Finally, we denote by $R_{b,c}$ the simply connected region $X_0\setminus (R_{a,b}\cup R_{b,c})$.  We map $R_{a,b}$, $R_{a,c}$ and $R_{b,c}$ by conformal mappings to Euclidean rectangles such that $J_i$ and $J_i'$ are mapped onto horizontal boundary sides, and we keep the same notation for the rectangles. 

The transverse measure on the vertical sides is pushed-forward to the transverse measure of the rectangles $R_{a,b}$, $R_{a,c}$ and $R_{b,c}$, and it is given by $\int_{*}h(y)dy$, where $h$ is a differentiable function of the Euclidean height $y$ and the total integrals over corresponding vertical sides are equal. We define $u^{*}(y)=\int_0^yh(s)ds$ where $0$ is the lowest point of the vertical side. The function $u^*(y)$, defined only on the vertical sides of the rectangles, gives the transverse measure of the arc from $0$ to $y$. We denote by $U^*$ the solution to the mixed Dirichlet-Neumann boundary value problem in each rectangle such that $U^*$ limits to $u^*$ on the vertical sides and the normal derivative of $U^*$ is zero on the horizontal sides. Note that the Dirichlet integral of $U^*$ over the rectangles is finite.

We complete the partial foliation $\mathcal{F}_u$ by attaching pre-images of the arcs $(U^{*})^{-1}(const)$ in the rectangles $R_{a,b}$, $R_{a,c}$ and $R_{b,c}$. The transverse measure to the foliation of $X_0$ agrees with the transverse measure of $\mathcal{F}_u$ on $\partial X_0$, and the Dirichlet integral on $X_0$ is finite. Therefore we obtain a partial measured foliation $\widehat{\mathcal{F}}_u$ of $X$ with $D_X(\widehat{\mathcal{F}}_u)<\infty$. 

It remains to prove that $\widehat{\mathcal{F}}_u$ is a proper foliation. Let $\ell$ be a leaf of $\widehat{\mathcal{F}}_u$. It is divided by $\ell\cap X_0$ into two rays on $X\setminus X_0$. Since the rays are not closed trajectories of $\varphi$, it follows that lifts to the universal cover of the two rays have different endpoints. Thus $\widehat{\mathcal{F}}_u$ is an integrable proper partial measured foliation of $X$ and Theorem \ref{thm:main} implies that there exists $\varphi_u\in A(X)$ that realizes $\widehat{\mathcal{F}}_u$. The trajectories of $\widehat{\mathcal{F}}_u$ are cross-cuts, and therefore all the trajectories of $\varphi_u$ are also cross-cuts which proves the theorem in the case $c\leq a+b$.

\begin{figure}[h]
\label{fig:approx-height}
\leavevmode \SetLabels
\L(.7*.15) $\gamma_c$\\
\L(.45*.07) $\frac{\epsilon}{2}$\\
\L(.6*.425) $\gamma_b$\\
\L(.86*.58) $b$\\
\L(.6*.25) $R_{b,c}$\\
\L(.34*.4) $\gamma_a$\\
\L(.3*.8) $R_{a,c}$\\
\L(.435*.3) $R_{c,c}$\\
\L(.11*.57) $a$\\
\endSetLabels
\begin{center}
\AffixLabels{\centerline{\epsfig{file =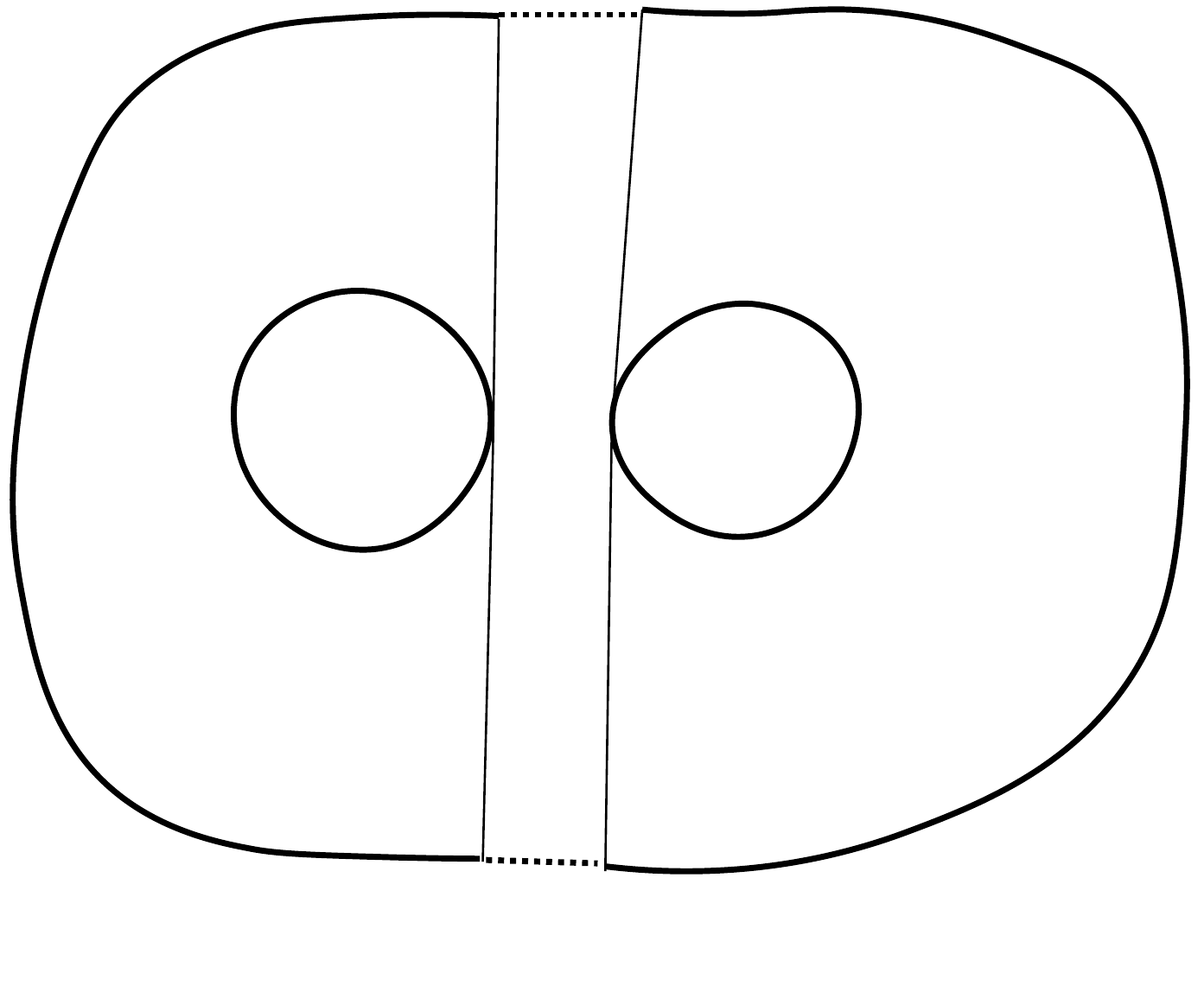,width=9.0cm,angle=0} }}
\vspace{-20pt}
\end{center}
\caption{$c> a+b$, $\epsilon =c-a-b$.}
\end{figure}

If $c>a+b$, then we refer to Figure 3 for the construction of the extension of $\widehat{\mathcal{F}}_u$ on $X_0$. All the steps in the proof are analogous to the previous case.

\begin{figure}[h]
\label{fig:approx-height}
\leavevmode \SetLabels
\L(.4*.4) $\gamma_a$\\
\L(.55*.4) $\gamma_b$\\
\L(.45*.75) $R_{a,b}$\\
\L(.7*.5) $R_{a,a}$\\
\L(.285*.6) $\frac{\epsilon}{2}$\\
\L(.33*.487) $\frac{\epsilon}{2}$\\
\endSetLabels
\begin{center}
\AffixLabels{\centerline{\epsfig{file =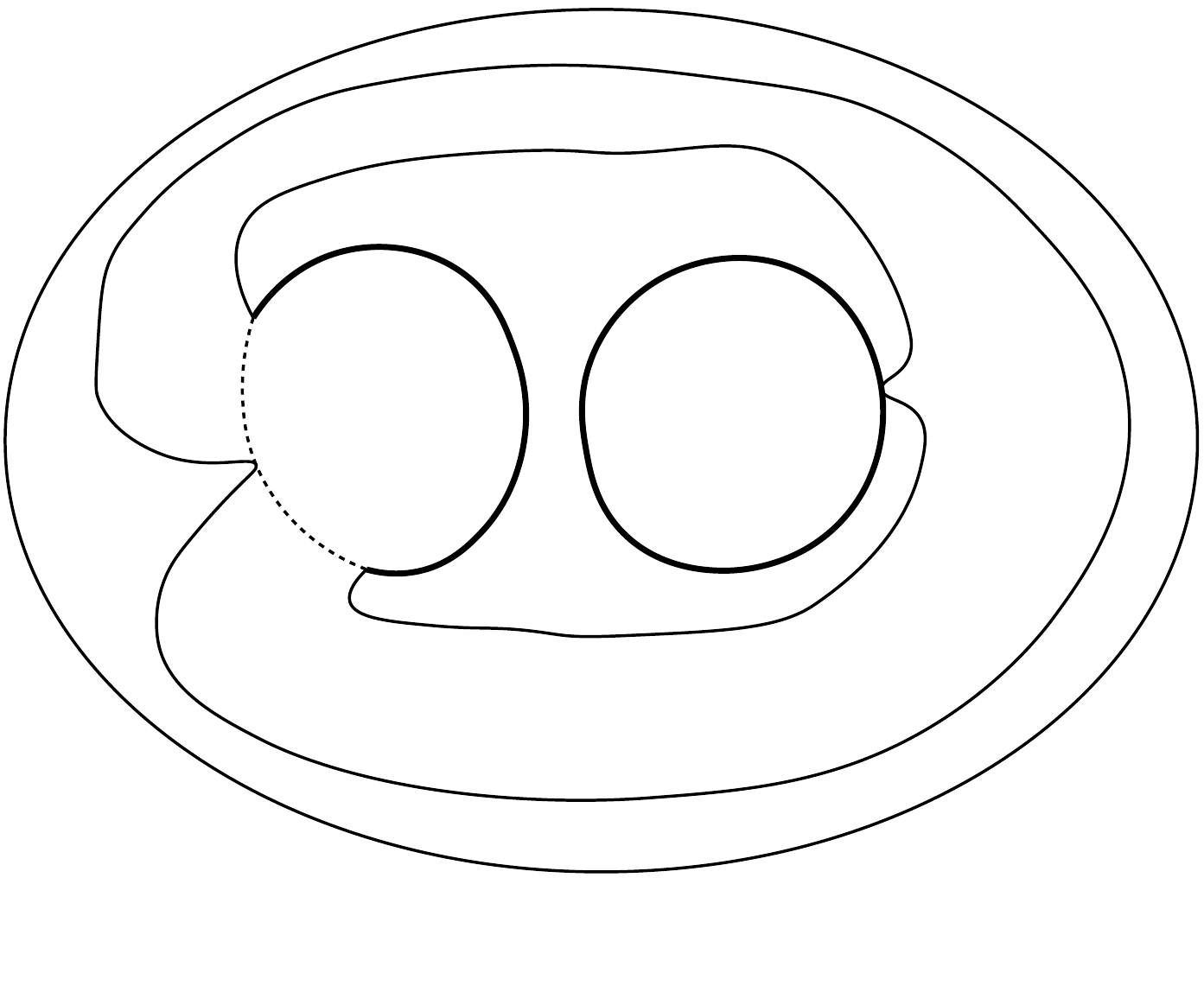,width=9.0cm,angle=0} }}
\vspace{-20pt}
\end{center}
\caption{$c=0$, $a\geq b$, $\epsilon =a-b$.}
\end{figure}

If only two boundary components of $X_0$ meet $\mathcal{F}_u$, then we refer to Figure 4 for the construction of the partial foliation $\widehat{\mathcal{F}}_u$. If only one boundary component of $X_0$ meets $\mathcal{F}_u$, then we refer to Figure 5 for the construction of the partial foliation $\widehat{\mathcal{F}}_u$.

\begin{figure}[h]
\label{fig:approx-height}
\leavevmode \SetLabels
\L(.38*.4) $\gamma_a$\\
\L(.4*.8) $R_{a,a}$\\
\endSetLabels
\begin{center}
\AffixLabels{\centerline{\epsfig{file =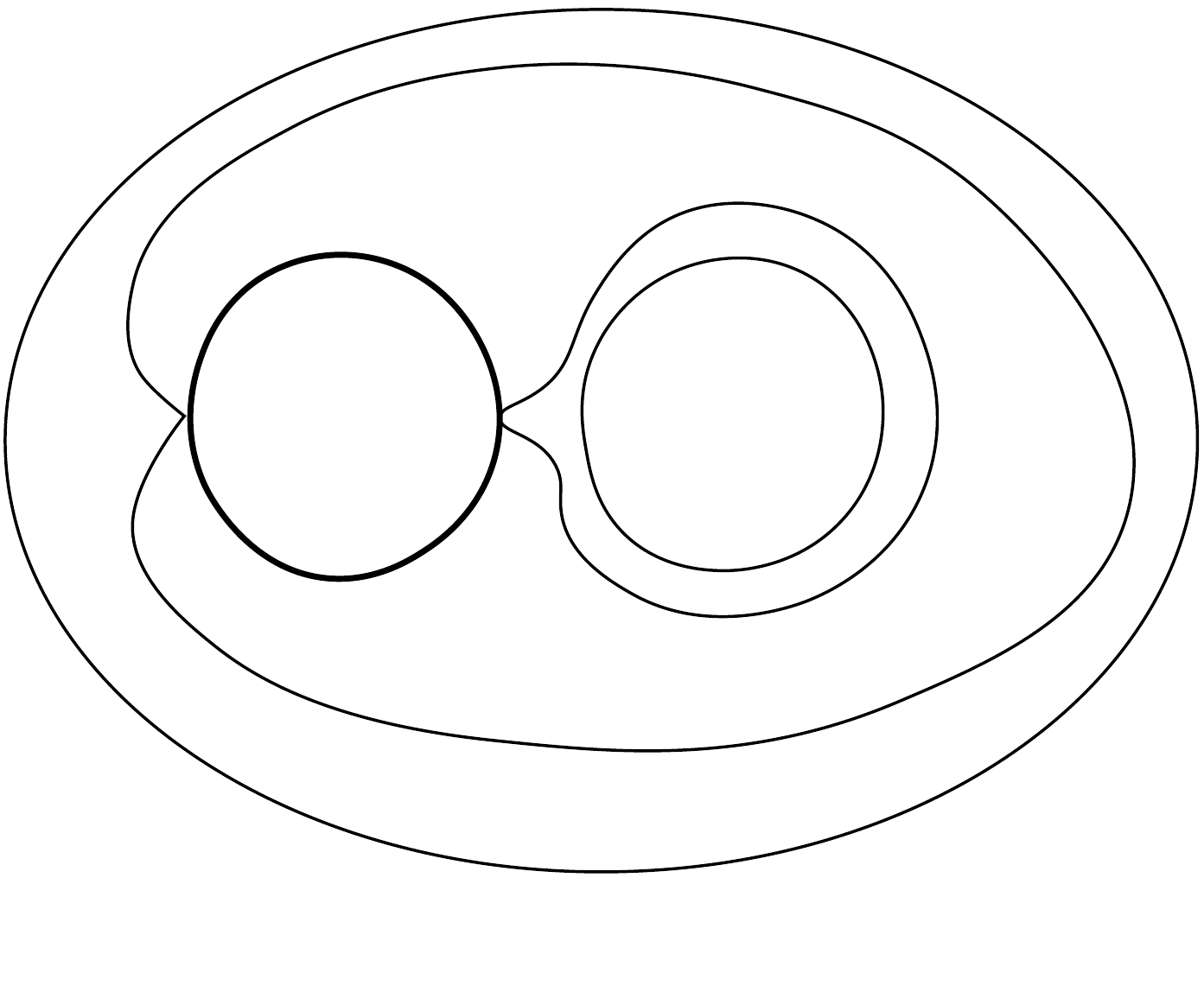,width=9.0cm,angle=0} }}
\vspace{-20pt}
\end{center}
\caption{$b=c=0$, $a>0$.}
\end{figure}

This finishes the proof of the theorem.
\end{proof}

The above two theorems establish another equivalent property for a Riemann surface to be parabolic. Namely,

\begin{thm}
\label{thm:par-ch-cross-cuts}
Let $X=\mathbb{H}/\Gamma$ be a Riemann surface with $\Gamma$ of the first kind. Then $X$ is of parabolic type if and only if the set of horizontal trajectories of each integrable holomorphic quadratic differential on $X$ that are cross-cuts is of zero area.
\end{thm}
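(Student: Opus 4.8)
The plan is to derive this equivalence directly from the two theorems that immediately precede it, since together they already supply both implications. The statement is a biconditional, so I would prove the two directions separately, and for the ``only if'' half it is cleanest to argue by contraposition.

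First I would dispatch the forward implication: assume $X=\mathbb{H}/\Gamma$ is of parabolic type, and let $\varphi$ be an arbitrary integrable holomorphic quadratic differential on $X$. If $\varphi\equiv 0$ there are no horizontal trajectories and the conclusion is vacuous. If $\varphi\neq 0$, then Theorem \ref{thm:par-int} (the Marden--Strebel theorem) asserts precisely that the set of horizontal trajectories of $\varphi$ that are cross-cuts has zero area. Since $\varphi$ was arbitrary, the stated condition holds for every integrable holomorphic quadratic differential, as required.

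For the reverse implication I would prove the contrapositive. Suppose $X$ is \emph{not} of parabolic type. Because $\Gamma$ is of the first kind, Theorem \ref{thm:int-non-par-cross-cut} produces a non-zero integrable holomorphic quadratic differential $\varphi$ on $X$ whose cross-cut horizontal trajectories cover $X$ up to a set of zero area. Since $X$ carries a hyperbolic metric and hence has positive area, the set of these cross-cuts must itself have positive area. Thus there is an integrable holomorphic quadratic differential on $X$ for which the cross-cut set is not of zero area, which negates the stated condition and completes the equivalence. I do not expect any genuine obstacle here: the only points to check are the trivial treatment of $\varphi\equiv 0$ and the elementary observation that ``covering $X$ up to zero area'' forces the cross-cut set to have positive area. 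All the real content of the theorem is already concentrated in Theorem \ref{thm:int-non-par-cross-cut}, whose construction of the harmonic-function-driven differential $\varphi_u$ carries the non-parabolic direction.
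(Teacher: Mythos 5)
Your proposal is correct and follows exactly the paper's own argument: the forward direction is Theorem \ref{thm:par-int} (Marden--Strebel) applied to an arbitrary nonzero $\varphi\in A(X)$, and the converse is the contrapositive via the differential produced by Theorem \ref{thm:int-non-par-cross-cut}. The two small points you flag (the trivial case $\varphi\equiv 0$ and the observation that covering $X$ up to zero area forces positive area) are harmless elaborations of the same two-line deduction given in the paper.
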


\begin{proof}
If $X$ is parabolic, then Theorem \ref{thm:par-int} implies that the set of horizontal cross-cuts is of measure zero. If $X$ is not of parabolic type, then Theorem \ref{thm:int-non-par-cross-cut} gives an integrable holomorphic quadratic differential whose horizontal cross-cut trajectories cover the whole $X$ up to a measure zero. The theorem is proved. 
\end{proof}

The above theorem, together with Theorem \ref{thm:main}, gives the following criteria for finding surfaces that are not parabolic.

\begin{thm}
\label{thm:non-parabolic-criteria}
Let $X=\mathbb{H}/\Gamma$ be a Riemann surface with $\Gamma$ of the first kind. Then $X$ is not of parabolic type if and only if there is a measured lamination $\mu\in {ML}_{\mathrm{int}}(X)$ such that the $\mu$-measure of the geodesics of the support of $\mu$ leaving every compact subset of $X$ is positive.
\end{thm}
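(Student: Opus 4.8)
The plan is to transport the statement across the straightening bijection of Theorem \ref{thm:main} so that it reduces to the area characterization of Theorem \ref{thm:par-ch-cross-cuts}. Given $\mu\in ML_{\mathrm{int}}(X)$, Theorem \ref{thm:main} produces a unique $\varphi\in A(X)$ with $\mu_{\varphi}=\mu$, and conversely; so it suffices to match, for a fixed such pair $(\varphi,\mu)$, the condition ``the $\mu$-measure of the geodesics in the support of $\mu$ that leave every compact subset of $X$ is positive'' with the condition ``the set of horizontal trajectories of $\varphi$ that are cross-cuts has positive area.'' Once this dictionary is in place, Theorem \ref{thm:par-ch-cross-cuts} --- which asserts that $X$ is parabolic if and only if the cross-cut set of every $\varphi\in A(X)$ has zero area --- gives the equivalence immediately: $X$ is non-parabolic iff some $\varphi$ has a positive-area cross-cut set iff some $\mu$ has a positive-measure set of escaping geodesics.

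The core of the argument is the measure-matching dictionary, which I would establish in two steps. First, recall from Strebel's classification that, off a set of zero area, every non-singular horizontal trajectory of $\varphi$ is closed, recurrent, or a cross-cut. Under straightening, closed trajectories become closed geodesics and recurrent trajectories become geodesics confined to a minimal subsurface, so in both cases the resulting geodesic is contained in a compact subset of $X$. A cross-cut, by definition, leaves every compact set at both ends, and its straightening --- a geodesic sharing the two ideal endpoints of a lift --- should likewise leave every compact subset. Thus, up to the ($\mu$-null) countable family of singular trajectories, the geodesics in the support of $\mu$ that leave every compact subset are exactly the straightenings of the cross-cuts of $\varphi$. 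Second, since the straightening map pushes the transverse measure $|dv|$ of the horizontal foliation forward to the transverse measure of $\mu$, the $\mu$-measure of the escaping geodesics equals the $|dv|$-transverse measure of the cross-cut trajectories.

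It then remains to see that the transverse measure of the cross-cut trajectories is positive if and only if their union has positive area. This is a Fubini computation in the natural parameter: in a coordinate rectangle with $(u,v)\in[0,\alpha]\times[0,\beta]$ the cross-cuts meet the rectangle in a product $[0,\alpha]\times V$ for a measurable set $V\subset[0,\beta]$, whose area is $\alpha\,\lvert V\rvert$ and whose transverse measure is $\lvert V\rvert$; summing over a locally finite cover by such rectangles shows that the two notions of ``positive'' coincide. Combining the three pieces yields the dictionary, and the theorem follows as above.

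The main obstacle I anticipate is the step asserting that a cross-cut straightens to a geodesic that again leaves every compact subset of $X$ (and, in the opposite direction, that closed and recurrent trajectories do not contribute escaping geodesics of positive measure). The subtlety is that sharing ideal endpoints in $\mathbb{H}$ does not, a priori, force a curve and the corresponding geodesic to remain within bounded hyperbolic distance downstairs on $X$, since a horizontal trajectory need not be a quasigeodesic. I would control this by passing to the lifts $\tilde{\mathfrak{g}}$ and $\tilde{g}$ in $\mathbb{H}$ and arguing that the trajectory and its straightening are \emph{properly} homotopic, so that escaping an exhaustion at a given end is preserved; concretely, one uses that the projection $\mathbb{H}\to X$ is distance non-increasing together with the trichotomy above, which confines the non-escaping trajectory types to compact pieces and leaves only the cross-cuts to account for the escaping geodesics of positive $\mu$-measure.
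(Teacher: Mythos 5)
Your proposal is correct and follows essentially the same route as the paper, which states this theorem as an immediate consequence of Theorem \ref{thm:par-ch-cross-cuts} combined with the bijection $A(X)\to ML_{\mathrm{int}}(X)$ of Theorem \ref{thm:main} and offers no further proof. The dictionary you spell out --- cross-cuts correspond, up to $\mu$-null and area-null sets, to the escaping geodesics of $\mu_{\varphi}$, and positivity of transverse measure matches positivity of area via Fubini in the natural parameter --- is exactly the identification the paper leaves implicit (and uses again, in the same implicit way, in the proofs of Theorems \ref{thm:bdd-parbolic} and \ref{thm:BHS}).
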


We show that ${ML}_{\mathrm{int}}(X)$ is a quasiconformal invariant.

\begin{thm}
\label{thm:mlf-qc-inv}
Let $g:X\to Y$ be a quasiconformal map between two infinite Riemann surfaces whose covering groups are of the first kind, and let  $\hat{g}:G(X)\to G(Y)$ be the induced bijection on the spaces of geodesics. Then $\hat{g}$ induces a bijection  $\hat{g}:ML(X)\to {ML}(Y)$ such that
$$
\hat{g}({ML}_{\mathrm{int}}(X))={ML}_{\mathrm{int}}(Y). 
$$
\end{thm}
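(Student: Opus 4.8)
The plan is to lift $g$ to a quasiconformal homeomorphism $\tilde g\colon\mathbb{H}\to\mathbb{H}$ conjugating the two covering groups; its boundary extension $\tilde g|_{\partial\mathbb{H}}$ is a quasisymmetric homeomorphism that sends the endpoint pair of a geodesic to the endpoint pair of another geodesic, and this is exactly the map $\hat g\colon G(X)\to G(Y)$. Since a homeomorphism of $\partial\mathbb{H}$ preserves the separation (linking) of boundary pairs, $\hat g$ carries disjoint geodesics to disjoint geodesics and hence geodesic laminations to geodesic laminations; pushing a transverse measure forward under $\hat g$, viewed as a $\Gamma$-invariant measure on $G(\mathbb{H})$, produces the claimed bijection $\hat g\colon ML(X)\to ML(Y)$ with inverse induced by $g^{-1}$. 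Because $g^{-1}$ is again quasiconformal, it suffices to prove the single inclusion $\hat g(ML_{\mathrm{int}}(X))\subseteq ML_{\mathrm{int}}(Y)$; applying the same argument to $g^{-1}$ then yields the reverse inclusion, and hence equality.

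For the inclusion I would fix $\mu\in ML_{\mathrm{int}}(X)$. By Theorem \ref{thm:main} there is $\varphi\in A(X)$ with $\mu_\varphi=\mu$, and its horizontal foliation $\mathcal{F}_\varphi$ is an integrable partial foliation realizing $\mu$ with $D_X(\mathcal{F}_\varphi)=\int_X|\varphi|<\infty$. I would then push $\mathcal{F}_\varphi$ forward by $g$: the defining functions become $v_i\circ g^{-1}$ on the Jordan domains $g(U_i)$, and the local leaves become the topological arcs $g(v_i^{-1}(c))$. The combinatorial data of Definition \ref{def:partial-fol} — local finiteness of the charts, the splitting of each $\partial g(U_i)$ into four sides with a distinguished pair of vertical sides, the transition relations $(v_i\pm v_j)\circ g^{-1}=const$, measurability, and the overlap compatibility of the leaf-sets — all transfer verbatim under the homeomorphism $g$. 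The straightening of the pushforward $g_*\mathcal{F}_\varphi$ equals $\hat g(\mu)$: a leaf of $\mathcal{F}_\varphi$ lifts to a curve with distinct endpoints $(p,q)\in\partial\mathbb{H}$, its $g$-image lifts to a curve with endpoints $(\tilde g(p),\tilde g(q))$, and straightening replaces it by the geodesic $\hat g(\mathrm{geod}(p,q))$, with transverse measure carried along by the homeomorphism. As $\tilde g|_{\partial\mathbb{H}}$ is injective the two endpoints remain distinct, so $g_*\mathcal{F}_\varphi$ is proper.

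The crucial quantitative point is that the planar Dirichlet integral is quasi-invariant under quasiconformal maps: if $g$ is $K$-quasiconformal, then
$$
D_Y(g_*\mathcal{F}_\varphi)\le K\,D_X(\mathcal{F}_\varphi)=K\int_X|\varphi|<\infty ,
$$
since the Dirichlet integral is a conformal invariant and is distorted by at most the factor $K$ under $g$. Hence $g_*\mathcal{F}_\varphi$ is an integrable partial foliation on $Y$ realizing $\hat g(\mu)$, and by definition this places $\hat g(\mu)$ in $ML_{\mathrm{int}}(Y)$, completing the inclusion.

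The step I expect to be the main obstacle is the regularity demanded in condition (iii) of Definition \ref{def:partial-fol}: a quasiconformal map is only absolutely continuous on lines and differentiable almost everywhere, so the composed functions $v_i\circ g^{-1}$ need not be $C^1$ and the images $g(v_i^{-1}(c))$ are a priori only topological, not differentiable, arcs. I would handle this exactly as in the proof of Lemma \ref{lem:lim-not-zero}, where it is noted that the integrals involved only require the foliation to be differentiable off a countable union of arcs, working with the $W^{1,2}$-representatives $v_i\circ g^{-1}$ whose Dirichlet energy is controlled as above; alternatively one smooths $g_*\mathcal{F}_\varphi$ to a genuinely differentiable partial foliation realizing the same measured lamination while keeping the Dirichlet integral finite. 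In either case the straightening and the energy bound are unaffected, so the argument goes through.
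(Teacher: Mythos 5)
Your proposal follows the same overall strategy as the paper: push the partial foliation forward by the quasiconformal map, observe that the straightening of the pushed-forward foliation is $\hat g(\mu)$, bound the Dirichlet integral by $K$ times the original, and get the reverse inclusion by applying the argument to $g^{-1}$. The one substantive point where you diverge is the regularity obstacle you correctly isolate at the end: a quasiconformal $g$ is only differentiable a.e., so $v_i\circ g^{-1}$ need not satisfy condition (iii) of Definition \ref{def:partial-fol} and the image leaves need not be differentiable arcs. The paper resolves this more cleanly than either of your proposed fixes: it replaces $g$ by a \emph{real-analytic} quasiconformal map $h$ homotopic to $g$ (Douady--Earle \cite{DE}), so that $\{v_i\circ h^{-1}\}$ on $\{h(U_i)\}$ is a genuine partial measured foliation in the sense of the definition, and the bound $\mathcal{D}_Y(h_*\mathcal{F}_\mu)\le K\,\mathcal{D}_X(\mathcal{F}_\mu)$ applies with $K$ the constant of $h$. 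Since homotopic maps induce the same bijection $\hat g$ on geodesics, nothing is lost by the replacement. Your alternatives (working with $W^{1,2}$ representatives, or smoothing $g_*\mathcal{F}_\varphi$ after the fact) are plausible but would require you to verify that the modified foliation still satisfies the pointwise surjectivity of $dv_i$, still straightens to $\hat g(\mu)$, and remains proper -- precisely the bookkeeping the Douady--Earle substitution avoids. Also note that invoking Theorem \ref{thm:main} to produce $\varphi$ with $\mu_\varphi=\mu$ is unnecessary: by definition of $ML_{\mathrm{int}}(X)$ some integrable partial foliation realizing $\mu$ already exists, and that is all the paper uses.
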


\begin{proof}
In \cite{Saric20}, it is established that a homeomorphism $g:X\to Y$ between two Riemann surfaces whose fundamental groups are of the first kind induces a natural bijection $\hat{g}:G(X)\to G(Y)$. The push forward of the measures from $G(X)$ to $G(Y)$ under the bijection $\hat{g}:G(X)\to G(Y)$ induces a bijection between the space of measures on $G(X)$ and $G(Y)$. Since $\hat{g}:G(X)\to G(Y)$ maps support to support, it follows that $\hat{g}$ preserves the spaces of measured laminations. 

It remains to prove that $
\hat{g}({ML}_{\mathrm{int}}(X))={ML}_{\mathrm{int}}(Y)$. A quasiconformal map $g:X\to Y$ is homotopic to a real analytic quasiconformal map $h:X\to Y$ by Douady-Earle \cite{DE}. For $\mu\in {ML}_{\mathrm{int}}(X)$, let $\mathcal{F}_{\mu}$ be a partial foliation of $X$ representing $\mu$. If $\{v_i\}_i$ is the family of real-valued functions defined on a family $\{ U_i\}_i$ of open neighborhoods in $X$ which can pairwise intersect along boundaries, then $\{ v_i\circ h^{-1}\}_i$ and $\{ h(U_i)\}$ form a partial foliation of $Y$ and let $K$ be the quasiconformal constant of $h$. We denote by $g(\mathcal{F}_{\mu})$ the corresponding partial foliation. The Dirichlet integral satisfies (see \cite{Ahlfors})
$$
\mathcal{D}_Y(g(\mathcal{F}_{\mu}))\leq K\mathcal{D}_X(\mathcal{F}_{\mu})<\infty 
$$
and $g(\mathcal{F}_{\mu})$ corresponds to a unique geodesic lamination $g(\mu )\in {ML}_{\mathrm{int}}(Y)$. Since $g$ is invertible, 
we obtain the bijection.
\end{proof}

\section{Approximations by Jenkins-Strebel differentials and Kerckhoff's formula for the Teichm\"uller metric}

In most of this section (except Theorem \ref{thm:ext-l-cont}), the infinite Riemann surface $X=\mathbb{H}/\Gamma$ is assumed to be parabolic, i.e., $X\in O_G$. We first establish the density of the holomorphic quadratic differentials whose non-singular horizontal trajectories make a single cylinder on $X$. We note whether simple closed geodesics with positive weights are dense in the space of bounded measured laminations on $X$ is still unknown.

\begin{definition}
An integrable holomorphic quadratic differential $\varphi$ on $X$ is called a {\it Jenkins-Strebel} differential if its non-singular horizontal trajectories are all closed, homotopic, and make up an open cylinder on $X$. 
\end{definition}

The $|\varphi |$-area of the complement of the cylinder of a Jenkins-Strebel differential is zero (see \cite{Strebel}). For every simple closed geodesic $\gamma$ in $X$ and $b>0$, there is a unique Jenkins-Strebel differential $\varphi_{\gamma}$ with a cylinder homotopic to $\gamma$ and height $b>0$ (see \cite[Theorem 21.1]{Strebel}). We prove

\begin{thm}
\label{thm:approx-JS}
Let $X\in O_G$ be an infinite Riemann surface and $\varphi\in A(X)$. Then there exists a sequence $\{\varphi_n\}_n$ of Jenkins-Strebel differentials on $X$ such that
\begin{enumerate}
\item $\int_X|\varphi_n|\leq\int_X|\varphi |$ for all $n$, and
\item $\varphi_n$ converge to $\varphi$ uniformly on compact subsets of $X$. 
\end{enumerate}
\end{thm}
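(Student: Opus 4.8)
**The plan is to approximate $\varphi$ by Jenkins-Strebel differentials using the machinery built in the earlier sections, exploiting parabolicity in an essential way.** The central idea is that the measured lamination $\mu_\varphi$ straightened from $\varphi$ lives in $ML_{\mathrm{int}}(X)$, and one wants to approximate $\mu_\varphi$ by weighted simple closed geodesics $b_n\gamma_n$ whose realizing differentials are precisely the Jenkins-Strebel differentials $\varphi_{\gamma_n}$. Rather than attempt a direct density statement for weighted simple closed curves (which the excerpt explicitly notes is unknown), I would instead run the exhaustion argument from \S\ref{sec:double} tailored so that on each finite piece the approximating object is a \emph{single cylinder}. Parabolicity enters because, by Theorem \ref{thm:par-ch-cross-cuts}, the cross-cut trajectories of $\varphi$ have zero area, so almost every horizontal trajectory is either closed or recurrent and stays within the body of the surface; this is what prevents mass from escaping to infinity and allows a single closed curve to capture the relevant height data.

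First I would fix an exhaustion $\{X_n'\}_n$ of $X$ by finite-area subsurfaces as in Definition \ref{def:X_n'}, and on each $X_n'$ consider the heights $h_{\varphi,X_n'}(\gamma)$. On a \emph{compact} surface Masur's theorem gives density of Jenkins-Strebel differentials in the pointwise topology, so on each doubled surface $\widehat{X}_n'$ one obtains a Jenkins-Strebel differential $\psi_n$ (with a single cylinder homotopic to some simple closed geodesic $\gamma_n$) approximating $\widehat{\varphi}_n$ to within $1/n$ on compacta while keeping its $L^1$-mass bounded by $\int_{\widehat{X}_n'}|\widehat{\varphi}_n|$. Restricting to $X_n'$ and then building a genuine Jenkins-Strebel differential $\varphi_n$ on all of $X$ via Theorem \ref{thm:main}---realizing the measured lamination $b_n\gamma_n$ by a single-cylinder differential using the existence-and-uniqueness for $\varphi_{\gamma_n}$ cited just before the statement---produces the candidate sequence. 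The $L^1$-bound (1) should follow from the chain of inequalities $\int_X|\varphi_n|\le D(\widehat{\mathcal{F}}_n)/2\le D_X(\mathcal{F}_\varphi)=\int_X|\varphi|$ already assembled in Lemma \ref{lem:lim-not-zero}, since the Jenkins-Strebel realization minimizes mass among differentials with the prescribed heights.

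For the uniform convergence (2), I would argue that $h_{\varphi_n}(\gamma)\to h_\varphi(\gamma)$ for every simple closed geodesic $\gamma$, then invoke Proposition \ref{prop:unif_conv}: the inverse straightening map is continuous for the topology given by convergence of heights plus a uniform Dirichlet bound. The uniform Dirichlet bound is exactly (1), and the height convergence should come from combining the compact-surface height control (Strebel's \cite[Theorem 24.7]{Strebel}, as in Lemma \ref{lem:heights=}) with the fact that, under $X\in O_G$, the heights $h_{\varphi,X_n'}(\gamma)$ converge to $h_\varphi(\gamma)$ without loss to cross-cuts. By the Heights Theorem \cite{Saric-heights}, any uniform-on-compacta limit of $\varphi_n$ with the same heights as $\varphi$ must equal $\varphi$, forcing convergence of the full sequence.

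\textbf{The main obstacle} I anticipate is ensuring that a \emph{single} cylinder suffices on each finite piece while still controlling heights against $\varphi$ globally. Masur's density produces Jenkins-Strebel differentials with one cylinder on the doubled \emph{finite} surface, but transplanting this to a single-cylinder differential on the infinite surface $X$ requires that the approximating lamination be a single weighted simple closed geodesic whose height profile matches $\varphi$ on all test curves simultaneously. The delicate point is the uniformity of the height approximation across the infinitely many simple closed geodesics $\gamma$ as $n\to\infty$: one must show no height mass is lost at infinity, and this is precisely where parabolicity (via Theorem \ref{thm:par-ch-cross-cuts}, the vanishing of cross-cut area) is indispensable. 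I expect the technical core of the argument to be verifying that the erasure-of-boundary-parallel-leaves estimate from Lemma \ref{lem:lim-not-zero} (inequality \eqref{eq:height-ineq}) can be run uniformly in $\gamma$ under the zero-cross-cut-area hypothesis, so that the limiting heights genuinely reproduce those of $\varphi$ rather than a strictly smaller lamination.
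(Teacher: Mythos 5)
Your overall skeleton matches the paper's: exhaust $X$ by finite-area pieces, use parabolicity (via Theorem \ref{thm:par-int}) to discard the escaping trajectories at negligible cost in area and transverse measure, realize the truncated data on a doubled finite surface, transfer a weighted simple closed curve back to $X$ via Theorem \ref{thm:main}, bound the $L^1$-norms by the minimal-norm property, and conclude with normal families plus the Heights Theorem. Your identification of where $X\in O_G$ enters is also correct.

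The genuine gap is in the step that produces the single weighted simple closed curve. You invoke Masur's density theorem \cite{Masur} on the doubled surface $\widehat{X}_n'$ to approximate $\widehat{\varphi}_n$ by a one-cylinder differential. But the core curve of that cylinder is a simple closed curve on the \emph{double}, and nothing forces it to lie in (or be isotopic into) $X_n'$; generically it will cross the symmetry locus $\partial X_n'$ and therefore does not correspond to any simple closed curve on $X$. So there is no lamination $b_n\gamma_n$ on $X$ to feed into Theorem \ref{thm:main}, and the transplantation step breaks down. You sense this difficulty in your ``main obstacle'' paragraph but do not resolve it. The paper's resolution is to work at the level of measured laminations rather than differentials: straighten the truncated partial foliation $\mathcal{F}_{\epsilon}$ to a compactly supported measured lamination $\mu_{\epsilon}\in ML_0(X_k)$, approximate it by weighted simple closed geodesics $w_i\delta_{g_i}$ with $g_i\subset X_k$ using Penner--Harer \cite[Theorem 3.1.3]{PennerHarer}, and only then double the \emph{curve} to $w_i\delta_{g_i}+w_i\delta_{\bar{g}_i}$; by the mirror symmetry the Hubbard--Masur realization has one cylinder in each half, so its restriction to $X_k$ is Jenkins--Strebel and $g_i$ is an honest curve on $X$ that Theorem \ref{thm:main} can realize. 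The norm bound $\int_X|\varphi_n|\leq\int_X|\varphi|$ is then obtained from the continuity of the Hubbard--Masur map together with a slight decrease of the weights $w_i$, rather than from Masur's theorem, whose statement by itself does not provide the $L^1$ control you assume.
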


\begin{proof}
Consider a geodesic pants decomposition of $X$ and an exhaustion $\{X_k\}_k$ of $X$ by finite area surfaces with geodesic boundary obtained by unions of the pairs of pants from the decomposition. We repeat the construction from Section 3.1 to introduce related exhaustion of $X$ such that the horizontal foliation of $\varphi$ is transverse to the boundary curves. For simplicity, denote this exhaustion by $\{X_k\}_k$ as well.

Given $\epsilon >0$, there exists $k_0=k_0(\epsilon )>0$ such that $\int_{X\setminus X_{k_0}} |\varphi |<\epsilon$. Consider the horizontal trajectories of $\varphi$ that intersect $X_{k_0}$. 
Since $X\in O_G$, by Theorem \ref{thm:par-int} the $|\varphi |$-area of the subset of the horizontal trajectories that connect $X_{k_0}$ and $X\setminus X_k$ is going to zero as $k\to \infty$ for any fixed $k_0$. There is a positive lower bound to the $\sqrt{|\varphi |}$-length of horizontal trajectories of $\varphi$ that connects $X_{k_0}$ and  $X\setminus X_k$ because each horizontal trajectory crosses a fixed collar of a boundary component of $X_{k_0}$ (the $|\sqrt{\varphi}|$-distance between boundary components of a collar is positive). Since the $|\varphi |$-area of the set of trajectories connecting $X_{k_0}$ and $X\setminus X_k$ is going to zero, it follows that the transverse measure of the above trajectories is going to zero as $k\to\infty$. Choose $k=k(\epsilon )$ large enough that the area and the transverse measure of the above trajectories are less than $\epsilon$. 

Define $\mathcal{F}_{\epsilon}$ to be the partial foliation consisting of the horizontal trajectories of $\varphi$ intersecting $X_{k_0}$ and not leaving $X_k$, where $k_0=k_0(\epsilon )$ and $k=k(\epsilon )$ are as above. The restriction of $\varphi$ to $X_k$ is an integrable holomorphic quadratic differential. By Strebel \cite[\S 13]{Strebel}, the trajectory structure of $\varphi$ consists of at most countably many maximal cylinders, cross-cut strips, and spiral sets up to a set of the $|\varphi |$-area zero.

Therefore, up to a set of the $|\varphi |$-area zero, any leaf of $\mathcal{F}\cap X_k$ which is not in $\mathcal{F}_{\epsilon}$ is completely contained in $X_k\setminus X_{k_0}$ (belongs to a spiral set) or it connects $\partial X_{k_0}$ with $\partial X_k$ (belongs to a strip of cross-cuts on $X_k\setminus X_{k_0}$) or it connects $\partial X_{k_k}$ to itself (belongs to a strip of cross-cuts on $X_k$). 
The union of the countably many of these sets is measurable. By erasing the union of these strips, $\mathcal{F}_{\epsilon}$ remains a partial measured foliation of $X_k$.

It follows that
$$
D(\mathcal{F}_{\epsilon})\leq\int_{X_k}|\varphi |< \int_X|\varphi |
$$ 
and that the heights $h_{\mathcal{F}_{\epsilon}}(\gamma )$ converge to $h_{\varphi}(\gamma )$ for all simple closed geodesics $\gamma$ in $X$ as $\epsilon\to 0$. 
By doubling argument as in the proof of Theorem 3.3 using Hubbard-Masur \cite{HubbardMasur} and invariance under the reflection in the boundary, there exists an integrable holomorphic quadratic differential $\psi_{\epsilon , k}$ on $X_k$ which realizes $\mathcal{F}_{\epsilon}$ with $\int_{X_k}|\psi_{\epsilon ,k}|\leq D(\mathcal{F}_{\epsilon})\leq\int_{X_k}|\varphi |< \int_X|\varphi |$ for all $\epsilon >0$ and $k=k(\epsilon )$, where the first inequality is proved in \cite[Theorem 24.2]{Strebel}, and it was used in the case of a double Riemann surface in the proof of  Lemma \ref{lem:lim-not-zero}.


The partial foliation $\mathcal{F}_{\epsilon}$ can be straightened to a (hyperbolic) measured (geodesic) lamination $\mu_{\epsilon}$ on $X_k$ whose support does not intersect the boundary of $X_k$. By Penner-Harer \cite[Theorem 3.1.3]{PennerHarer}, there exists a sequence of simple closed geodesics $\{ g_i\}_i$ and weights $\{ w_i\}_i$ such that the weighted simple closed geodesics $w_i\delta_{g_i}$ converge to $\mu_{\epsilon}$ as $i\to\infty$ in the weak* topology on the space of compactly supported measured laminations $ML_0(X_k)$, where $\delta_{g_i}$ is the Dirac measure supported on $g_i$. 

We apply the doubling argument again. Let $\widehat{X}_k$ be the double of $X_k$ and let $\widehat{\mu}_{\epsilon}$ be the measured lamination which equals $\mu_{\epsilon}$ on $X_k$ and its mirror image $\bar{\mu}_{\epsilon}$ on $\widehat{X}_k\setminus X_k$.
The measure laminations $\mu_{\epsilon}$ and $\bar{\mu}_{\epsilon}$ are separated by $\partial X_k$. The mirror image of $g_i$ is denoted by $\bar{g}_i$. By the symmetry, we have that $w_i\delta_{g_i}+w_i\delta_{\bar{g}_i}$ converges to $\widehat{\mu}_{\epsilon}$ as $i\to\infty$ in the weak* topology on $ML(\widehat{X}_k)$. 

By  Hubbard-Masur \cite{HubbardMasur}, there exists an integrable holomorphic quadratic differential $\widehat{\psi}_{i,k}$ on $\widehat{X}_k$ that realizes $w_i\delta_{g_i}+w_i\delta_{\bar{g}_i}$. The non-singular leaves of the differential $\widehat{\psi}_{i,k}$ make exactly two cylinders homotopic to $g_i$ and $\bar{g}_i$. Since $\widehat{\psi}_{i,k}$ is invariant under the mirror symmetry, and $g_i$ and $\bar{g}_i$ are permuted, it follows that the cylinder homotopic to $g_i$ is in $X_k$. 

In particular, the holomorphic quadratic differential $\psi_{i,k}$ obtained by the restriction of $\widehat{\psi}_{i,k}$ to $X_k$ is Jenkins-Strebel.  By the continuity in \cite{HubbardMasur}, we have that $\int_{X_k}|\psi_{i,k}|\to\int_{X_k}|\psi_{\epsilon ,k}|$ as $i\to\infty$. Thus by slightly decreasing $w_i$, we can arrange that $w_i\delta_{g_i}$ converges to $\mu_{\epsilon}$ in the weak* topology,  $\int_{X_k}|\psi_{i,k}|\to\int_{X_k}|\psi_{\epsilon ,k}|$ as $i\to\infty$ and $\int_{X_k}|\psi_{i,k}|\leq \int_{X_k}|\psi_{\epsilon ,k}|$ for all $i$.

By Theorem \ref{thm:main}, there exists an integrable holomorphic quadratic differential $\varphi_{i,k}$ on $X$ that realizes $w_i\delta_{g_i}$ and satisfies $\int_{X}|\varphi_{i,k}|\leq \int_{X_k}|\psi_{\epsilon ,k}|< \int_X|\varphi |$. The first inequality follows holds because $\psi_{i,k}$ is supported on $X_k$ and its cylinder can be thought of as a partial measured foliation on $X$ realizing $w_i\delta_{g_i}$, which gives $\int_X| \varphi_{i,k}|\leq \int_X| \psi_{i,k}|$.

Choose $\epsilon =\frac{1}{n}$ and by the diagonal process, there exist $i=i(n)$ and 
$\varphi_n\in A(X)$ whose horizontal foliation is equivalent to that of $\psi_{i(n),k}\in A(X_k)$ such that the heights of $\varphi_{n}$ are converging to the heights of $\varphi$. In particular, $\varphi_n$ is a Jenkins-Strebel holomorphic quadratic differential on $X$.

In conclusion, since $\int_X |\varphi_n|\leq\int_{X_k}|\psi_{i,k}|\leq\int_X|\varphi|$ there is a subsequence of $\varphi_n$ that converges uniformly on compact subsets to an integrable holomorphic quadratic differential on $X$. Since the limiting quadratic differential heights are identical to the heights of $\varphi$, the Heights Theorem \cite{Saric-heights} gives that the limit is $\varphi$. The theorem is proved.
\end{proof}

Theorem \ref{thm:approx-JS} and Lemma \ref{lem:L^1-conv} immediately give the following corollary.

\begin{cor}
\label{cor:jenkins-dense-L1}
Let $X\in O_G$ be an infinite Riemann surface and $\varphi\in A(X)$. Then there exists a sequence $\{\varphi_n\}_n$ of Jenkins-Strebel differentials on $X$ such that
$$
\int_X|\varphi -\varphi_n|\to 0
$$
as $n\to\infty$.
\end{cor}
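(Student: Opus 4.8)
The plan is to derive Corollary \ref{cor:jenkins-dense-L1} directly from Theorem \ref{thm:approx-JS} by invoking the characterization of $L^1$-convergence provided by Lemma \ref{lem:L^1-conv}. Theorem \ref{thm:approx-JS} already hands us a sequence $\{\varphi_n\}_n$ of Jenkins-Strebel differentials satisfying two properties: the uniform norm bound $\int_X|\varphi_n|\le\int_X|\varphi|$ for all $n$, and uniform convergence $\varphi_n\to\varphi$ on compact subsets of $X$. The whole point of Lemma \ref{lem:L^1-conv} is that these are precisely the two hypotheses (condition (1) and condition (2) of the lemma) needed to conclude $L^1$-convergence, so the corollary is essentially a matching of hypotheses.

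First I would observe that condition (2) of Theorem \ref{thm:approx-JS}, uniform convergence on compact subsets, is exactly condition (1) of Lemma \ref{lem:L^1-conv}. Next I would note that condition (1) of Theorem \ref{thm:approx-JS} gives $\int_X|\varphi_n|\le\int_X|\varphi|$ for every $n$, and taking the $\limsup$ as $n\to\infty$ yields
$$
\limsup_{n\to\infty}\int_X|\varphi_n|\le\int_X|\varphi|,
$$
which is precisely condition (2) of Lemma \ref{lem:L^1-conv}. With both hypotheses of the lemma verified, the ``if'' direction of Lemma \ref{lem:L^1-conv} immediately gives that $\varphi_n\to\varphi$ in the $L^1$-norm, i.e.
$$
\int_X|\varphi-\varphi_n|\to 0
$$
as $n\to\infty$, which is the assertion of the corollary.

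There is really no serious obstacle here, since the corollary is a formal consequence of two results already proved. If anything, the only point meriting a word of care is that Lemma \ref{lem:L^1-conv} is stated with a $\limsup$ in condition (2), whereas Theorem \ref{thm:approx-JS} gives the uniform bound $\int_X|\varphi_n|\le\int_X|\varphi|$ for each individual $n$; the former follows from the latter trivially by passing to the $\limsup$, so no subsequence extraction or additional estimate is needed. The proof is therefore a one-line citation, and I would simply write it as such.
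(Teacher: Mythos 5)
Your proposal is correct and matches the paper's own argument exactly: the corollary is stated there as an immediate consequence of Theorem \ref{thm:approx-JS} combined with the ``if'' direction of Lemma \ref{lem:L^1-conv}, with the uniform bound $\int_X|\varphi_n|\leq\int_X|\varphi|$ supplying the $\limsup$ condition. Nothing further is needed.
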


We use the existence of the above approximation to prove an analogous formula to Kerckhoff's formula (for the distance in the Teichm\"uller space of closed Riemann surfaces) for the case of parabolic Riemann surfaces. 

Let $C$ be an annular domain in a Riemann surface $X$. Then $C$ is conformal to a Euclidean annulus $\{ z\in\mathbb{C}: 0\leq r_1<|z|<r_2\leq \infty\}$. The {\it modulus} of $C$ is
$$
\mathrm{mod}C=\frac{1}{2\pi}\log\frac{r_2}{r_1}
$$
when $0<r_1<r_2<\infty$ and
$$
\mathrm{mod}C=\infty
$$
when $r_1=0$ and/or $r_2=\infty$. When $C$ is represented by a Euclidean cylinder with the height $h$ and the circumference $l$, then $\mathrm{mod} C=h/l$.

We recall the definition of the extremal length of a simple closed geodesic on $X$.

\begin{definition}
Let $\gamma$ be a simple closed curve on a Riemann surface $X$. The {\it extremal length} $\mathrm{ext}(\gamma )$ of $\gamma$ is 
$$
\mathrm{ext}(\gamma )=\inf_C 1/\mathrm{mod} C,
$$
where $C$ ranges over all annular domains in $X$ homotopic to $\gamma$. 
\end{definition}

For $r>0$, we extend the extremal length to $r\cdot \gamma$ by the formula
$$
\mathrm{ext}(r\cdot\gamma)=r^2\mathrm{ext}(\gamma ).
$$

Kerckhoff observed the following lemma \cite{Kerckhoff}. 

\begin{lem}
Let $\gamma$ be a simple closed geodesic on an infinite Riemann surface $X=\mathbb{H}/\Gamma$ with $\Gamma$ of the first kind and let $r>0$. If the union of all non-singular trajectories of $\varphi\in A(X)$ is a single cylinder in the homotopy class of $\gamma$ of height $r$, then
$$
\mathrm{ext}(r\cdot\gamma )=\int_X |\varphi |.
$$ 
\end{lem}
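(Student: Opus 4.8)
The plan is to reduce the identity to an elementary length--area computation in the natural parameter of $\varphi$ on its characteristic cylinder, and to identify the extremal length of $\gamma$ with the reciprocal modulus of that cylinder.

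First I would fix the natural parameter $w=u+iv$ of $\varphi$ on the cylinder $C$ formed by the non-singular horizontal trajectories. In this parameter $C$ is a Euclidean cylinder in which the closed horizontal trajectories $v=\mathrm{const}$ are circles of a common circumference $a>0$ (their $|du|$-length), while the transverse coordinate $v$ ranges over an interval whose length is the height of the cylinder, namely $r$. Since $\varphi$ is Jenkins--Strebel, the $|\varphi|$-area of $X\setminus C$ is zero, so
$$
\int_X|\varphi|=\int_C du\,dv=a\cdot r .
$$

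Next I would show $\mathrm{ext}(\gamma)=a/r$. For the upper bound, $C$ is itself an annular domain homotopic to $\gamma$ with $\mathrm{mod}\,C=r/a$ (height over circumference), whence $\mathrm{ext}(\gamma)\le 1/\mathrm{mod}\,C=a/r$. For the lower bound I would use the metric (Ahlfors--Beurling) description of extremal length, testing with the flat conformal metric $\rho=|\varphi|^{1/2}$, for which $\rho\,|dz|=|dw|$ and the total area is $A_\rho=\int_X|\varphi|=ar$. Any closed curve $\gamma'$ freely homotopic to $\gamma$ may be pushed off the measure-zero critical graph into $C$ without increasing its $\rho$-length, and once inside $C$ it winds once around the cylinder, so the total variation of the (multivalued) coordinate $u$ along $\gamma'$ is at least $a$; hence $\ell_\rho(\gamma')=\int_{\gamma'}|dw|\ge\int_{\gamma'}|du|\ge a$, with equality for a core trajectory. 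Thus the $\rho$-length of the homotopy class is exactly $a$, and
$$
\mathrm{ext}(\gamma)\ge\frac{\big(\inf_{\gamma'\sim\gamma}\ell_\rho(\gamma')\big)^2}{A_\rho}=\frac{a^2}{ar}=\frac{a}{r}.
$$
Combining the two bounds gives $\mathrm{ext}(\gamma)=a/r$.

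Finally, using the scaling relation $\mathrm{ext}(r\cdot\gamma)=r^2\,\mathrm{ext}(\gamma)$ I would conclude
$$
\mathrm{ext}(r\cdot\gamma)=r^2\cdot\frac{a}{r}=ar=\int_X|\varphi|,
$$
as claimed. I expect the main obstacle to be the lower bound for $\mathrm{ext}(\gamma)$: one must rule out that some competing annulus or conformal metric homotopic to $\gamma$ beats the characteristic cylinder, even though on an infinite surface there is in principle ample room in $X\setminus C$. This is exactly what the metric argument above settles, since it uses only the single finite-area flat metric $|\varphi|$ together with the fact that every curve homotopic to $\gamma$ winds around $C$; the equivalence of the metric and modulus formulations of extremal length for the homotopy class of a simple closed curve is then what pins down $\mathrm{ext}(\gamma)=a/r$.
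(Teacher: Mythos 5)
Your proposal is correct in substance, but it takes a genuinely different route from the paper. The paper's proof is essentially a one-line citation: it invokes Jenkins' theorem (see also Strebel, Theorem 21.1) that the characteristic cylinder $C$ of a Jenkins--Strebel differential has \emph{maximal} modulus among all annular domains in $X$ homotopic to $\gamma$, so that $\mathrm{ext}(\gamma)=1/\mathrm{mod}\,C=l/r$ directly from the paper's definition of extremal length, and then concludes by the same computation $\mathrm{ext}(r\cdot\gamma)=r^2\cdot l/r=rl=\int_X|\varphi|$. You instead reprove this maximality in the case at hand by the Beurling--Ahlfors length--area method, testing with $\rho=|\varphi|^{1/2}$. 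That is more self-contained, but it obliges you to supply two bridges that the paper avoids. First, the paper defines $\mathrm{ext}(\gamma)$ as $\inf_C 1/\mathrm{mod}\,C$ over annuli, while your lower bound is for the metric formulation; you only need the easy inequality $1/\mathrm{mod}\,C\geq L_\rho(\gamma)^2/A_\rho$ for every annulus $C$ homotopic to $\gamma$ and every admissible $\rho$, which follows by restricting $\rho$ to $C$ and using that the core curves of $C$ are among the competitors for $L_\rho(\gamma)$ --- worth a sentence. Second, and more substantively, the step ``any $\gamma'\sim\gamma$ may be pushed off the critical graph into $C$ without increasing its $\rho$-length'' is the one loose point: the critical graph contains entire critical trajectories along which a competing curve can travel, and $du$ does not extend single-valuedly across the zeros, so the inequality $\ell_{|\varphi|^{1/2}}(\gamma')\geq a$ for curves not contained in $C$ is not literally a perturbation argument. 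The statement you need is exactly the minimal length property of closed trajectories (Strebel, Theorem 20.3, valid for integrable differentials on arbitrary Riemann surfaces); with that citation in place of the informal push-off, your argument is complete and yields the same conclusion as the paper's.
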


\begin{proof}
Let $\varphi$ be the holomorphic quadratic differential with a single cylinder in the homotopy class of $\gamma$ whose height is $r$. 
The cylinder $C$ of $\varphi$ has height $r$ by the assumption and denotes its length by $l>0$. Jenkins \cite{Jenkins} proved that the cylinder $C$ of $\varphi$ has the maximal modulus among all annular domains in $X$ (see also Strebel \cite[p 107, Theorem 21.1]{Strebel}). Then we have
$$
\mathrm{ext}(r\cdot\gamma )=r^2\mathrm{ext}(\gamma )=r^2\frac{1}{\mathrm{mod}C}=r^2\frac{1}{(r/l)}=rl=\int_X|\varphi |.
$$
\end{proof}

Using the equivalent definition of $\mathrm{ext}(r\cdot\gamma )$ from the above lemma, we introduce the extremal length for any $\mu\in ML_{\mathrm{int}}(X)$.

\begin{definition}
Let $\mu\in ML_{\mathrm{int}}(X)$ and $\varphi_{\mu}\in ML_{\mathrm{int}}(X)$ the integrable holomorphic quadratic differential whose horizontal foliation realizes $\mu$. The {\it extremal length} of $\mu$ is given by
$$
\mathrm{ext}(\mu )=\int_X |\varphi_{\mu}|. 
$$
\end{definition}

We prove that the extremal length is a continuous function on $ML_{\mathrm{int}}(X)$ without the assumption that $X$ is parabolic.

\begin{thm}
\label{thm:ext-l-cont}
Let $X=\mathbb{H}/\Gamma$ be an infinite Riemann surface with $\Gamma$ of the first kind. Then
the extremal length 
$$
\mathrm{ext}:ML_{\mathrm{int}}(X)\to\mathbb{R}
$$
is a continuous function. 
\end{thm}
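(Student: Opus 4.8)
The plan is to exploit Theorem \ref{thm:main}, which identifies $ML_{\mathrm{int}}(X)$ with $A(X)$ through the straightening bijection $\mu\mapsto\varphi_{\mu}$, together with the defining formula $\mathrm{ext}(\mu)=\int_X|\varphi_{\mu}|$. Since the topology on $ML_{\mathrm{int}}(X)$ is the one transported from the $L^1$-topology on $A(X)$, a sequence $\mu_n\to\mu$ corresponds exactly to $\varphi_{\mu_n}\to\varphi_{\mu}$ in $L^1$, so the assertion reduces to showing that the total mass $\varphi\mapsto\int_X|\varphi|$ varies continuously along such sequences. I would first record that $\mathrm{ext}$ is finite and real-valued because each $\varphi_{\mu}\in A(X)$ is integrable, so the target $\mathbb{R}$ is correct.

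The heart of the argument is to prove $\int_X|\varphi_{\mu_n}|\to\int_X|\varphi_{\mu}|$ by separating it into the two semicontinuity bounds and reading them off the sequential description of $L^1$-convergence in Lemma \ref{lem:L^1-conv}. For the lower bound I would use only that $\varphi_{\mu_n}\to\varphi_{\mu}$ uniformly on compact subsets, whence $|\varphi_{\mu_n}(z)|\to|\varphi_{\mu}(z)|$ pointwise, and apply Fatou's lemma to the nonnegative integrands to obtain $\int_X|\varphi_{\mu}|\le\liminf_n\int_X|\varphi_{\mu_n}|$, that is $\mathrm{ext}(\mu)\le\liminf_n\mathrm{ext}(\mu_n)$. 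I would emphasize that this half needs only uniform convergence on compacta, which is supplied in greater generality by Proposition \ref{prop:unif_conv}. For the upper bound I would invoke condition (2) of Lemma \ref{lem:L^1-conv}, namely $\limsup_n\int_X|\varphi_{\mu_n}|\le\int_X|\varphi_{\mu}|$, which is part of what it means for $\mu_n\to\mu$ in the given topology; this reads $\limsup_n\mathrm{ext}(\mu_n)\le\mathrm{ext}(\mu)$. Combining the two inequalities gives $\mathrm{ext}(\mu_n)\to\mathrm{ext}(\mu)$, which is the continuity claimed.

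The hard part, conceptually, is the upper semicontinuity, and it is worth isolating \emph{why} it holds. Lower semicontinuity is automatic once $\mathrm{ext}$ is identified with an $L^1$-mass and the differentials converge locally; the genuine content is that no mass of $\varphi_{\mu_n}$ escapes to the ends of $X$ in the limit, which is precisely the role of the $\limsup$ condition in Lemma \ref{lem:L^1-conv}. This feature is exactly what distinguishes the $L^1$-topology from the coarser convergence by heights, and it is the reason I would not phrase the proof as a bare invocation of a homeomorphism. To make this concrete I would exhibit the obstruction: a family of single-cylinder (Jenkins--Strebel) differentials supported on simple closed geodesics $\gamma_n$ marching out an end of $X$, normalized so that $\int_X|\varphi_{\mu_n}|\equiv 1$ while $h_{\varphi_{\mu_n}}(\gamma)\to 0$ for every fixed simple closed geodesic $\gamma$. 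Such a sequence has uniformly bounded Dirichlet energy and heights tending to those of the zero lamination, yet $\mathrm{ext}(\mu_n)\equiv 1$ does not tend to $\mathrm{ext}(0)=0$; this shows that continuity of $\mathrm{ext}$ genuinely fails for the uniform-on-compacta/heights convergence of Proposition \ref{prop:unif_conv} and truly requires the control of escaping mass built into the $L^1$-topology.

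Finally, I would give the one-line verification as a consistency check: once $\varphi_{\mu_n}\to\varphi_{\mu}$ in $L^1$, the reverse triangle inequality $\bigl|\int_X|\varphi_{\mu_n}|-\int_X|\varphi_{\mu}|\bigr|\le\int_X|\varphi_{\mu_n}-\varphi_{\mu}|$ forces $\mathrm{ext}(\mu_n)\to\mathrm{ext}(\mu)$ at once; the semicontinuity decomposition is the more informative route because it pinpoints which hypothesis controls which inequality. No separate treatment of the parabolic and non-parabolic cases is required, since the argument uses only the $L^1$-structure, Proposition \ref{prop:unif_conv}, and Fatou's lemma, and never appeals to Theorem \ref{thm:par-int}; this is consistent with Theorem \ref{thm:ext-l-cont} being stated without the hypothesis $X\in O_G$.
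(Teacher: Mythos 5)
Your proposal is correct and follows the same route as the paper: the topology on $ML_{\mathrm{int}}(X)$ is by definition transported from the $L^1$-topology on $A(X)$, and $\mathrm{ext}(\mu)=\int_X|\varphi_\mu|$ is just the $L^1$-norm read through this identification, so continuity is immediate from the reverse triangle inequality $\bigl|\int_X|\varphi_{\mu_n}|-\int_X|\varphi_\mu|\bigr|\le\int_X|\varphi_{\mu_n}-\varphi_\mu|$ --- exactly your closing one-line verification, which is the entirety of the paper's proof. Your semicontinuity decomposition via Fatou and Lemma \ref{lem:L^1-conv}, and the Jenkins--Strebel example showing that the weaker heights-plus-bounded-Dirichlet convergence of Proposition \ref{prop:unif_conv} would not suffice, are correct and instructive elaborations but not needed for the argument.
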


\begin{proof}
Recall that the topology on $ML_{\mathrm{int}}(X)$ is induced by identifying it with $A(X)$. The $ L^1$-norm induces a topology on $A(X)$. The continuity is immediate by the definition of the extremal length.
\end{proof}

By Theorem \ref{thm:ext-l-cont} and Corollary \ref{cor:jenkins-dense-L1}, we immediately have

\begin{thm}
Let $X\in O_G$. Then for each $\mu\in ML_{\mathrm{int}}(X)$ there exists a sequence of simple closed geodesics $\{\gamma_n\}_n$ and positive numbers $\{ r_n\}_n$ such that
$$
\mathrm{ext}(r_n\cdot \gamma_n)\to\mathrm{ext}(\mu )
$$
as $n\to\infty$.
\end{thm}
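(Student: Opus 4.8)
The plan is to combine the $L^1$-density of Jenkins--Strebel differentials (Corollary \ref{cor:jenkins-dense-L1}) with Kerckhoff's lemma above, which computes the extremal length of a weighted simple closed geodesic as the $L^1$-norm of the associated single-cylinder differential. This reduces the statement to an elementary $L^1$-estimate and bypasses any delicate convergence in $ML_{\mathrm{int}}(X)$. First I would invoke Theorem \ref{thm:main} to produce the unique $\varphi_\mu\in A(X)$ whose horizontal foliation realizes $\mu$, so that by definition $\mathrm{ext}(\mu)=\int_X|\varphi_\mu|$. Since $X\in O_G$, Corollary \ref{cor:jenkins-dense-L1} then yields a sequence $\{\varphi_n\}_n$ of Jenkins--Strebel differentials with $\int_X|\varphi_\mu-\varphi_n|\to 0$.

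Next I would extract the data carried by each $\varphi_n$. By the definition of a Jenkins--Strebel differential, the non-singular horizontal trajectories of $\varphi_n$ fill a single open cylinder whose core is homotopic to a simple closed geodesic $\gamma_n$; let $r_n>0$ denote the height of this cylinder. The horizontal foliation of $\varphi_n$ straightens to the weighted geodesic $r_n\cdot\gamma_n$, and Kerckhoff's lemma applies verbatim to give $\mathrm{ext}(r_n\cdot\gamma_n)=\int_X|\varphi_n|$. This is exactly the pairing that connects the topological side (the weighted geodesics $r_n\cdot\gamma_n$) with the analytic side (the $L^1$-norms of the $\varphi_n$).

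Finally, the conclusion follows from the reverse triangle inequality. Since $\big|\int_X|\varphi_n|-\int_X|\varphi_\mu|\big|\le\int_X\big||\varphi_n|-|\varphi_\mu|\big|\le\int_X|\varphi_n-\varphi_\mu|\to 0$, we obtain $\mathrm{ext}(r_n\cdot\gamma_n)=\int_X|\varphi_n|\to\int_X|\varphi_\mu|=\mathrm{ext}(\mu)$. Equivalently, one may phrase the last step through continuity: the $L^1$-convergence $\varphi_n\to\varphi_\mu$ means precisely that $r_n\cdot\gamma_n\to\mu$ in $ML_{\mathrm{int}}(X)$, so Theorem \ref{thm:ext-l-cont} delivers $\mathrm{ext}(r_n\cdot\gamma_n)\to\mathrm{ext}(\mu)$ at once. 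There is no substantive obstacle here; the only point requiring care is the correct identification of the cylinder height $r_n$ and the matching of $\mathrm{ext}(r_n\cdot\gamma_n)$ with $\int_X|\varphi_n|$, both of which are furnished directly by Kerckhoff's lemma.
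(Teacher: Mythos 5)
Your proposal is correct and follows essentially the same route as the paper, which deduces the statement directly from Corollary \ref{cor:jenkins-dense-L1} and Theorem \ref{thm:ext-l-cont}; you merely make explicit the identification $\mathrm{ext}(r_n\cdot\gamma_n)=\int_X|\varphi_n|$ via Kerckhoff's lemma and the elementary $L^1$-estimate. No gaps.
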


We are ready to prove a formula for the Teichm\"uller distance on the Teichm\"uller space $T(X)$ when $X\in O_G$  which is analogous to the Kerckhoff's formula (see \cite[Theorem 4]{Kerckhoff}) in the case of a closed surface. 

Recall that the Teichm\"uller space $T(X)$ of a Riemann surface $X=\mathbb{H}/\Gamma$ is the set of equivalence classes $[f]$ of quasiconformal maps $f:X\to Y$, where $Y$ is a variable Riemann surface. Two quasiconformal maps $f:X\to Y$ and $f_1:X\to Y_1$ are equivalent if there exist lifts $\tilde{f},\tilde{f}_1:\mathbb{H}\to\mathbb{H}$ of $f,f_1$ and a M\"obius map $A:\mathbb{H}\to\mathbb{H}$ such that $A\circ\tilde{f}=\tilde{f}_1$ on the boundary $\bar{\mathbb{R}}=\mathbb{R}\cup\{\infty\}$ of $\mathbb{H}$ (for example, see \cite[page 117]{GardinerLakic}). The Teichm\"uller distance between two points $[f],[g]\in T(X)$ is given by
$$
d_T([f],[g])=\inf_{q}\frac{1}{2}\log K(q),
$$
where the infimum is over all quasiconformal maps $q$ (boundedly) homotopic to $g\circ f^{-1}$ and $K(q)$ is the quasiconformal constant of $q$ (see \cite[page 25]{GardinerLakic}).

\begin{thm}
\label{thm:kerckhoff}
Let $X\in O_G$ and $S$ be the set of simple closed geodesics on $X$. The Teichm\"uller distance between two points $[f:X\to Y]$ and $[g:X\to Z]$ in $T(X)$ is equal to
$$
d_T([f],[g])=\frac{1}{2}\log \sup_{\gamma\in S}\frac{\mathrm{ext}_Z(g(\gamma ))}{\mathrm{ext}_Y(f(\gamma ))}
$$
where $\mathrm{ext}_Y(\cdot )$ and $\mathrm{ext}_Z(\cdot )$ are extremal lengths on surfaces $Y$ and $Z$ of the corresponding simple closed curves.
\end{thm}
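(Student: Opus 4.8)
The plan is to prove the two inequalities separately, using the approximation results established above as the main engine. Write $K = \sup_{\gamma \in S} \mathrm{ext}_Z(g(\gamma))/\mathrm{ext}_Y(f(\gamma))$ for the right-hand side. For the lower bound $d_T([f],[g]) \geq \frac{1}{2}\log K$, I would exploit the fact that extremal length can only be distorted by a factor bounded by the quasiconformal constant. Concretely, if $q$ is any quasiconformal map (boundedly) homotopic to $g \circ f^{-1}$, then for each simple closed geodesic $\gamma$ the quasi-invariance of extremal length gives $\mathrm{ext}_Z(g(\gamma))/\mathrm{ext}_Y(f(\gamma)) \leq K(q)$, since $q$ maps an annulus homotopic to $f(\gamma)$ on $Y$ to an annulus homotopic to $g(\gamma)$ on $Z$ while distorting the modulus by at most $K(q)$. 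Taking the supremum over $\gamma \in S$ and then the infimum over all such $q$ yields $K \leq e^{2 d_T([f],[g])}$, which is the desired lower bound.

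The upper bound $d_T([f],[g]) \leq \frac{1}{2}\log K$ is where the parabolicity hypothesis and the approximation machinery enter. The idea is to extend Kerckhoff's extremal-length characterization of the Teichm\"uller metric, which for closed surfaces reads $d_T = \frac{1}{2}\log \sup_{\mu}\mathrm{ext}_Z(g(\mu))/\mathrm{ext}_Y(f(\mu))$ with the supremum over \emph{all} measured laminations, and to show that for $X \in O_G$ the supremum over simple closed geodesics already achieves the supremum over all of $ML_{\mathrm{int}}$. First I would invoke the Strebel-point density result (cited as \cite[page 106, Theorem 12]{GardinerLakic}): given $[f]$ and $[g]$, one may approximate the extremal problem by a sequence of Strebel points whose Teichm\"uller maps have horizontal foliations that are integrable holomorphic quadratic differentials on $Y$. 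For a Strebel point, the Teichm\"uller distance is realized, and the extremal dilatation is computed in terms of the extremal length of the associated measured lamination $\mu \in ML_{\mathrm{int}}(X)$, giving $e^{2 d_T} = \mathrm{ext}_Z(g(\mu))/\mathrm{ext}_Y(f(\mu))$ (up to the approximation error).

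At this stage the crux is to replace the extremal length of a general $\mu \in ML_{\mathrm{int}}(X)$ by extremal lengths of simple closed geodesics. This is exactly where Corollary \ref{cor:jenkins-dense-L1} and the continuity of extremal length (Theorem \ref{thm:ext-l-cont}) are used, and here the hypothesis $X \in O_G$ is essential. By Corollary \ref{cor:jenkins-dense-L1}, the quadratic differential $\varphi_\mu \in A(X)$ realizing $\mu$ is approximated in $L^1$ by Jenkins-Strebel differentials $\varphi_n$, each of which corresponds (via its single cylinder) to a weighted simple closed geodesic $r_n \cdot \gamma_n$. Applying this simultaneously on $Y$ and on $Z$ — using that the straightening map and the identification of $ML_{\mathrm{int}}$ are compatible with the quasiconformal maps $f$ and $g$ via Theorem \ref{thm:mlf-qc-inv} — and using the continuity of $\mathrm{ext}$ on $ML_{\mathrm{int}}$, I would pass from the ratio $\mathrm{ext}_Z(g(\mu))/\mathrm{ext}_Y(f(\mu))$ to the limit of the ratios $\mathrm{ext}_Z(g(r_n \cdot \gamma_n))/\mathrm{ext}_Y(f(r_n \cdot \gamma_n))$. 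Since the scaling factor $r_n^2$ cancels in the ratio, each such ratio equals $\mathrm{ext}_Z(g(\gamma_n))/\mathrm{ext}_Y(f(\gamma_n)) \leq K$. Combining this with the Strebel-point approximation gives $e^{2 d_T([f],[g])} \leq K$, completing the upper bound.

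The main obstacle I anticipate is the simultaneous control of the approximation on both target surfaces $Y$ and $Z$: the Jenkins-Strebel approximation of Corollary \ref{cor:jenkins-dense-L1} is naturally stated for a single surface, and one must verify that the \emph{same} sequence of simple closed geodesics $\gamma_n$ can be used to approximate the extremal length ratio on both sides, which requires tracking how the straightening map interacts with $f$ and $g$ and confirming that $f(\gamma_n), g(\gamma_n)$ stay aligned with the approximating laminations $f(\mu), g(\mu)$. A secondary subtlety is ensuring that the Strebel-point approximation produces foliations lying in $ML_{\mathrm{int}}(X)$ (so that the preceding theory applies) and that the errors from the two approximations — the Strebel-point limit and the Jenkins-Strebel limit — can be controlled together; this is handled by a diagonal argument combined with the continuity statement of Theorem \ref{thm:ext-l-cont}.
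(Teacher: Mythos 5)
Your proposal is correct and follows essentially the same route as the paper: the lower bound via quasi-invariance of moduli of annuli under quasiconformal maps, and the upper bound by first treating Teichm\"uller-type (Strebel) extremal maps, where the ratio of extremal lengths of the horizontal lamination equals $K(h)$, then approximating that lamination by weighted simple closed geodesics via Corollary \ref{cor:jenkins-dense-L1} and Theorem \ref{thm:ext-l-cont} (with the weights cancelling in the ratio), and finally handling general extremal maps by Strebel-point density and a diagonal argument. The simultaneous-control issue you flag is resolved in the paper exactly as you anticipate, by comparing $\mathrm{ext}_{Z_n}(g_n(\gamma))$ with $\mathrm{ext}_Z(g(\gamma))$ up to the factor $e^{\pm 2d_T([h_n],[h])}$.
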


\begin{proof}
Recall that each homotopy class of quasiconformal maps between two Riemann surfaces contains at least one extremal quasiconformal map, i.e., a map with the smallest quasiconformal constant (for example, see \cite[page 26]{GardinerLakic}). Let $h\in [g\circ f^{-1}]$ be an extremal map. Then $d_T([f],[g])=\frac{1}{2}\log K(h)$, where $K(h)$ is the quasiconformal constant of $h$. Recall that if $C$ is any cylinder, then
$$
\frac{1}{K(h)}\mathrm{mod}C\leq\mathrm{mod}h(C)\leq K(h)\mathrm{mod}C.
$$  

Since $\mathrm{ext}(\gamma )=\frac{1}{\mathrm{mod}C_{\gamma}}$ where $C_{\gamma}$ is a cylinder homotopic to $\gamma$ with maximal modulus and since $h(f(\gamma ))$ is homotopic to $g(\gamma )$, it follows that
$$
\mathrm{ext}_Z(g(\gamma ))\leq \frac{1}{\mathrm{mod}h(C_{f(\gamma )})}\leq K(h)\mathrm{ext}_Y(f(\gamma )) 
$$
which gives
\begin{equation}
\label{eq:dT-ineq}
d_T([f],[g])\geq\frac{1}{2}\log \sup_{\gamma\in S}\frac{\mathrm{ext}_Z(g(\gamma ))}{\mathrm{ext}_Y(f(\gamma ))}.
\end{equation}

We need to prove the opposite inequality. Assume that $h$ is given by an affine stretching in the horizontal direction in the natural parameter of an integrable holomorphic quadratic differential $\varphi\in A(Y)$. Such extremal maps are unique in their equivalence classes $[h]$, and they are said to be of {\it Teichm\"uller type}. Let $\mathcal{F}_{\varphi}$ be the horizontal foliation of $\varphi$ and let $\mu_{\varphi}\in ML_{\mathrm{int}}(Y)$ be the corresponding measured lamination. To have the same vertical measure of the foliation $h(\mathcal{F}_{\varphi})$ as the vertical measure of $\mathcal{F}_{\varphi}$, we normalize $h$ to be the horizontal stretching by the factor $K(h)$ while keeping the vertical direction fixed. If $\varphi_h\in A(Z)$ is the terminal differential of the Teichm\"uller map $h:Y\to Z$, then $\int_Z|\varphi_h| =K(h)\int |\varphi |$. By the definition of extremal lengths, we get
$$
\frac{\mathrm{ext}(h(\mu_{\varphi}))}{\mathrm{ext}(\mu_{\varphi})}=K(h).
$$
By Corollary \ref{cor:jenkins-dense-L1} , Theorem \ref{thm:ext-l-cont} and $\mathrm{ext}(r\cdot\gamma )=r^2\mathrm{ext}(\gamma )$, we conclude that there exists a sequence $\{\gamma_n\}$ of simple closed geodesics such that
$
d_T([f],[g])=\lim_{n\to\infty} \frac{1}{2}\log \frac{\mathrm{ext}_Z(g(\gamma_n ))}{\mathrm{ext}_Y(f(\gamma_n ))}$ and the equality holds for $h$ of Teichm\"uler type.

If an extremal map $h\in [g\circ f^{-1}]$ is not of Teichm\"uller type, then it can be approximated by a sequence of maps $h_n:Y\to Z_n$ of the Teichm\"uller type (see \cite[page 106, Theorem 12]{GardinerLakic}). Thus each $h_n$ is associated with an integrable holomorphic quadratic differential $\varphi_n\in A(Y)$ such that $h_n$ is obtained by horizontal stretching by a factor $K(h_n)$ with $K(h_n)\to K(h)$ as $n\to\infty$. 
Let $d_n=d_T([h_n],[h])$.
By the above considerations, for each $n$ there exists a sequence $\{\gamma_i^n\}_i$ of simple closed geodesics such that 
$$
d_T([f],[g_n])=\lim_{i\to\infty}  \frac{1}{2}\log \frac{\mathrm{ext}_{Z_n}(g_n(\gamma_i^n ))}{\mathrm{ext}_Y(f(\gamma_i^n ))}.
$$
Thus there exists a choice $i_n$ for each $n$ such that 
\begin{equation}
\label{eq:ext-distance-approx}
|d_T([f],[g_n])- \frac{1}{2}\log \frac{\mathrm{ext}_{Z_n}(g_n(\gamma_{i_n}^n ))}{\mathrm{ext}_Y(f(\gamma_{i_n}^n ))}|<\frac{1}{n}.
\end{equation} 

For every simple closed geodesic $\gamma$ in $X$ we have 
 $$e^{-2d_n} \mathrm{ext}_{Z}(g(\gamma ))\leq \mathrm{ext}_{Z_n}(g_n(\gamma ))\leq e^{2d_n} \mathrm{ext}_{Z}(g(\gamma )).$$
 and together with (\ref{eq:ext-distance-approx}) we conclude that
 \begin{equation}
\label{eq:ext-distance}
|d_T([f],[g_n])- \frac{1}{2}\log \frac{\mathrm{ext}_{Z}(g(\gamma_{i_n}^n ))}{\mathrm{ext}_Y(f(\gamma_{i_n}^n ))}|<\frac{1}{n}+d_n.
\end{equation} 
We obtained the theorem by letting $n\to\infty$ and noting that (\ref{eq:dT-ineq}) holds.
\end{proof}

\section{Surfaces with a bounded pants decomposition}

In this section $X=\mathbb{H}/\Gamma$ is an infinite Riemann surface with the fundamental group $\Gamma$ of the first kind that   has a geodesic pants decomposition $\{ P_k\}_k$ with boundary geodesics (cuffs) $\{\alpha_n\}_n$ that satisfy
$$
\frac{1}{C}\leq \ell_X(\alpha_n)\leq C
$$
for a fixed $C>0$ and all $n$, where $\ell_X(\alpha_n)$ is the hyperbolic length of the closed geodesic $\alpha_n$. The Riemann surface $X$ is said to have {\it bounded} pants decomposition. Each $\alpha_n$ is either on the common boundary of two geodesic pairs of pants $P_1$ and $P_2$ or a single pair of pants $P$ glued to itself along $\alpha_n$. In the former case, let $\beta_n$ be the shortest closed geodesic in $P_1\cup\alpha_n\cup P_2$ that intersect $\alpha_n$ in two points. In the latter case, let $\beta_n$ be the shortest closed geodesic in $P\cup\alpha_n$ that intersects $\alpha_n$ in one point.

Let $\varphi$ be an integrable holomorphic quadratic differential on $X$ and let $\mu_{\varphi}\in {ML}_{\mathrm{int}}(X)$ be the measured lamination corresponding to the vertical foliation of $\varphi$. We prove

\begin{prop}
\label{prop:int-numbers}
Let $X$ be an infinite Riemann surface with a bounded pants decomposition $\{\alpha_n\}_n$ and transverse geodesic family $\{\beta_n\}_n$ defined above. For any $\varphi\in A(X)$, we have
\begin{equation}
\label{eq:sum-intersections}
\sum_n [i(\alpha_n,\mu_{\varphi})^2+i(\beta_n,\mu_{\varphi})^2]<\infty .
\end{equation}
\end{prop}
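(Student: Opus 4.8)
The plan is to reduce the whole estimate to one local length--area inequality and then to sum it over the pants decomposition using bounded geometry. The basic tool I would isolate first is: if $\gamma$ is an essential simple closed curve on $X$ and $A\subset X$ is an embedded annulus homotopic to $\gamma$ with $m=\mathrm{mod}(A)$, then
$$
i(\gamma,\mu_{\varphi})^2\leq \frac{1}{m}\int_A|\varphi|.
$$
To prove this I would put $A$ in the flat coordinate $\zeta=s+it$ with $s\in[0,1]$ the circumference and $t\in[0,m]$ the height, and write $\psi=\varphi\,(dz/d\zeta)^2$ for $\varphi$ in this coordinate. Each circle $\{t=\mathrm{const}\}$ is homotopic to $\gamma$, and since the transverse measure of the vertical foliation is $|du|=|\mathrm{Re}(\sqrt{\varphi}\,dz)|$, one gets $i(\gamma,\mu_{\varphi})\le\int_0^1|\mathrm{Re}(\sqrt{\psi})|\,ds\le\int_0^1|\sqrt{\psi}|\,ds$. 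Integrating in $t$ gives $m\,i(\gamma,\mu_{\varphi})\le\int_A|\sqrt{\psi}|$, and Cauchy--Schwarz together with $\int_A|\psi|\,ds\,dt=\int_A|\varphi|$ and $\int_A 1\,ds\,dt=m$ yields the inequality.

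Next I would handle the cuffs. Since $\ell_X(\alpha_n)\le C$, the collar lemma provides an embedded collar $A_n$ about $\alpha_n$ whose modulus is bounded below by some $m_0=m_0(C)>0$, and the collars about the pairwise disjoint geodesics $\alpha_n$ are pairwise disjoint. Applying the local inequality and disjointness,
$$
\sum_n i(\alpha_n,\mu_{\varphi})^2\leq\frac{1}{m_0}\sum_n\int_{A_n}|\varphi|\leq\frac{1}{m_0}\int_X|\varphi|<\infty.
$$

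For the transversals $\beta_n$, let $W_n$ be the union of the one or two pairs of pants adjacent to $\alpha_n$, so that $\beta_n\subset W_n$, and I would produce an embedded annulus $B_n\subset W_n$ homotopic to $\beta_n$ of definite modulus together with a bounded-overlap count. For the modulus: although the gluing twist across $\alpha_n$ is unconstrained, the homotopy class realizing the shortest geodesic crossing $\alpha_n$ only sees the twist modulo a Dehn twist about $\alpha_n$, so the effective twist may be taken in $[0,\ell_X(\alpha_n)]\subset[0,C]$; together with $\tfrac{1}{C}\le\ell_X(\alpha_n)\le C$ this places the marked configuration $(W_n,\beta_n)$ in a compact family of bordered hyperbolic surfaces. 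Hence $\ell_X(\beta_n)\le L(C)$ and the maximal embedded annulus $B_n\subset W_n$ homotopic to $\beta_n$ has $\mathrm{mod}(B_n)\ge m_1=m_1(C)>0$. For the overlap: each pair of pants has exactly three boundary cuffs, so a fixed pants lies in $W_n$ for at most three indices $n$, whence $\sum_n\int_{W_n}|\varphi|\le 3\int_X|\varphi|$. Combining this with the local inequality applied to $B_n\subset W_n$,
$$
\sum_n i(\beta_n,\mu_{\varphi})^2\leq\frac{1}{m_1}\sum_n\int_{B_n}|\varphi|\leq\frac{1}{m_1}\sum_n\int_{W_n}|\varphi|\leq\frac{3}{m_1}\int_X|\varphi|<\infty,
$$
and adding the two estimates proves (\ref{eq:sum-intersections}).

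The main obstacle I expect is the uniform lower bound $m_1>0$ for the modulus of $B_n$: a priori nothing controls the twist at $\alpha_n$, and one must observe that the shortest transversal $\beta_n$ depends on this twist only modulo $\ell_X(\alpha_n)$, so that after reduction the relevant configurations vary in a compact family on which the extremal length of $\beta_n$ is continuous and finite. The remaining ingredients---the collar lemma, and the elementary ``each pants meets three cuffs'' bookkeeping that furnishes the bounded overlap---are routine.
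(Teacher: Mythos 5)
Your proof is correct and follows essentially the same length--area strategy as the paper: the local inequality $i(\gamma,\mu_{\varphi})^2\leq \mathrm{mod}(A)^{-1}\int_A|\varphi|$ is exactly the paper's Cauchy--Schwarz estimate over the crossing leaves of the collar (the paper bounds the crossing-family modulus $\mathrm{mod}\,\mathcal{B}_n$ by the reciprocal of the annulus modulus), the cuff terms are summed over disjoint standard collars, and the transversal terms are summed with a bounded-overlap count (the paper uses collars around the $\beta_n$ meeting at most four others, you use annuli inside the pants-pairs $W_n$ with overlap three --- an immaterial difference). Your explicit reduction of the twist at $\alpha_n$ modulo a full Dehn twist to place the configurations $(W_n,\beta_n)$ in a compact family is a welcome elaboration of a point the paper only asserts (namely that $\ell_X(\beta_n)$, and hence the relevant moduli, are uniformly controlled).
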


\begin{proof}
Let $\mathcal{C}_n$ be the standard collar around $\alpha_n$. By the Collar Lemma \cite{Buser}, the collars $\mathcal{C}_n$ are mutually disjoint. Let $\mathcal{B}_n$ be the sub-leaves of the horizontal foliation $\mathcal{F}_{\varphi}$ that connect the two boundary components of $\mathcal{C}_n$. Note that by \cite[Lemma 6.1]{Saric-heights}  we have
$$
i(\alpha_n,\mu_{\varphi})\leq i(\alpha_n,\mathcal{B}_n).
$$

Consider a countable collection of vertical arcs that intersect only the leaves of $\mathcal{B}_n$.  Reduce the size of the elements of the collection such that each leaf of $\mathcal{B}_n$ is intersected exactly once. Let $I_n$ be the union of these arcs. 
Let $w:=u+iv=\int_{*}\sqrt{\varphi (z)}dz$ be the natural parameter of $\varphi$. Then we have 
$$i(\alpha_n,\mathcal{B}_n)=\int_{I_n}|dv|$$
by the definition of the intersection number.

The Cauchy-Schwarz inequality and the above give
\begin{equation}
\label{eq:intersection_cuff}
i(\alpha_n,\mu_{\varphi})^2\leq \Big{(}\int_{I_n}dv\Big{)}^2\leq\int_{I_n}l_n(w)dv\int_{I_n}\frac{1}{l_n(w)}dv
\end{equation}
where $l_n(w)$ is the length of the leaf of $\mathcal{B}_n$ through $w$ in the $\varphi$-metric. 

Note that by the Fubini theorem $$\int_{I_n}l_n(w)dv=\int_{\widehat{\mathcal{B}}_n}dudv\leq\int_{\mathcal{C}_n}|\varphi (z)|dxdy,$$ where $\widehat{\mathcal{B}}_n$ stands for the region in $\mathcal{C}_n$ which is the union of the leaves of $\mathcal{B}_n$. 
Moreover, in the natural parameter the leaves of $\mathcal{B}_n$ are horizontal arcs and we obtain (see \cite{Saric-heights} or \cite{BHS})
$$
\int_{I_n}\frac{1}{l_n(w)}dv=\mathrm{mod}\mathcal{B}_n\leq\mathrm{mod} \mathcal{C}_n\leq C'
$$
where $\mathrm{mod}\mathcal{B}_n$ is the modulus of the family of curves $\mathcal{B}_n$ and $\mathrm{mod} \mathcal{C}_n$ is the modulus of all curves in $\mathcal{C}_n$ that connect the two boundary components of $\mathcal{C}_n$. The inequality $\mathrm{mod} \mathcal{C}_n\leq C'$ holds for all $n$ by the lower bound on the lengths $\ell_X(\alpha_n)$ (see \cite{Maskit}, \cite{BHS} or \cite{Saric-heights} for details).

Therefore from (\ref{eq:intersection_cuff}) we obtain
$$
i(\alpha_n,\mu_{\varphi})^2\leq C'\int_{\mathcal{C}_n}|\varphi (z)|dxdy
$$
Since $\mathcal{C}_n$ are disjoint, summing over all $n$ gives
\begin{equation}
\label{eq:int-sum-cuffs}
\sum_n i(\alpha_n,\mu_{\varphi})^2\leq C'\|\varphi\|_{L^1(X)}.
\end{equation}

Consider the family of simple closed geodesics $\{\beta_n\}_n$ associated to $\{\alpha_n\}_n$ as above. Note that $\ell_X(\beta_n)$ is also bounded between two positive constants because $\ell_X(\alpha_n)$ is bounded, and $\beta_n$ is chosen to be of minimal length among all closed geodesics intersecting $\alpha_n$ in a minimal number of points inside $P_1\cup\alpha_n\cup P_2$ or $P\cup\alpha$. In addition, each $\beta_i$ can intersect at most four other geodesics from the family $\{\beta_n\}_n$. Since the standard collars around disjoint closed geodesics are disjoint,  each standard collar around $\beta_n$ can intersect at most four standard collars around other geodesics in $\{\beta_n\}_n$. The above argument gives
$$
\sum_n i(\beta_n,\mu_{\varphi})^2\leq 4C'\|\varphi\|_{L^1(X)}
$$
and the proposition is proved.
\end{proof}

We apply Theorem \ref{thm:par-ch-cross-cuts} together with the necessary condition in Proposition \ref{prop:int-numbers} to detect infinite Riemann surfaces with bounded pants decomposition that are parabolic type. 

We first introduce a quantity that will measure the {\it topological complexity with respect to the pants decomposition} of the ends of $X$. Let $X_1$ be the closure of a fixed pair of pants from the pants decomposition. Then $X_1$ is either a pair of pants with at least one boundary geodesic (and at most two cusps) or a torus with a single geodesic boundary component. Since $X$ is an infinite surface, the complement of $X_1$ in $X$ has at least one component with a non-simple topological end (an end accumulated by other ends or genus). Each component of $X\setminus X_1$ has at least one boundary geodesic in common with $X_1$. We define $q(1)$ as the number of the boundary geodesics of $X_1$ that are also on the boundary of a component of $X\setminus X_1$ with a non-simple end. Note that a single component of $X\setminus X_1$ with a non-simple end can have one, two, or three boundary geodesics of $X_1$ on its boundary. In any case, we have $1\leq q(1)\leq 3$.

Assume now that we defined the subsurface $X_n$ and $q(n)$. We define $X_{n+1}$ as the union of $X_n$ and all pairs of pants sharing the common boundary geodesics with $X_n$. Define $q(n+1)$ to be the number of geodesics on the boundary of $X_{n+1}$ that are also boundary geodesics of the components of $X\setminus X_{n+1}$ with non-simple ends. The sequence of numbers $\{ q(n)\}_n$ is called the {\it topological complexity of the ends of} $X$ (with respect to the fixed pants decomposition). 

For example, when the Riemann surface $X$ is the complement of a Cantor set in the Riemann sphere, then a standard pants decomposition as in \S 8 has the topological complexity $q(n)=2^n$. When the cuff lengths of the pants decomposition of the complement $X$ of a Cantor set are fixed, Theorem \ref{thm:bdd-parbolic} below does not apply as $q(n)$ is too large. Indeed, Theorem \ref{thm:bdd-parbolic} is primarily interesting when the topological complexity is not too big.

\begin{thm}
\label{thm:bdd-parbolic}
Let $X$ be an infinite Riemann surface with a bounded pants decomposition. Let $\{ X_n\}_n$ be an exhaustion of $X$ obtained from the pants decomposition and $\{ q(n)\}_n$ the topological complexity of the ends of $X$. 

If 
$$
\sum_{n=1}^{\infty}\frac{1}{q(n)}=\infty .
$$
then the Riemann surface $X$ is of parabolic type.
\end{thm}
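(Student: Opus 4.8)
The plan is to argue by contradiction, using the characterization of non-parabolicity via cross-cuts (Theorem \ref{thm:int-non-par-cross-cut}) together with the summability of intersection numbers in Proposition \ref{prop:int-numbers}. Suppose $X\notin O_G$. Then Theorem \ref{thm:int-non-par-cross-cut} provides a non-zero $\varphi\in A(X)$ whose horizontal trajectories are cross-cuts covering $X$ up to a set of zero area; let $\mu=\mu_{\varphi}\in ML_{\mathrm{int}}(X)$ be the straightening of its horizontal foliation. Fixing the first pair of pants $X_1$ of the exhaustion, the horizontal leaves meeting $X_1$ carry a definite positive transverse measure $T_1>0$, since they fill $X_1$ up to area zero. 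The goal is to show that $T_1$ forces $\sum_m i(\alpha_m,\mu)^2$ to diverge, contradicting Proposition \ref{prop:int-numbers} (applied to $-\varphi$, whose vertical foliation is the horizontal foliation of $\varphi$, or equivalently by running the proof of that proposition for the horizontal foliation directly).

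The first step is a localization lemma: a leaf of $\varphi$ that escapes to a cusp does so only on a set of zero transverse measure. Indeed, near a cusp an integrable $\varphi$ has at most a simple pole, so in the natural parameter $w=\int\sqrt{\varphi}\,dz$ the puncture corresponds to $w=0$, while a non-critical horizontal leaf stays at a fixed positive height $|\mathrm{Im}\,w|$; hence only the critical leaves, a zero-measure set, enter the cusp, and summing over the countably many cusps the escaping-into-cusps leaves have total transverse measure zero. The role of the \emph{bounded} pants decomposition here is that a cylinder end cannot be cut into pairs of pants, so every isolated planar simple end is in fact a cusp; consequently a component of $X\setminus X_n$ all of whose ends are simple contributes no escaping transverse measure. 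Thus, after discarding a zero-measure set, each leaf meeting $X_1$ has at least one ray escaping into a component of $X\setminus X_n$ with a \emph{non-simple} end, for every $n$.

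The second step is the key inequality. Let $\alpha_{n,1},\dots,\alpha_{n,q(n)}$ be the boundary cuffs of $X_n$ that bound a component of $X\setminus X_n$ with a non-simple end. By the previous paragraph, after removing a zero-measure set each leaf through $X_1$ exits $X_n$ essentially across one of these $q(n)$ cuffs; since the escaping flux across $\alpha_{n,j}$ is bounded by $i(\alpha_{n,j},\mu)$ and transverse measure is additive, we obtain $T_1\le\sum_{j=1}^{q(n)}i(\alpha_{n,j},\mu)$ for every $n$ with $X_1\subseteq X_n$. Cauchy--Schwarz then gives $\sum_{j=1}^{q(n)}i(\alpha_{n,j},\mu)^2\ge T_1^2/q(n)$.

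Finally, because $X_{n+1}$ is obtained from $X_n$ by attaching all pairs of pants adjacent to $\partial X_n$, the boundary cuffs of distinct $X_n$ are pairwise distinct members of the family $\{\alpha_m\}_m$; hence summing the last inequality over $n$ yields $\sum_m i(\alpha_m,\mu)^2\ge T_1^2\sum_n 1/q(n)=\infty$, contradicting Proposition \ref{prop:int-numbers}. This contradiction shows that $X$ must be of parabolic type. I expect the main obstacle to be the localization lemma together with the claim that under a bounded pants decomposition the only simple ends are cusps, i.e.\ justifying that the positive-measure cross-cuts are necessarily routed through precisely the $q(n)$ cuffs counted by the topological complexity and cannot leak out through simple ends that $q(n)$ ignores.
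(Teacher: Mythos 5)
Your argument is correct and follows essentially the same route as the paper: produce, via Theorem \ref{thm:int-non-par-cross-cut}, a quadratic differential whose escaping horizontal leaves carry positive transverse measure, distribute that measure over the $q(n)$ cuffs bounding components of $X\setminus X_n$ with non-simple ends, apply Cauchy--Schwarz to get $\sum_j i(\alpha_{n,j},\mu)^2\geq T_1^2/q(n)$, and contradict the summability in Proposition \ref{prop:int-numbers}. The only place you go beyond the paper is the explicit localization lemma ruling out transverse measure leaking through cusps and simple ends; the paper simply asserts that each escaping geodesic must cross one of the $q(n)$ counted cuffs, so your extra care there is a correct filling-in of a step the paper leaves implicit.
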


\begin{proof}
Assume that 
$$
\sum_{n=1}^{\infty}\frac{1}{q(n)}=\infty .
$$
Suppose that $\mu\in{ML}(X)$ and the set of geodesics in the support of $\mu$ whose both ends leave every compact subset of $X$ have positive $\mu$-measure. Assume that $X_{n_0}$ is the first subsurface such that the $
\mu$-measure of the geodesics of the support of $\mu$ intersecting $X_{n_0}$ is positive. Denote this set of geodesics by $\Lambda_{n_0}$ and  set $M=\mu (\Lambda_{n_0})>0$.  

Each geodesic of $\Lambda_{n_0}$ intersects at least one boundary geodesic of $X_n$ that bounds components of $X\setminus X_n$ with non-simple ends for all $n\geq n_0$. Therefore the $\mu$-measure $M$ distributes to $q(n)$ boundary geodesics of $X_n$. If $\{\alpha_1,\ldots ,\alpha_{q(n)}\}$ are the boundary geodesics of $X_n$ corresponding to non-simple ends of $X\setminus X_n$, then we have
$$
\sum_{i=1}^{q(n)}i(\alpha_i,\mu |_{\Lambda_{n_0}})\geq M.
$$
By the Cauchy-Schwarz inequality, we  have that
$$
\sum_{i=1}^{q(n)}i(\alpha_i,\mu |_{\Lambda_{n_0}})^2\geq \frac{M^2}{q(n)}.
$$
Summing over all $n$ in the above inequality, we obtain $\sum_n i(\alpha_n,\mu )^2=\infty$. This implies that $\mu\notin {ML}_{\mathrm{int}}(X)$. Therefore, by Proposition \ref{prop:int-numbers}, $X$ does not support an integrable holomorphic quadratic differential with a set of horizontal leaves of the positive area that leave each compact subset of $X$. Thus $X$ is of a  parabolic type by Theorem \ref{thm:int-non-par-cross-cut}. 
\end{proof}

We compare the above theorem to the known results for the surfaces with bounded pants decomposition. Mori \cite{Mori} (see also Rees \cite{Rees}) proved that $\mathbb{Z}^2$-covers of a compact surface are of parabolic type while the $\mathbb{Z}^r$-covers are not parabolic when $r\geq 3$. When the $\mathbb{Z}^2$-cover $X$ is obtained by lifting two disjoint simple closed curves,
then we have $q(n)=4n$ and by Theorem \ref{thm:bdd-parbolic}, $X$ is parabolic. Theorem \ref{thm:bdd-parbolic} implies that $X$ is parabolic for $q(n)\leq const\cdot n(\ln n)^p$ for $p\leq 1$ which can be thought of as an interpolation between $\mathbb{Z}^2$- and $\mathbb{Z}^3$-covers.

\section{Surfaces with sequences of closed geodesics whose lengths converge to zero}

In this section, $X$ is an infinite Riemann surface equipped with a geodesic pants decomposition whose geodesic boundaries $\{\alpha_n\}_n$ satisfy
$$
\ell_X(\alpha_n)\leq C
$$
for some $C>0$ and all $n$, and there is a subsequence $\{\alpha_{n_k}\}_k$ with
$$
\lim_{k\to\infty}\ell_X(\alpha_{n_k})=0
$$
and
$$
c\leq\ell_X(\alpha_n)
$$
for some $c>0$ and all $n\notin\{ n_k\}_k$. 

We prove
\begin{thm}
\label{thm:upper-bdd}
Let $X$ be an infinite Riemann surface equipped with a geodesic pants decomposition $\{\alpha_n\}_n$ that satisfies the above conditions. For $\varphi\in A(X)$, we have
\begin{equation}
\label{eq:int-upper-bdd}
\sum_{n=1}^{\infty} \frac{[i(\alpha_n,\mu_{\varphi})]^2}{\ell_X(\alpha_n)}<\infty .
\end{equation}
\end{thm}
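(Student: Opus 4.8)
The plan is to run the same length--area argument as in the proof of Proposition~\ref{prop:int-numbers} essentially verbatim, but to keep track of how the collar modulus depends on the length $\ell_X(\alpha_n)$ instead of merely bounding it by a uniform constant. Let $\mathcal{C}_n$ be the standard collar around $\alpha_n$, let $\mathcal{B}_n$ be the sub-leaves of the horizontal foliation $\mathcal{F}_\varphi$ that cross $\mathcal{C}_n$, and let $w=u+iv$ be the natural parameter of $\varphi$. Exactly as there, the inequality $i(\alpha_n,\mu_\varphi)\le i(\alpha_n,\mathcal{B}_n)=\int_{I_n}|dv|$, the Cauchy--Schwarz inequality, the Fubini bound $\int_{I_n}l_n(w)\,dv\le\int_{\mathcal{C}_n}|\varphi|\,dxdy$, and the identification $\int_{I_n}\frac{1}{l_n(w)}\,dv=\mathrm{mod}\,\mathcal{B}_n\le\mathrm{mod}\,\mathcal{C}_n$ combine to give the single-collar estimate
\[
i(\alpha_n,\mu_\varphi)^2\le \mathrm{mod}\,\mathcal{C}_n\cdot\int_{\mathcal{C}_n}|\varphi|\,dxdy,
\]
where $\mathrm{mod}\,\mathcal{C}_n$ is the modulus of the family of curves in $\mathcal{C}_n$ joining its two boundary components.

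The heart of the matter is to upgrade the crude bound $\mathrm{mod}\,\mathcal{C}_n\le C'$ used in Proposition~\ref{prop:int-numbers} to the sharp \emph{linear} estimate $\mathrm{mod}\,\mathcal{C}_n\le C''\,\ell_X(\alpha_n)$, valid for all $n$ with a constant $C''$ depending only on the upper bound $C$. I would extract this from the explicit conformal model of the standard collar: in Fermi coordinates the collar of half-width $w(\ell)$, with $\sinh w(\ell)=1/\sinh(\ell/2)$, is conformal to a flat cylinder of circumference $\ell$ and height $2\,\mathrm{gd}(w(\ell))$, where $\mathrm{gd}(w)=\arctan(\sinh w)$ is the Gudermannian function. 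The curves joining the two boundary components run in the height direction, so their modulus equals $\ell/\bigl(2\,\mathrm{gd}(w(\ell))\bigr)$. Since $\mathrm{gd}(w(\ell))=\arctan\bigl(1/\sinh(\ell/2)\bigr)$ is decreasing in $\ell$, for $\ell\le C$ one obtains
\[
\mathrm{mod}\,\mathcal{C}_n=\ell_X(\alpha_n)\cdot\frac{1}{2\,\mathrm{gd}(w(\ell_X(\alpha_n)))}\le \frac{1}{2\,\mathrm{gd}(w(C))}\,\ell_X(\alpha_n),
\]
which is the asserted bound with $C''=1/\bigl(2\arctan(1/\sinh(C/2))\bigr)$; equivalently one may cite Maskit's estimate \cite{Maskit} that the crossing modulus of the collar is comparable to $\ell_X(\alpha_n)$ for bounded lengths. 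This is the step I expect to be the main obstacle, as it is precisely where the geometry of short geodesics enters and where this theorem is genuinely stronger than Proposition~\ref{prop:int-numbers}. I emphasize that only the upper bound $\ell_X(\alpha_n)\le C$ is needed here; the hypotheses concerning the short subsequence $\{\alpha_{n_k}\}_k$ and the lower bound off it play no role in this particular estimate.

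Combining the two displays gives $\frac{i(\alpha_n,\mu_\varphi)^2}{\ell_X(\alpha_n)}\le C''\int_{\mathcal{C}_n}|\varphi|\,dxdy$ for every $n$. Finally, by the Collar Lemma \cite{Buser} the collars $\mathcal{C}_n$ are pairwise disjoint, so summing over $n$ and using the integrability of $\varphi$ yields
\[
\sum_{n=1}^{\infty}\frac{i(\alpha_n,\mu_\varphi)^2}{\ell_X(\alpha_n)}\le C''\sum_{n=1}^{\infty}\int_{\mathcal{C}_n}|\varphi|\,dxdy\le C''\int_X|\varphi|\,dxdy=C''\,\|\varphi\|_{L^1(X)}<\infty,
\]
which is the desired conclusion.
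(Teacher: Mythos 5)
Your proof is correct and follows essentially the same route as the paper: the single-collar estimate $i(\alpha_n,\mu_\varphi)^2\le \mathrm{mod}\,\mathcal{C}_n\cdot\int_{\mathcal{C}_n}|\varphi|$ from the length--area argument of Proposition \ref{prop:int-numbers}, the linear bound $\mathrm{mod}\,\mathcal{C}_n\le M\,\ell_X(\alpha_n)$ (which the paper likewise takes from Maskit), and summation over the pairwise disjoint collars using integrability of $\varphi$. Your explicit derivation of the modulus bound via the Gudermannian function is a correct, self-contained substitute for the citation, and you are right that only the upper bound on the cuff lengths is used.
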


\begin{proof}
Consider the standard collar $\mathcal{C}_n$ around $\alpha_n$. By (\ref{eq:intersection_cuff}), we have that
$$
i(\nu_{\varphi},\alpha_n)^2\leq \Big{(}\int_{\mathcal{C}_n}|\varphi (z)|dxdy\Big{)}\cdot \mathrm{mod}\mathcal{B}_n
$$
where $\mathcal{B}_n$ is the family of horizontal arcs of $\varphi$ connecting the two boundaries of the standard collar $\mathcal{C}_n$. Note that $\mathrm{mod}\mathcal{B}_n\leq\mathrm{mod}\mathcal{C}_n$, where $\mathrm{mod}\mathcal{C}_n$ is the modulus of the curve family connecting the boundaries of the standard collar $\mathcal{C}_n$ (see \cite{Ahlfors}). Since $\mathrm{mod}\mathcal{C}_n\leq M\ell_X(\alpha_n)$ for some constant $M>0$ (see \cite{Maskit}), we obtain
$$
\frac{i(\nu_{\varphi},\alpha_n)^2}{\ell_X(\alpha_n)}\leq M\Big{(}\int_{\mathcal{C}_n}|\varphi (z)|dxdy\Big{)}.
$$
By adding over all $n$ we obtain (\ref{eq:int-upper-bdd}) because $\varphi$ is integrable.
\end{proof}

\section{The complement of the Cantor set surface} Let $X$ be a Riemann surface that is conformal to the complement of the Cantor set in the Riemann sphere. We realize the hyperbolic metric on $X$ by gluing geodesic pants along their boundaries by isometries while the twists are arbitrary. At the level $n=1$, we glue two geodesic pair of pants along one boundary component to obtain a surface $X_1$ of genus zero with four disks removed. At the level $n=2$, we glue one geodesic pair of pants to each boundary geodesic of $X_1$ and obtain a surface $X_2$ with zero genus and eight boundary geodesics. We continue this process for all $n$ such that the surface $X_n$ has zero genus and $2^{n+1}$ boundary components. The family $\{ X_n\}_{n=1}^{\infty}$ is exhaustion of $X$ by compact geodesics subsurfaces, and we fix the geodesic pants decomposition of $X$ obtained by the above process. At the level $n$, the geodesic boundaries of $X_n$ are enumerated by $\{ \alpha^j_n\}_{j=1}^{2^{n+1}}$. 

We have the following theorem (see \cite[Theorem 10.3]{BHS}).

\begin{thm}[Basmajian-Hakobyan-\v S]
\label{thm:BHS}
Under the above notation, let $\ell_n^j$ be the length of the geodesic $\alpha^j_n$ on the Riemann surface $X$.
If, for all $1\leq j\leq 2^{n+1}$ and $n\geq 1$,
$$
\ell_n^j\leq\frac{n}{2^{n+1}}
$$
then $X$ is parabolic.
\end{thm}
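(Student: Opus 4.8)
The plan is to prove $X\in O_G$ by contradiction, playing the non-parabolicity criterion of Theorem \ref{thm:non-parabolic-criteria} against the summability estimate of Theorem \ref{thm:upper-bdd}. First I would check that the Cantor tree surface with the given pants decomposition meets the hypotheses of Section 7: since $\ell_n^j\leq n/2^{n+1}$, the cuff lengths are uniformly bounded above and all tend to $0$, so one may take the required ``short'' subsequence to be the whole family and the auxiliary lower bound on the remaining lengths becomes vacuous; hence Theorem \ref{thm:upper-bdd} applies verbatim. Consequently, for every $\varphi\in A(X)$ the associated lamination $\mu_\varphi\in ML_{\mathrm{int}}(X)$ satisfies $\sum_{n}\sum_{j=1}^{2^{n+1}} i(\alpha_n^j,\mu_\varphi)^2/\ell_n^j<\infty$, where the single-index sum of Theorem \ref{thm:upper-bdd} has been rewritten as a sum over levels $n$ and cuffs $j$.

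Suppose, for contradiction, that $X\notin O_G$. By Theorem \ref{thm:non-parabolic-criteria} there is a measured lamination $\mu\in ML_{\mathrm{int}}(X)$ whose set of leaves leaving every compact subset of $X$ has positive measure, and by Theorem \ref{thm:main} we may take $\mu=\mu_\varphi$ for some $\varphi\in A(X)$. Imitating the bookkeeping in the proof of Theorem \ref{thm:bdd-parbolic}, I would let $X_{n_0}$ be the first exhausting subsurface met by a positive-measure set $\Lambda_{n_0}$ of such escaping leaves and set $M=\mu(\Lambda_{n_0})>0$.

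The core of the argument is the distribution of the mass $M$ over the $2^{n+1}$ level-$n$ cuffs. Each leaf of $\Lambda_{n_0}$ passes through $X_{n_0}\subset X_n$ and eventually leaves the compact set $X_n$, hence crosses $\partial X_n=\bigcup_j \alpha_n^j$; this gives $\sum_{j=1}^{2^{n+1}} i(\alpha_n^j,\mu|_{\Lambda_{n_0}})\geq M$ for all $n\geq n_0$. Applying the Cauchy--Schwarz inequality in the index $j$ yields $\sum_{j=1}^{2^{n+1}} i(\alpha_n^j,\mu)^2\geq M^2/2^{n+1}$, and then the length bound $\ell_n^j\leq n/2^{n+1}$ gives $\sum_{j=1}^{2^{n+1}} i(\alpha_n^j,\mu)^2/\ell_n^j\geq M^2/n$. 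Summing over $n\geq n_0$ produces the divergent series $\sum_n M^2/n=\infty$, contradicting the finiteness from Theorem \ref{thm:upper-bdd}. Hence no such $\mu$ exists and $X\in O_G$; this re-establishes the parabolicity criterion of \cite{BHS} within the present framework.

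The step I expect to be the main obstacle is the inequality $\sum_{j} i(\alpha_n^j,\mu|_{\Lambda_{n_0}})\geq M$: one must justify carefully that restricting the transverse measure to the escaping leaves and then adding the intersection numbers over all level-$n$ cuffs recovers at least the mass $M$ crossing $X_{n_0}$. This rests on two facts, exactly as in Theorem \ref{thm:bdd-parbolic} but specialized to the full topological complexity $q(n)=2^{n+1}$ of the Cantor tree: a leaf meeting the inner subsurface $X_{n_0}$ and leaving every compact set must exit $X_n$ across its boundary, and the transverse measure is additive over the mutually disjoint cuffs $\{\alpha_n^j\}_j$. Once this distribution inequality is in place, the remaining estimates are the routine Cauchy--Schwarz and harmonic-series computations above.
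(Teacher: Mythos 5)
Your proposal is correct and follows essentially the same route as the paper: distribute the escaping mass $M$ over the $2^{n+1}$ level-$n$ cuffs, apply Cauchy--Schwarz to get $\sum_j i(\alpha_n^j,\mu_\varphi)^2\geq M^2/2^{n+1}$, divide by $\ell_n^j\leq n/2^{n+1}$ to obtain the lower bound $M^2/n$, and contradict the summability of Theorem \ref{thm:upper-bdd}. The only cosmetic difference is that you phrase the conclusion via the non-parabolicity criterion of Theorem \ref{thm:non-parabolic-criteria}, whereas the paper argues directly that the cross-cut set of every $\varphi\in A(X)$ has zero measure and then invokes Theorem \ref{thm:par-ch-cross-cuts}; these are logically the same step.
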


\begin{proof}
We give a new proof independent of \cite{BHS}. By Theorem \ref{thm:par-ch-cross-cuts}, $X$ is of parabolic type if and only if for any integrable holomorphic quadratic differential $\varphi$ the set of horizontal trajectories that leave every compact subset is of zero $|\varphi |$-measure. Consider a set $T_{n_0}$ of horizontal trajectories of $\varphi$ that leave every compact subset of $X$ and intersect a subsurface $X_{n_0}$. Let $m_{n_0}>0$ be the $|\varphi |$-measure of this set. Then each trajectory of $T_{n_0}$ intersects at least one boundary geodesic $\{ \alpha^j_n\}_{j=1}^{2^{n+1}}$ for each $n\geq n_0$. Therefore the $|\varphi |$-measure $m_{n_0}$ distributes over $\{ \alpha^j_n\}_{j=1}^{2^{n+1}}$ for each $n\geq n_0$. Equivalently, for each $n\geq n_0$, we have
$$
\sum_{j=1}^{2^{n+1}}i(\alpha^j_n,\mu_{\varphi})\geq m_{n_0}.
$$
This gives 
$$
\sum_{j=1}^{2^n}\frac{i(\alpha^j_n,\mu_{\varphi})^2}{\ell^j_n}\geq 2^{(n+1)}\frac{m_{n_0}^2}{2^{2(n+1)}}\frac{1}{\frac{n}{2^{n+1}}}=\frac{m_{n_0}^2}{n}.
$$
Consequently
$$
\sum_{n\geq n_0}\sum_{j=1}^{2^{n+1}}\frac{i(\alpha^j_n,\mu_{\varphi})^2}{\ell^j_n}=\infty .
$$
which contradicts Theorem \ref{thm:upper-bdd}. Thus $m_{n_0}=0$ for all $n_0$ and the $|\varphi |$-measure of escaping horizontal trajectories is zero.
Since $\varphi$ is an arbitrary integrable holomorphic quadratic differential, we conclude that $X$ is parabolic.
\end{proof}

We prove a lemma that will help us in providing a converse to the above theorem.

\begin{lem}
\label{lem:trapezoid}
Let $Q$ be a right-angled trapezoid in the complex plane with bases $a$ and $b$ and the height $h$ that is also one side of $Q$. Assume that $1\leq |b|/|a|\leq C$ and $|h|/|b|\geq C$ for some $C\geq 1$, where $|a|$, $|b|$ and $|h|$ are the Euclidean lengths of the sides.

Consider a foliation of $Q$ whose leaves are Euclidean straight lines connecting $a$ and $b$, and the correspondence between $a$ and $b$ induced by the leaves is a linear (stretch) map for the Euclidean metric.
Then there exists a differentiable function $v:Q\to\mathbb{R}$ such that $v^{-1}(const)$ are the above Euclidean arcs and
$$
\iint|\nabla v|^2dxdy\leq M\cdot |h|\cdot |a|,
$$
where $M=M(C)$ depends only on $C$.
\end{lem}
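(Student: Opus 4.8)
The plan is to put $Q$ in standard position, write the foliation explicitly in coordinates, and take for $v$ a rescaled leaf parameter; the estimate then reduces to a one-dimensional integral controlled by the hypotheses. First I would place the right angle at the origin, with the base $a$ on $\{(t,0):0\le t\le|a|\}$, the height $h$ on $\{(0,y):0\le y\le|h|\}$, and the base $b$ on $\{(t,|h|):0\le t\le|b|\}$, the slanted side joining $(|a|,0)$ to $(|b|,|h|)$. Parametrizing each base by a normalized parameter $s\in[0,1]$, the linear stretch correspondence sends $s\mapsto s$, so the leaf of parameter $s$ is the segment from $(s|a|,0)$ to $(s|b|,|h|)$. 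With
$$
w(y)=|a|+\frac{|b|-|a|}{|h|}\,y
$$
the width of $Q$ at height $y$, the point of leaf $s$ at height $y$ is $(s\,w(y),y)$, so the leaf through $(x,y)$ has parameter $s=x/w(y)$. I would then set
$$
v(x,y)=|a|\,\frac{x}{w(y)},
$$
which is smooth on $Q$ (since $w(y)\ge|a|>0$) and whose level sets are exactly the prescribed Euclidean segments.

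Next I would compute the Dirichlet integrand. Writing $w'=(|b|-|a|)/|h|$ for the constant slope, one has $\partial_x v=|a|/w(y)$ and $\partial_y v=-|a|\,x\,w'/w(y)^2$, so
$$
|\nabla v|^2=\frac{|a|^2}{w(y)^2}+\frac{|a|^2 (w')^2 x^2}{w(y)^4}.
$$
Integrating first in $x$ over $[0,w(y)]$ and then in $y$ over $[0,|h|]$ collapses the double integral to
$$
\iint_Q|\nabla v|^2\,dx\,dy=|a|^2\Big(1+\tfrac{1}{3}(w')^2\Big)\int_0^{|h|}\frac{dy}{w(y)}.
$$

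The final step is to bound the two factors from the hypotheses. Since $|b|\ge|a|$, I get $w'\ge0$ and hence $w(y)\ge|a|$, so $\int_0^{|h|}dy/w(y)\le|h|/|a|$. For the slope, $|b|-|a|\le(C-1)|a|$ together with $|h|\ge C|b|\ge C|a|$ gives $w'\le(C-1)/C<1$, whence $1+\tfrac13(w')^2\le\tfrac43$. Combining,
$$
\iint_Q|\nabla v|^2\,dx\,dy\le|a|^2\cdot\tfrac43\cdot\frac{|h|}{|a|}=\tfrac43\,|h|\,|a|,
$$
so the lemma holds with $M=\tfrac43$, in fact independent of $C$. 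I expect the only real subtlety to be the geometric role of the assumption $|h|/|b|\ge C$: it keeps the leaves nearly vertical, so that the $\partial_y v$ term produced by the varying width $w(y)$ stays under control, while the normalization factor $|a|$ in $v$ is exactly what turns the scale-invariant quantity $\int_0^{|h|}dy/w(y)\le|h|/|a|$ into the desired product $|h|\,|a|$.
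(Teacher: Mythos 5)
Your proof is correct and takes essentially the same route as the paper: you construct the same explicit leaf-parameter function $v=|a|\,x/w(y)$ (the paper's formula for $v$ in the trapezoid, with the roles of the two coordinates interchanged) and bound its Dirichlet integral by a direct gradient computation. The only difference is that you evaluate the integral exactly rather than using the paper's pointwise bound $|\nabla v|^2\leq \frac{(C-1)^2}{C^2}+1$ multiplied by the area of $Q$, which gives you the slightly cleaner constant $M=\tfrac{4}{3}$ independent of $C$.
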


\begin{proof}
Place $Q\subset\mathbb{C}$ such that the height $h$ is the segment $[0,|h|]\subset \mathbb{R}$, the base $a$ is the segment $[0,i|a|]$ of the $y$-axis and the base $b$ is the vertical segment connecting $|h|$ to $|h|+i|b|$.  
Let $x+iy\in Q$. To define the desired function, we first find the segment through $x+iy$ such that its endpoint on $a$
is $is|a|$ and its endpoint on $b$ is $|h|+is|b|$, where $s\in [0,1]$ (see Figure 6). We need to find $s$ and note that this choice guarantees that the correspondence between the bases $a$ and $b$ under the foliation leaves is linear.

\begin{figure}[h]
\leavevmode 
\SetLabels
\L(.2*.25) $is|a|$\\
\L(.47*.33) $x+iy$\\
\L(.76*.53) $|h|+is|b|$\\
\L(.25*.05) $0$\\
\L(.76*.05) $|h|$\\
\L(.22*.55) $i|a|$\\
\L(.745*.95) $|h|+i|b|$\\
\endSetLabels
\begin{center}
\AffixLabels{\centerline{\epsfig{file =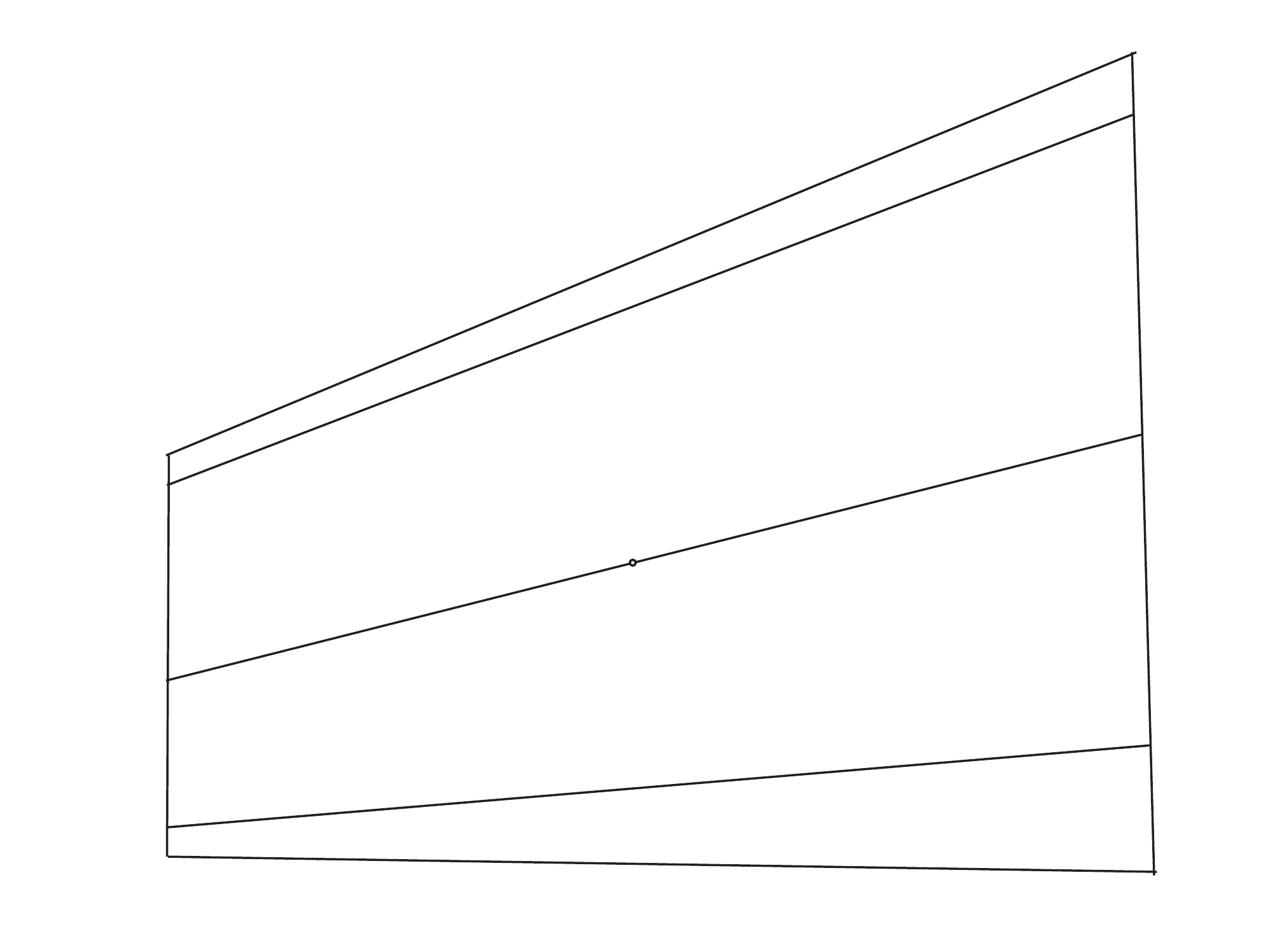,width=8.0cm,angle=0} }}
\vspace{-20pt}
\end{center}
\caption{The foliation in the trapezoid.} 
\end{figure}

To find $s$, note that the vector $x+iy$ is a convex combination of $is|a|$ and $|h|+is|b|$ (see Figure 6). Therefore there exists $t\in [0,1]$ such that
$$
x+iy=its|a|+(1-t)(|h|+is|b|).
$$ 
From the real part of the above equation, we get
$$
t=1-\frac{x}{|h|}.
$$
Combining this with the imaginary part, we obtain
$$
y=(1-\frac{x}{|h|})s|a|+\frac{x}{|h|}s|b|
$$
and solving for $s$ we obtain
$$
s=\frac{y}{(1-\frac{x}{|h|})|a|+\frac{x}{|h|}|b|}.
$$
We define $v:Q\to\mathbb{R}$ by setting it to be constant along the above segments by
$$
v(x+iy)=s|a|
$$ 
where $s$ is a function of $x+iy$. By the above considerations, we get
\begin{equation}
\label{eq:fol-trapezoid}
v(x+iy)=\frac{|a|y}{(1-\frac{x}{|h|})|a|+\frac{x}{|h|}|b|}.
\end{equation}

A direct computation from (\ref{eq:fol-trapezoid}) and an estimation gives
$$
|\nabla v|^2\leq M_1(C)=\frac{(C-1)^2}{C^2}+1.
$$
Therefore
$$
\iint_Q|\nabla v|^2dxdy\leq M\cdot |h|\cdot |a|
$$
for $M=M(C)=M_1C$.
\end{proof}

We can complement the result in Theorem \ref{thm:BHS} by finding a condition that guarantees the Riemann surface $X$ is not parabolic. This is the main result in this section, as methods in \cite{BHS} do not allow us to detect non-parabolic Riemann surfaces unless they contain a funnel or a half-plane. In the case of the geodesic pants decomposition, which is upper bounded, the surface does not have half-planes (and funnels by our definition). 

\begin{thm}
\label{thm:cantor}
Let $X$ be the complement of the Cantor set on the Riemann sphere and $\{ X_n\}_n$ the compact exhaustion as above. Let $\ell_n^j$ be the length of the geodesic $\alpha^j_n$ on the Riemann surface $X$.
If there exists $r>2$ such that, for all $1\leq j\leq 2^{n+1}$ and $n\geq 1$,
$$
\ell_n^j= \frac{(n+1)^r}{2^{n+1}}
$$
then $X$ is not parabolic.
\end{thm}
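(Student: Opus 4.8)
The plan is to construct explicitly a measured lamination $\mu\in ML_{\mathrm{int}}(X)$ a positive proportion of whose mass is carried by geodesics that leave every compact subset of $X$. By Theorem~\ref{thm:non-parabolic-criteria} (or, equivalently, by Theorem~\ref{thm:int-non-par-cross-cut} combined with the realization statement of Theorem~\ref{thm:main}) the existence of such a $\mu$ is exactly what is needed to conclude $X\notin O_G$. Concretely, I would exhibit an integrable proper partial foliation $\mathcal{F}$ whose leaves are cross-cuts running out through the ends of the Cantor tree, verify $D(\mathcal{F})<\infty$, and then let Theorem~\ref{thm:main} produce the differential $\varphi\in A(X)$ with $\mu_\varphi=\mu_{\mathcal{F}}$.

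To build $\mathcal{F}$, I fix a total transverse measure $M>0$ and route leaves outward through the binary tree of pants, splitting the measure evenly at each pair of pants. Since the mass of a family of escaping leaves is preserved as it passes from a parent cuff to its two children, the symmetry of the construction forces the escaping leaves to cross the level-$n$ cuff $\alpha_n^j$ with transverse measure
\[
i(\alpha_n^j,\mu)=i_n:=\frac{M}{2^{n+1}},
\]
and I would spread them with (slowly varying) uniform density $i_n/\ell_n^j$ along each cuff. Following any leaf outward, it meets an increasing nested sequence of pants converging to a single point of the Cantor set, so its two ends limit to two distinct ideal points; hence every leaf is a cross-cut and $\mathcal{F}$ is proper, with positive escaping mass $M$.

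The analytic heart is the estimate $D(\mathcal{F})<\infty$, and here I would use Lemma~\ref{lem:trapezoid}. In each pair of pants I decompose the region joining the parent cuff to the two children into trapezoids and define the local functions $v$ by the linear-stretch foliation of the lemma; its bases correspond to the two cuffs (whose transverse measures are $i_n$ and $i_{n+1}=i_n/2$, so $1\le|b|/|a|\le C$ because $\ell_{n+1}/\ell_n\to\tfrac12$), while its height is comparable to the width $w_n$ of the standard collar of $\alpha_n^j$. Since the cuffs are short, $w_n\sim\log(1/\ell_n^j)\sim n$ is large compared with $i_n$, so the hypothesis $|h|/|b|\ge C$ holds uniformly. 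Lemma~\ref{lem:trapezoid} then bounds the contribution of the collar of $\alpha_n^j$ by a multiple of $i_n^2\,w_n/\ell_n^j$, and summing over the $2^{n+1}$ cuffs at each level gives
\[
D(\mathcal{F})\lesssim \sum_{n}2^{n+1}\,\frac{i_n^2\,\log(1/\ell_n)}{\ell_n}
\asymp \sum_n 2^{n+1}\,\frac{2^{-2(n+1)}\,n}{n^{r}\,2^{-(n+1)}}
=\sum_n M^2\,n^{\,1-r},
\]
which converges precisely when $r>2$. For such $r$ the foliation $\mathcal{F}$ is integrable, Theorem~\ref{thm:main} yields $\varphi\in A(X)$ realizing $\mu$, and Theorem~\ref{thm:non-parabolic-criteria} gives $X\notin O_G$.

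I expect the main obstacle to be precisely this Dirichlet estimate: organizing the trapezoidal decomposition of each pair of pants so that the pieces glue consistently across the branching and the density $i_n/\ell_n$ matches (up to the factor $\le C$ allowed by Lemma~\ref{lem:trapezoid}) between consecutive levels, and pinning down the geometric scales that enter the bound. The decisive point is the appearance of the collar width $w_n\sim\log(1/\ell_n)\sim n$ as an extra factor: the necessary condition of Theorem~\ref{thm:upper-bdd} for this surface reads $\sum_n M^2 n^{-r}<\infty$, i.e. $r>1$, whereas the explicit trapezoid construction pays this additional $w_n$ and only yields integrability for $r>2$. Closing the resulting gap $1<r\le 2$ is not attempted here.
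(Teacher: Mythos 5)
Your proposal follows essentially the same route as the paper: you split a fixed transverse mass evenly down the binary tree of pants so that the level-$n$ cuffs carry measure $i_n=2^{-(n+1)}M$, use Lemma~\ref{lem:trapezoid} to bound the Dirichlet integral of the resulting partial foliation by $\sum_n 2^{n+1}\,i_n^2\log(1/\ell_n)/\ell_n\asymp\sum_n n^{1-r}$, and invoke Theorem~\ref{thm:main} together with the cross-cut criterion to conclude $X\notin O_G$ when $r>2$. This is exactly the paper's argument --- the paper carries out the trapezoidal decomposition you defer, cutting each pair of pants along orthogeodesics into pentagons mapped to planar rectangles and trapezoids and rescaling the local functions level by level --- and it likewise leaves open the range $1<r\le 2$ suggested by Theorem~\ref{thm:upper-bdd}.
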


\begin{proof}
We construct a proper integrable partial foliation $\mathcal{F}$ on $X$. Since being parabolic is a quasiconformal invariant, by \cite[Theorem 1.1]{ALPS}, it is enough to consider $X$ with zero twists. Since $X$ has zero twists, the orthogeodesics between different boundary geodesics (cuffs) of the geodesic pants decomposition connect at the boundary geodesics to make geodesics on $X$ that escape to infinity at both endpoints. These infinite geodesics $\{g_k\}_k$ are mutually disjoint and orthogonal to each boundary geodesic (cuff) of the geodesic pants decomposition they intersect. We form a  partial foliation of $X$ with the help of the family $\{ g_k\}_k$ (see Figure 7). 

\begin{figure}[h]
\leavevmode 
\SetLabels
\L(.66*.9) $g_1$\\
\L(.52*.8) $g_2$\\
\L(.32*.72) $g_3$\\
\L(.65*.72) $g_4$\\
\endSetLabels
\begin{center}
\AffixLabels{\centerline{\epsfig{file =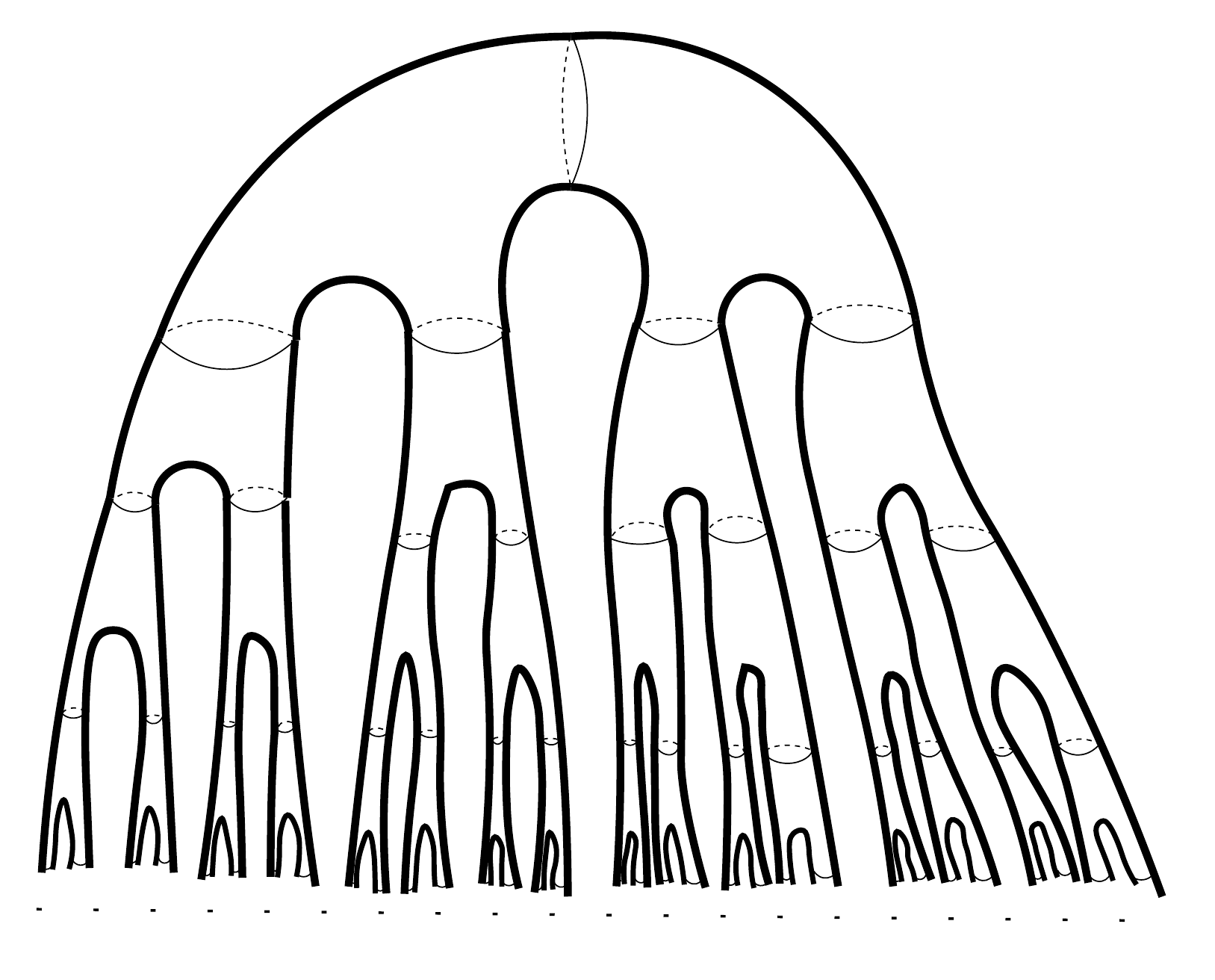,width=12.0cm,angle=0} }}
\vspace{-20pt}
\end{center}
\caption{The bold lines form family $\{ g_k\}_k$.} 
\end{figure}

Let $P$ be a pair of pants at the level $n\geq 2$ in the pants decomposition of $X$. Denote by $\alpha_1$ the boundary geodesic curve of $P$ that is in the interior of $X_{n}$ and by $\alpha_2$, $\alpha_3$ the boundary geodesic curves of $P$ that are also on the boundary of $X_{n}$. Then we have, for $n\geq 2$, $$\ell_X(\alpha_1)= \frac{n^r}{2^{n}},$$
 and $$\ell_X(\alpha_2)=\ell_X(\alpha_3)= \frac{(n+1)^r}{2^{n+1}}.$$ 
 This determines the lengths of all cuffs in Figure 7 except the top cuff. We set its length to be $1$.

Let $o_{i,j}$ be the orthogeodesic between $\alpha_i$ and $\alpha_j$, for $i\neq j$ and $i,j\in\{ 1,2,3\}$. Then $o_{1,2}\cup o_{1,3}\cup o_{2,3}$ divides $P$ into two right angled hexagons $\Sigma_1$ and $\Sigma_2$. 
The sides $o_{1,2}$ and $o_{1,3}$ have equal lengths $\ell_X(o_{1,2})=\ell_X(o_{1,3})$. In each hexagon $\Sigma_k$, draw the orthogeodesic $a_{i,k}$ between the half of $\alpha_1$ that is on the boundary of $\Sigma_k$ and $o_{2,3}$. The orthogeodesic $a_{i,k}$ divides the right-angled hexagon into two right-angled pentagons (see Figure 8).  
To shorten the notation, let $\ell_n=
\frac{(n+1)^r}{2^{n+1}}$. Note that  $\ell_n\to 0$ and $\ell_{n}/\ell_{n+1}\to {2}$ as $n\to\infty$, and that $\ell_n$ is decreasing in $n$. If $A$ and $B$ are two quantities that go to infinity, then 
$A\asymp B$ means that $A/B$ is between two positive constants. 

\begin{figure}[h]
\leavevmode 
\SetLabels
\L(.53*.9) $\alpha_1$\\
\L(.54*.5) $a_{i,k}$\\
\L(.33*.6) $o_{1,2}$\\
\L(.67*.6) $o_{1,3}$\\
\L(.5*.1) $o_{2,3}$\\
\L(.3*.15) $\alpha_2$\\
\L(.72*.15) $\alpha_3$\\
\L(.22*.2) $A$\\
\L(.3*.45) $C$\\
\L(.43*.92) $B$\\
\endSetLabels
\begin{center}
\AffixLabels{\centerline{\epsfig{file =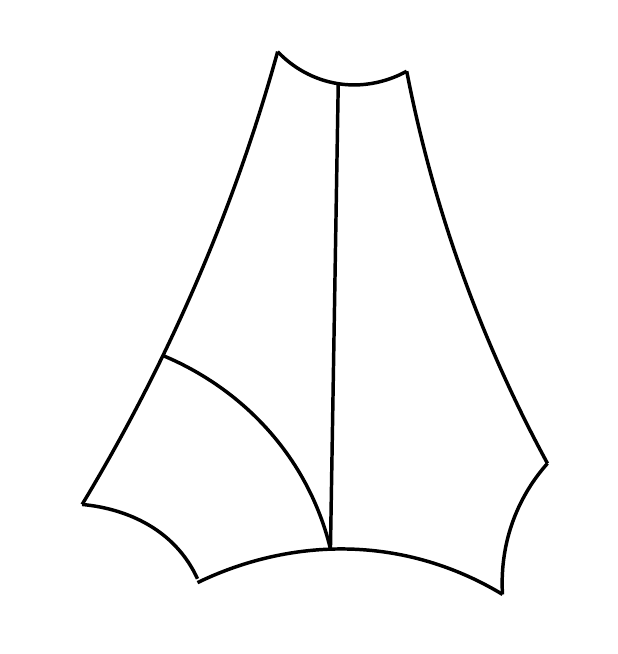,width=8.0cm,angle=0} }}
\vspace{-20pt}
\end{center}
\caption{The hexagon $\Sigma_k$ and the two pentagons obtained by the orthogonal $a_{i,k}$.} 
\end{figure}

The hyperbolic trigonometry identities in the hexagon (see Figure 8) give
\begin{equation}
\label{eq:pent-eqns}
\begin{array}l
\cosh \ell_X(o_{1,2})=\frac{1}{\tanh\frac{\ell_{n}}{4}\tanh\frac{\ell_{n+1}}{2}},\\
\sinh \ell_X(o_{1,2})\sinh\frac{\ell_{n}}{4}=\cosh \ell_X(a_{i,k}),\\
\cosh \frac{\ell_X(o_{2,3})}{2}=\frac{1}{\tanh \ell_{X}(a_{i,k})\tanh\frac{\ell_{n+1}}{2}}.
\end{array}
\end{equation}
Since $\ell_{n+1}/\ell_n\to 1/2$ as $n\to\infty$, by the equations (\ref{eq:pent-eqns}) we get
\begin{equation}
\label{eq:asympt-lengths}
\begin{array}l
\ell_X(o_{1,2})=\ell_X(o_{1,3})\asymp \log\frac{1}{\ell_n},\\
\ell_X(o_{2,3})\asymp \log\frac{1}{\ell_n},\\
\ell_X(a_{i,k})\asymp \log\frac{1}{\ell_n}.
\end{array}
\end{equation}

We map the pentagon $\Omega_{i,k}$ on the left side of Figure 8 by a diffeomorphism $f$ to a subset of $\mathbb{C}$ as follows. First map the geodesic arc $o_{1,2}$ by an isometry to the interval $[0,\ell_X(o_{1,2})]$ of the real axis.  Each point $w$ in the pentagon belongs to a unique hyperbolic geodesic arc $\gamma_{w_0}$  orthogonal to $o_{1,2}$ with foot $w_0$ on $o_{1,2}$. Map the geodesic arc $\gamma_{w_0}$ to a Euclidean segment orthogonal to $[0,\ell_X(o_{1,2})]$ by an isometry. This defines the map $f$ from the pentagon to a domain in $\mathbb{C}$ which is an isometry on the sides $o_{1,2}$, $\alpha_1$ and $\alpha_2$, and it is a diffeomorphism on the whole pentagon (see Figure 9). The parts of the boundary $f(o_{2,3})$ and $f(a_{i,k})$ are not Euclidean segments. The distance of each point on $a_{i,k}$ to the side $o_{1,2}$ is greater than the length of the side on $\alpha_1$ and the distance of each point on the side $o_{2,3}$ to the side $o_{1,2}$ is greater than the length of the side on $\alpha_2$. Therefore the distance of the point $a_{i,k}\cap o_{2,3}$  to the side $o_{1,2}$ is greater than both sides on $\alpha_1$ and on $\alpha_2$. 

\begin{figure}[h]
\leavevmode 
\SetLabels
\L(.19*.19) $\frac{\ell_n}{4}$\\
\L(.68*.3) $\frac{\ell_{n+1}}{2}$\\
\L(.41*.05) $t$\\
\L(.64*.05) $\ell_X(o_{1,2})$\\
\L(.3*.2) $R_1$\\
\L(.48*.4) $R_2$\\
\L(.55*.2) $Q_2$\\
\endSetLabels
\begin{center}
\AffixLabels{\centerline{\epsfig{file =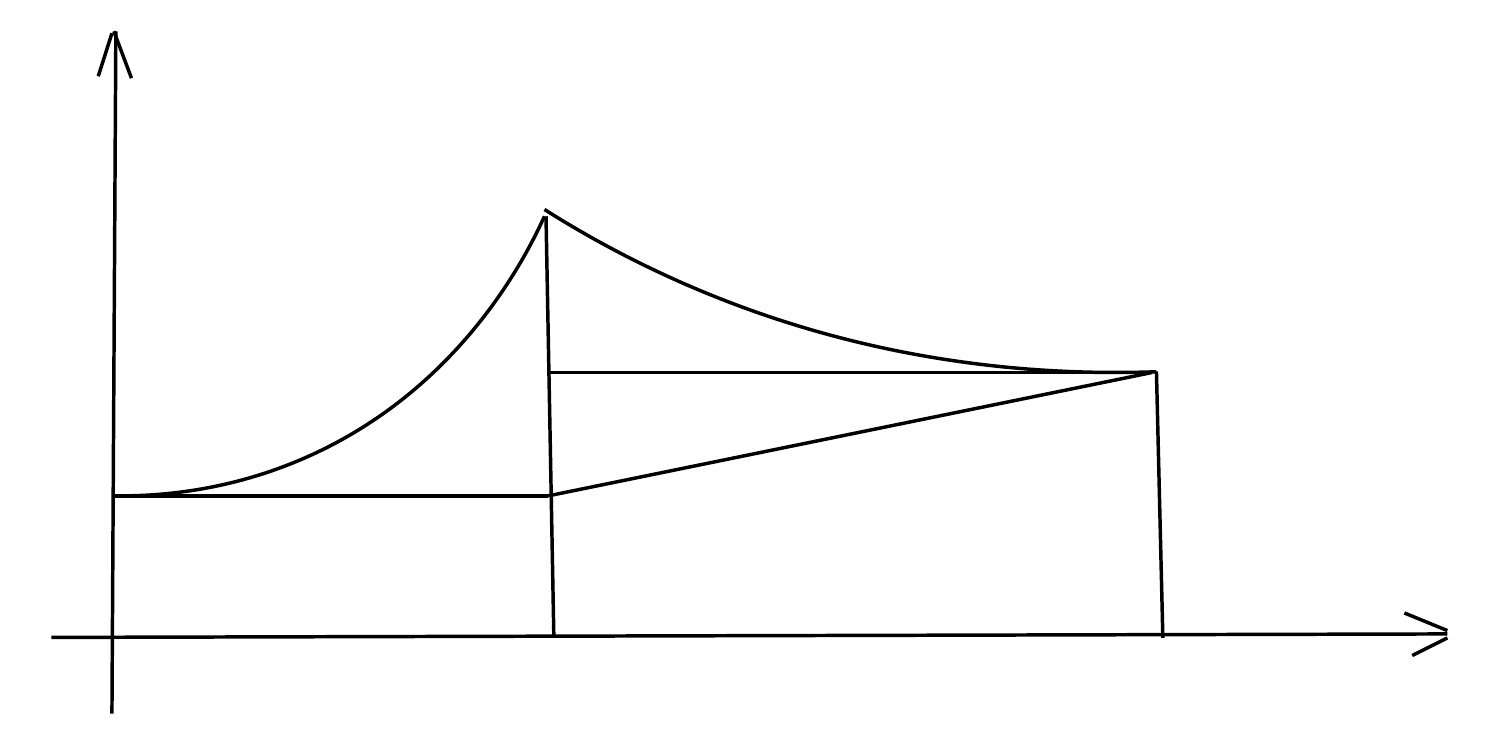,width=8.0cm,angle=0} }}
\vspace{-20pt}
\end{center}
\caption{The image $f(\Omega_{1,k})$.} 
\end{figure}

Let $t\in (0,\ell_X(o_{1,2}))$ be the foot of the orthogonal to the real axis from the intersection point $f(a_{i,k})\cap f(o_{2,3})$ which is one vertex of $f(\Omega_{i,k})$. Then $f(\Omega_{i,k})$ contains Euclidean rectangles $R_1:=[0,t]\times [0,\frac{1}{4}\ell_n]$ and $R_2:=[t,\ell_X(o_{1,2})]\times [0,\frac{1}{2}\ell_{n+1}]$ (see Figure 9). Notice that the second rectangle has a bigger height. Since the heights of both rectangles are bounded above, the restriction to $f^{-1}(R_1\cup R_2)$ of the differentiable function $f$ has a  derivative bounded between two positive constants for all $n$. Therefore the diffeomorphism $f$ is a quasiconformal map on $f^{-1}(R_1\cup R_2)$ with the quasiconformal constant bounded above independently in $n$.

We construct a foliation in $R_1$ by taking $v_1(x+iy)=y$. The leaves are horizontal lines and the Dirichlet integral is $\mathcal{D}_{R_1}(v_1)\asymp\frac{1}{4}\ell_n t\leq \frac{1}{4}\ell_n\log\frac{1}{\ell_n}.$ The transverse measure in $R_1$ is equal to $\frac{1}{4}\ell_n$. 

In order to continue the foliation of $R_1$ to a partial foliation on $R_2$ using Lemma \ref{lem:trapezoid}, we first need to prove that $\ell_X(o_{1,2})-t\geq \ell_{n+1}$. Consider the two hyperbolic quadrilateral obtained from the pentagon $\Omega_{1,k}$ by drawing an orthogonal to the side $o_{1,2}$ from the vertex $o_{2,3}\cap a_{1,k}$ (see Figure 8). 
Note that the length of the side $AC$ is $\ell_X(o_{1,2})-t$ and the length of the side $CB$ is $t$ (see Figure 8).  Using a trigonometric identity for quadrilaterals with three right angles (see \cite[Theorem 2.3.1]{Buser}), we get
$$
\cosh t=\tanh b\coth \frac{\ell_n}{4}\ \mathrm{and}\ \cosh (\ell_X(o_{1,2})-t)=\tanh b\coth \frac{\ell_{n+1}}{2}
$$
where $b$ is the length of the orthogonal arc to $o_{1,2}$ as in Figure 8.

By dividing the first equality with the second we get $\cosh t /[\cosh (\ell_X(o_{1,2})-t)]=\coth \frac{\ell_n}{4}/\coth\frac{\ell_{n+1}}{2}$
Since $1<\ell_n/\ell_{n+1}<2$, $\ell_n\to 0$ and $\ell_X(o_{1,2})\to\infty$ as $n\to\infty$, it follows that $\ell_X(o_{1,2})-t\geq 1/2\geq \ell_{n+1}$ for $n$ large enough and we can apply Lemma \ref{lem:trapezoid}.

Let $Q_2$ be the trapezoid in $R_2$ whose height is $[t,\ell_X(o_{1,2})]$ and the two bases are $\{ t\}\times [0,\frac{1}{4}\ell_n]$ and $\{\ell_X(o_{1,2})\}\times [0,\frac{1}{2}\ell_{n+1}]$. We choose a foliation $v_2:Q_2\to\mathbb{R}$ such that it connects points on $\{ t\}\times [0,\frac{1}{4}\ell_n]$ to the points on $\{\ell_X(o_{1,2})\}\times [0,\frac{1}{2}\ell_{n+1}]$ by Euclidean segments in a way that the correspondence between the two bases is linear with respect to the Euclidean distance. By Lemma \ref{lem:trapezoid}, the choice of $v_2$ can be made such that the transverse measure in $Q_2$ is equal to $\frac{1}{4}\ell_n$ and $\mathcal{D}_{Q_2}(v_2)\asymp \ell_n\log\frac{1}{\ell_n}.$ Note that the foliations $v_1$ and $v_2$ have the same transverse measures on $\{ t\}\times [0,\frac{1}{2}\ell_n]$ and they define a foliation on $R_1\cup R_2$ with trasverse measure $\frac{1}{4}\ell_n$ and Dirichlet integral of the order $\ell_n\log\frac{1}{\ell_n}.$ 

We perform this construction in all pentagons and for all pairs of pants. The leaves of the partial foliations in the pairs of pants glue in such a fashion that they are continued indefinitely and converge to infinity at both ends. Each leaf is homotopic to a single geodesic from $\{ g_k\}_k$. The transverse measures on the boundary geodesics of the pairs of pants do not match. We scale the foliation to match the transverse measures on the boundary geodesics of the pants decomposition. The scaling is done by multiplying the functions $v:R_1\cup Q_2\to\mathbb{R}$ by an appropriate positive number.

We start from level $1$ where the first two pairs of pants $P_1$ and $P_1'$ are glued along the boundary geodesic $\alpha_1$ of length $\ell_1=2$. The transverse measure on $\alpha_1$ from both $P_1$ and $P_1'$ equals $\ell_1$. The trasverse measures on the other boundaries of $P_1$ and $P_1'$ are equal to $\frac{1}{2}\ell_1$. We multiply the foliation functions $v_1:P_1\to\mathbb{R}$ and $v_1':P_1'\to\mathbb{R}$ by $\frac{1}{\ell_1}$ and note that their Dirichlet integral is multiplied by $\frac{1}{\ell_1^2}$. Therefore the new partial foliation on $P_1\cup P_1'$ has transverse measure $1$ on $\alpha_1$ and equal transverse measures $\frac{1}{2}$ on the four boundary geodesics. The Dirichlet integral of the new partial foliation on $P_1\cup P_1'$ is finite.

Assume that we have defined a new partial foliation on the union of the pairs of pants of the level $n-1$ such that the transverse measure on each of the $2^n$ boundary geodesics is equal to $\frac{1}{2^n}$. On each pair of pants of the level $n$, which is attached along a boundary geodesic $\alpha_{n-1}^j$ to the level $n-1$, we have a partial foliation with transverse measure equal to $\ell_{n-1}$ on $\alpha_{n-1}^j$ and equal to $\ell_{n-1}/2$ on the other two pairs of pants. We scale the partial foliation on the pair of pants by $\frac{1}{2^n \ell_{n-1}}$ to obtain partial foliation whose transverse measure is $\frac{1}{2^n}$ on $\alpha_{n-1}^j$ and $\frac{1}{2^{n+1}}$ on the other two boundaries of the pair of pants on the level $n$. Continuing in this fashion, the partial foliations on the pairs of pants glue to a global partial foliation $\mathcal{F}$ of the Riemann surface $X$ whose each leaf leaves every compact subset of $X$ in both directions and such that the transverse measure of the boundary geodesic between level $n-1$ and level $n$ pairs of pants is $\frac{1}{2^n}$. Note that the transverse measures on the cuffs are proportional to the hyperbolic lengths, which allowed us to claim that the measures match as long as they have the same total mass.

It remains to compute the Dirichlet integral of $\mathcal{F}$. On the pair of pants on the level $n$, the Dirichlet integral is of the order $(\frac{1}{2^n\ell_n})^2\ell_n\log\frac{1}{\ell_n}$. By summing over all $2^n$ pairs of pants at the level $n$ and then over all $n$, we obtain that the Dirichlet integral of $\mathcal{F}$ over $X$ is of the order
$$
D_{P_1\cup P_1'}(\mathcal{F})+\sum_{n=2}^{\infty} 2^n(\frac{1}{2^n\ell_n})^2\ell_n\log\frac{1}{\ell_n}=D_{P_1\cup P_1'}(\mathcal{F})+\sum_{n=2}^{\infty}\frac{1}{2^n\ell_n}\log\frac{1}{\ell_n}.
$$
Since $\ell_n=\frac{(n+1)^r}{2^{n+1}}$, the Dirichlet integral of $\mathcal{F}$ is of the order
$$
\sum_{n=2}^{\infty} \frac{1}{(n+1)^r}((n+1)\log 2-r\log (n+1))\leq\sum_{n=2}^{\infty}\frac{1}{n^{r-1}}.
$$
By $r>2$ we conclude that $\mathcal{D}_X(\mathcal{F})<\infty$. Therefore by Theorem \ref{thm:main}, there exists a non-trivial integrable holomorphic quadratic differential $\varphi$ on $X$ whose all horizontal trajectories are cross-cuts. By Theorem \ref{thm:int-non-par-cross-cut}, the surface $X$ is not parabolic.
\end{proof}

\author{Dragomir \v Sari\' c, Department of Mathematics,  Graduate Center and Queens College, CUNY, Dragomir.Saric@qc.cuny.edu}

\end{document}